\numberwithin{equation}{section}
\newcommand{\beq}{\begin{equation}}
\newcommand{\eeq}{\end{equation}}
\newcommand{\beqs}{\begin{eqnarray*}}
\newcommand{\eeqs}{\end{eqnarray*}}
\newcommand{\beqn}{\begin{eqnarray}}
\newcommand{\eeqn}{\end{eqnarray}}
\newcommand{\beqa}{\begin{array}}
\newcommand{\eeqa}{\end{array}}
\newcommand{\R}{\mathbb R}
\newcommand{\tl}{\tilde\lambda}
\newcommand{\tL}{\tilde\Lambda}
\newcommand{\bb}{\text{\bf{b}}}
\newcommand{\pp}{\text{\bf{p}}}
\newcommand{\e}{\varepsilon}
\newcommand{\p}{\partial}
\newcommand{\eps}{\varepsilon}
\newtheorem{prop}{Proposition}[section]
\newtheorem{thm}[prop]{Theorem}
\newtheorem{lem}[prop]{Lemma}
\newtheorem{rem}[prop]{Remark}
\renewcommand{\div}{\mbox{div}\,}
\newcommand{\trace}{\mbox{trace}\,}
\newcommand{\dist}{\text{dist}}
\author{Nam Q. Le}
\address{Department of Mathematics, Indiana University, 
Bloomington, IN 47405, USA. }
\email {nqle@indiana.edu}
\thanks{The research of the  author was supported in part by NSF grant DMS-2054686.  }
\title[Twisted Harnack inequality and solvability of singular Abreu equations]{Twisted Harnack inequality and approximation of variational problems with a convexity constraint by singular Abreu equations}
\begin{document}
\subjclass[2020]{35J40, 35J35,  35J96, 35B65}
\keywords{Monge--Amp\`ere equation, linearized Monge--Amp\`ere equation, twisted Harnack inequality, singular Abreu equation, Rochet--Chon\'e model}
\begin{abstract}
 We show in all dimensions that minimizers of variational problems with a convexity constraint,  which arise from the Rochet--Chon\'e model with a quadratic cost in the monopolist's problem in economics, can be approximated in the uniform norm by solutions of singular Abreu equations.
 The difficulty of our Abreu equations consists of having singularities that occur only in a proper subdomain and they cannot be completely removed by any transformations. To solve them, we rely on a new tool which we establish here: a Harnack inequality for singular linearized Monge--Amp\`ere type equations that satisfy certain twisted conditions.
\end{abstract}

\maketitle

\section{Introduction and statement of the main results}
In this paper, we show that 
minimizers of variational problems with a convexity constraint,  which arise from the Rochet--Chon\'e model with a quadratic cost in the monopolist's problem in economics, 
  can be approximated in the uniform norm by solutions of singular Abreu equations. We accomplish this in all dimensions by establishing the solvability of singular Abreu equations in suitable Sobolev spaces, thus resolving an open issue raised in \cite[Remark 1.9]{LePRS}.  A key ingredient in our analysis is a Harnack inequality for solutions of singular linearized Monge--Amp\`ere type equations satisfying certain twisted conditions. 
\subsection{Approximating minimizers of the Rochet--Chon\'e model by singular Abreu equations}
Let $\Omega_0$, $\Omega$ be bounded, open, smooth, and convex domains in $\R^n$ $(n\geq 2)$ where $\Omega$ contains $\overline{\Omega_0}$.
Let $F(x, z, {\textbf p}):\R^n\times \R\times \R^n\rightarrow\R$ be a smooth Lagrangian 
which is convex in each of the variables $z\in\R$ and $ {\textbf p}\in\R^n$. Let $\varphi\in C^{5}(\overline{\Omega})$ be a convex function.

Consider the following variational problem subject to a convexity constraint:
\begin{equation}
\label{minF2}
\inf_{u\in \bar{S}[\varphi,\Omega_0]} \int_{\Omega_0} F(x, u(x), Du(x)) \,dx
\end{equation}
where 
\begin{multline}
\label{barS2}
\bar{S}[\varphi, \Omega_0]=\Big\{ u: \Omega_0\rightarrow \R\mid u \text{ is convex and admits a convex extension to\ } \Omega \\ \text{\ such that }   u=\varphi\text{ on }\Omega\backslash \Omega_0\Big\}.
\end{multline}
This type of problem arises in the analysis of several variational problems including: wrinkling patterns in floating elastic shells in Elasticity \cite{T}, and the Rochet-Chon\'e model \cite{RC} in the monopolist's problem in Economics. A typical Lagrangian for the case of $q$-power cost $(1<q<\infty)$ in the Rochet-Chon\'e model is
\begin{equation}
\label{Fqcost}
F(x, z, {\textbf p}) = (|{\textbf p}|^q/q- x\cdot {\textbf p}+ z)\eta_0(x)
\end{equation}
where $\eta_0$ is the nonnegative relative frequency of agents in the population; see \cite[p. 790]{RC}. 

Because the functions in  $\bar{S}[\varphi, \Omega_0]$ are Lipschitz continuous with Lipschitz constants bounded from above by $\|D\varphi\|_{L^{\infty}(\Omega)}$, one can show, under quite general convexity  and growth assumptions on $F$,  that (\ref{minF2}) has a minimizer in $\bar{S}[\varphi, \Omega_0]$.
Since it is in general difficult to handle the convexity constraint, especially in numerical computations \cite{BCMO, Mir}, one wonders if minimizers of (\ref{minF2}) can be well-approximated in the uniform norm by solutions of some higher order equations whose global well-posedness can be established. 
For the case of Lagrangians $F$ being independent of the gradient variable ${\textbf p}$, this approximation question has been answered in the affirmative by Carlier--Radice \cite{CR} in all dimensions. 
When the Lagrangians $F$ depend on the gradient variable ${\textbf p}$, this has been done \cite{LeCPAM, LePRS, LZ} in two dimensions for a large class of Lagrangians $F$ which include the Rochet--Chon\'e model (\ref{Fqcost}).
The approximating schemes proposed in \cite{CR, LeCPAM, LePRS, LZ} use the second boundary value problem of fourth order equations of Abreu type \cite{Ab}.  The Abreu equation 
\[\sum_{i, j=1}^n U^{ij}\frac{\p^2 [(\det D^2 u)^{-1}]}{\p x_i \p x_j}= f, \quad \text{where } (U^{ij}) = (\det D^2 u) (D^2 u)^{-1},\]
arises in complex geometry in the problem of finding K\"ahler metrics of constant scalar curvature for toric manifolds.   

In our approximating scheme (see \eqref{sAe2} and $J_\e$), Abreu-type equations arise from  the variations of functionals involving
the penalizations $-\e\int_\Omega \log \det D^2u \,dx$ which are convex functionals in the class of $C^2$, strictly convex functions. Heuristically, the logarithm of the Hessian determinant should act as a good barrier for the convexity constraint
in problems like \eqref{minF2}--\eqref{barS2}. This was confirmed numerically in \cite{BCMO} at a discretized level.

In the present work, we address the question of approximating minimizers of (\ref{minF2}) in all dimensions for the gradient-dependent Lagrangians. In particular, our result implies that  
minimizers of the Rochet--Chon\'e model with a quadratic cost can be approximated in the uniform norm by solutions of singular Abreu equations.

Motivated by the Rochet--Chon\'e model (\ref{Fqcost}), and as in \cite{LePRS}, we will focus on Lagrangians of the form
\begin{equation}
\label{F2cost}
F(x, z, {\textbf p}) =  F^0(x, z) + F^1(x, {\textbf p}).
\end{equation}
Here, the function $F^0$ satisfies the following convexity and growth assumptions:
\begin{equation}
\label{F02}
\small
\big( \frac{\p F^0}{\p z}(x, z)- \frac{\p F^0}{\p z}(x, \tilde z)\big)(z-\tilde z)\geq 0;\ \   |F^0(x, z)|+ \big|\frac{\p F^0}{\p z}(x, z)\big|\leq \eta (|z|)~\text{for all } x\in\Omega_0~\text{and } z,\tilde z\in\R
\end{equation}
for some continuous and increasing function $\eta:[0,\infty)\rightarrow [0,\infty)$; the function $F^1$ satisfies the  following convexity and growth assumptions for all ${\textbf p}= (p_1, \cdots, p_n)\in \R^n$: 
\begin{equation}
\label{F1C}
0\leq F^1_{p_i p_j}(x, \pp) \leq D_\ast I_n; |F^1_{p_i x_i} (x, \pp)| \leq D_\ast (|\pp| + 1)\quad\text{for all } x\in\Omega_0, \text{ and for each } i,
\end{equation}
where $I_n$ is the identity $n\times n$ matrix, and $D_\ast$ is a given positive constant. 

In this case, (\ref{minF2}) has a minimizer in $\bar S[\varphi,\Omega_0]$.  The minimizer is unique in the case $F^1_{p_i p_j}(x, \pp)\geq C^{-1} I_n$ for some constant $C>0$.
To approximate minimizers of (\ref{minF2}), we will use
 the following scheme proposed in \cite{LePRS} that is well suited to handling the general case of non-uniform convexity (if any) of the Lagrangian $F$ and the constraint barrier $\varphi$.

Let $\psi\in C^{3}(\overline{\Omega})$ with $\min_{\p\Omega}\psi>0$. Let $\rho$ be a uniformly convex defining function of $\Omega$, that is, 
\begin{equation}
\Omega:=\{x\in \R^n: \rho(x)<0\},~\rho=0 \text{ on } \p\Omega \text{ and }D\rho\neq 0 \text{ on }\p\Omega.
\end{equation} 
Denote by $\chi_{E}$ the characteristic function of a subset $E\subset \R^n$, that is $\chi_E(x) =1$ if $x\in E$ and $\chi_E(x) =0$ if $x\not\in E$.

 For $\e>0$, let \begin{equation}
 \label{mue2}
 \mu_\e(x) =\varphi(x) + \e^{\frac{1}{3n^2}} (e^{\rho(x)}-1), \end{equation}
 and 
 consider the following second boundary value problem for a uniform convex function $u_\e$:
\begin{equation}
\small
\label{sAe2}
  \left\{ 
  \begin{alignedat}{2}\e\sum_{i, j=1}^{n}U_\e^{ij}D_{ij}w_\e~& =f_{\e}\\\ &:=\Big(  \frac{\p F^0}{\p z}(x, u_\e) -\sum_{i=1}^n \frac{\p}{\p x_i} \big(\frac{\p F^1}{\p p_i} (x, Du_\e)\big)\Big)\chi_{\Omega_0} + \frac{u_\e-\mu_\e}{\e}\chi_{\Omega\backslash \Omega_0}~&&\text{in} ~\Omega, \\\
 w_\e~&= (\det D^2 u_\e)^{-1}~&&\text{in}~ \Omega,\\\
u_\e ~&=\varphi~&&\text{on}~\p \Omega,\\\
w_\e ~&= \psi~&&\text{on}~\p \Omega,
\end{alignedat}
\right.
\end{equation}
where \[(U^{ij}_\e)= (\det D^2 u_\e) (D^2 u_\e)^{-1}\] is the cofactor matrix of the Hessian matrix $D^2 u_\e$.

Note that the first two equations of (\ref{sAe2}) consist of a Monge--Amp\`ere equation for $u_\e$:
\begin{equation}
\label{detuwe}
\det D^2 u_\e = w_\e^{-1}\quad\text{in }\Omega, \end{equation}
and a linearized Monge--Amp\`ere equation for $w_\e$:
\begin{equation}
\label{Uwfe}
\sum_{i, j=1}^{n}U_\e^{ij}D_{ij}w_\e=f_\e/\e \quad\text{in }\Omega, \end{equation}
because $U^{ij}_\e D_{ij}$ comes from linearizing the Monge--Amp\`ere operator $\det D^2 u_\e$.  The last two equations of (\ref{sAe2}) prescribe the boundary values for $u_\e$ and $\det D^2 u_\e$ (via $w_\e$). 
Moreover, the first two equations of (\ref{sAe2})
 are critical points, with respect to compactly supported variations,  of  the following convex functional 
\begin{equation*}
J_{\e}(v)=\int_{\Omega_0}  F(x, v(x), Dv(x))\,dx +\frac{1}{2\e}\int_{\Omega\setminus\Omega_0} (v-\mu_\e)^2 \,dx-\e\int_{\Omega}  \log\det D^2 v \, dx.
\end{equation*}
For $u_\e$ being merely convex,  the right-hand side of (\ref{sAe2}) contains the term
\begin{equation}\mathcal{S}u_\e:=-\sum_{i, j=1}^n \frac{\p^2 F^1}{\p p_i \p p_j} (x, Du_\e(x)) D_{ij} u_\e
\end{equation} which is just a nonnegative measure. For this reason, we call (\ref{sAe2}) a {\it singular Abreu equation}. 

The ultimate goal is to solve (\ref{sAe2}) for all small $\e>0$, and then prove that, up to extracting a subsequence of $\e\rightarrow 0$, solutions $u_\e$ tend to  a minimizer $u\in \bar{S}[\varphi,\Omega_0]$ of (\ref{minF2}). This turns out to be highly challenging. One of the mains obstacles comes from the lack of regularity estimates for the 
linearized Monge--Amp\`ere equation (\ref{Uwfe}) with a very singular right-hand side $f_\e$.

As explained in \cite[$\S$1.3]{LeCPAM} and \cite[$\S$1.1]{LZ}, if one directly uses available results for the linearized Monge--Amp\`ere equation, one can only solve (\ref{sAe2}) in two dimensions. Section \ref{HI_stat} will briefly mention this issue. On the other hand, if the singular term $\mathcal{S} u_\e$ is exactly $-\Delta u_\e$ (or a constant multiple thereof; $\mathcal{S}u_\e=-\Delta u_\e$ corresponds to $F^1(x, \pp)=|\pp|^2/2$) and 
$-\Delta u_\e$ appears in the whole domain $\Omega$, rather than just in the subdomain $\Omega_0$, then the recent work
\cite{KLWZ} provides a new technique to solve (\ref{sAe2}) in all dimensions $n\geq 2$. The reason is that when $-\Delta u_\e$ appears in the whole domain $\Omega$, by using positive H\"older continuous functions, one can transform the first two equations of (\ref{sAe2}) into a linearized Monge--Amp\`ere equation with a bounded drift, and the singular term goes away. 
Then one can use H\"older estimates for linearized Monge--Amp\`ere equation with a bounded drift, and the higher order regularity estimates will eventually follow.
This is not the case when  $-\Delta u_\e$ appears only 
in the subdomain $\Omega_0$, since, to the best of the author's knowledge,  it cannot be completely removed by any transformations. In additions, our situation here is more challenging as $F^1$ may depend on the spatial variable $x$, and thus $
\mathcal{S} u_\e$ is not a positive constant multiple of $-\Delta u_\e$ in general.

In this paper, we introduce a new technique to solve (\ref{sAe2}) in all dimensions $n\geq 2$ by overcoming the difficulties mentioned above. The key technical device for obtaining regularity estimates for the 
linearized Monge--Amp\`ere equation (\ref{Uwfe}) with a very singular right-hand side $f_\e$ is a Harnack inequality in Theorem \ref{HI_thm} for singular linearized Monge--Amp\`ere type equations that satisfy certain twisted conditions. This allows us to solve (\ref{sAe2}) in all dimensions $n\geq 2$.

Our key observation here is that, granted positive lower and upper bounds for $\det D^2 u_\e$ have been established, 
\begin{enumerate}
\item $w_\e$ is a supersolution of the linearized Monge--Amp\`ere operator $U_\e^{ij} D_{ij}$ with bounded right-hand side.
\item (Twisted conditions) Up to multiplicative positive H\"older continuous functions,   $w_\e$ is a subsolution of the linearized Monge--Amp\`ere operator $U_\e^{ij} D_{ij}$ with bounded drift and bounded right-hand side. 
\end{enumerate}

Our first main theorem states as follows.
\begin{thm}[Solvability and convergence of singular Abreu equations to minimizers of variational problems with a convexity constraint]
\label{RCthm}
Let $\Omega_0$ and $\Omega$ be bounded, smooth, and convex domains in $\R^n$ ($n\geq 2$) where $\Omega$ is uniformly convex and contains $\overline{\Omega_0}$. 
Let $\varphi \in C^{5}(\overline{\Omega}), \psi\in C^{3}(\overline{\Omega})$ where $\varphi$ is convex, and $\min_{\p\Omega}\psi>0$. Let $F$ be given by (\ref{F2cost}) where
 the smooth function $F^0$ satisfies (\ref{F02}) and the smooth function $F^1$ satisfies (\ref{F1C}). 
If $\e>0$ is small (depending 
only on $n,\Omega,\Omega_0$, $\dist(\Omega_0,\p\Omega)$, $\varphi,\psi,\eta$, and $D_\ast$), then, the following facts hold:
\begin{enumerate}
\item[(i)] The equation (\ref{sAe2}) has  a uniformly convex solution $u_\e \in W^{4,s}(\Omega)$  for all $s\in (n,\infty)$.
\item[(ii)] Let $u_\e\in W^{4, s}(\Omega)$ $(s>n)$ be a solution to (\ref{sAe2}). Then, after extracting a subsequence of $\e\rightarrow 0$, the sequence $\{u_\e\}$ converges uniformly on compact subsets of $\Omega$ to a minimizer $u\in \bar{S}[\varphi,\Omega_0]$  
of (\ref{minF2}).
\end{enumerate}
\end{thm}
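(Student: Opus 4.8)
The two assertions require different mechanisms. For (i), a solvability statement, the plan is a Leray--Schauder degree argument (equivalently, the method of continuity): embed \eqref{sAe2} into a one--parameter family of second boundary value problems, one endpoint being the genuine equation and the other a problem already known to be solvable in $W^{4,s}(\Om)$ (for instance the Abreu--type equation with the singular term switched off, as in \cite{LePRS}), and run the degree in the class of uniformly convex $u\in W^{4,s}(\Om)$ with $u=\varphi$ and $(\det D^2u)^{-1}=\psi$ on $\pom$. The continuation closes as soon as one has \emph{a priori} estimates, uniform along the family and --- with an eye on (ii) --- uniform for all small $\e$: a bound $\|u_\e\|_{W^{4,s}(\Om)}\le C$ together with a uniform strict convexity bound $\det D^2u_\e\ge\lam>0$. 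Part (ii) is then a $\Gamma$--convergence type statement, exploiting the variational characterization of the first two equations of \eqref{sAe2}.

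I would build the a priori estimates in stages. Convexity and $u_\e=\varphi$ on $\pom$, via comparison with affine and quadratic barriers, give uniform $\|u_\e\|_{L^\infty(\Om)}$ and $\|Du_\e\|_{L^\infty(\Om)}$ bounds; moreover the calibration of $\mu_\e$ (the correction $\e^{1/(3n^2)}(e^{\rho}-1)$) furnishes barriers in $\Om\setminus\Om_0$ keeping $(u_\e-\mu_\e)/\e$ under control there. The crux is the two--sided bound $0<\lam\le\det D^2u_\e\le\Lambda$. On $\pom$ it is pinned by $w_\e=\psi>0$. In the interior I would invoke the two observations made before Theorem \ref{RCthm}: assuming a preliminary pinching of $\det D^2u_\e$, the definite sign of the singular measure $\mathcal{S}u_\e$ (from the convexity of $F^1$ in $\pp$ and of $u_\e$) together with \eqref{F02}--\eqref{F1C} makes $w_\e$ a supersolution of $U_\e^{ij}D_{ij}$ with right--hand side bounded from the appropriate side, so the minimum principle/ABP estimate for the linearized Monge--Amp\`ere operator yields $\det D^2u_\e\le\Lambda$; whereas the subsolution property is spoiled only by $\mathcal{S}u_\e$, and multiplying $w_\e$ by explicit positive H\"older continuous functions fashioned from $\psi$, $\varphi$ and $\rho$ --- tailored to absorb $\mathcal{S}u_\e$ into a bounded drift --- turns it into a subsolution of $U_\e^{ij}D_{ij}$ with bounded drift and bounded right--hand side, whence the twisted Harnack inequality of Theorem \ref{HI_thm} gives $\det D^2u_\e\ge\lam$, first on compact subsets of $\Om$ and then, by a boundary--layer analysis using the smoothness and uniform convexity of $\Om$ and the smoothness of $\varphi,\psi$, up to $\pom$. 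Since these bounds presuppose a pinched Hessian determinant, they are carried along the homotopy jointly with the degree continuation. Once $\lam\le\det D^2u_\e\le\Lambda$ is in hand the bootstrap is routine: Caffarelli's interior and boundary regularity for the Monge--Amp\`ere equation gives $u_\e\in C^{1,\alpha}(\bom)$; the full Harnack inequality of Theorem \ref{HI_thm} gives $w_\e\in C^{\alpha}_{\loc}(\Om)$, hence $\det D^2u_\e=w_\e^{-1}\in C^{\alpha}_{\loc}$, so the $C^{2,\alpha}$ theory for the Monge--Amp\`ere equation with H\"older right--hand side (interior, and up to $\pom$ given the smooth uniformly convex data) gives $u_\e\in C^{2,\alpha}_{\loc}(\Om)$; then $U_\e^{ij}\in C^{\alpha}$ and, as $D^2u_\e\in C^{\alpha}$, the right--hand side $f_\e$ of \eqref{Uwfe} lies in $C^{\alpha}$, so Schauder theory gives $w_\e\in C^{2,\alpha}$, and a further iteration gives $u_\e\in C^{4,\alpha}_{\loc}(\Om)\cap W^{4,s}(\Om)$ for every $s\in(n,\infty)$, which closes the degree argument and proves (i).

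For (ii), let $u_\e\in W^{4,s}(\Om)$ solve \eqref{sAe2}. Since the $C^0$, gradient and interior estimates are uniform in small $\e$, Arzel\`a--Ascoli together with convexity produces a subsequence $u_\e\to u$ locally uniformly in $\Om$ with $u$ convex. That $u\in\bar{S}[\varphi,\Om_0]$ follows by passing convexity to the limit and by integrating \eqref{Uwfe} (equivalently, testing the Euler--Lagrange relation) to bound $\e^{-1}\int_{\Om\setminus\Om_0}(u_\e-\mu_\e)^2$, so $u=\lim_{\e\to0}\mu_\e=\varphi$ on $\Om\setminus\Om_0$, which provides both the convex extension and the matching condition in \eqref{barS2}. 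For minimality, the first two equations of \eqref{sAe2} are the Euler--Lagrange equations of the convex functional $J_\e$, so $u_\e$ minimizes $J_\e$ among uniformly convex competitors with boundary value $\varphi$; given $v\in\bar{S}[\varphi,\Om_0]$, a recovery--sequence construction as in \cite{CR, LePRS} produces uniformly convex $v_\e\to v$ agreeing with $\mu_\e$ on $\Om\setminus\Om_0$, and letting $\e\to0$ in $J_\e(u_\e)\le J_\e(v_\e)$ the penalizations $-\e\int_\Om\log\det D^2(\cdot)$ and $\tfrac{1}{2\e}\int_{\Om\setminus\Om_0}(\cdot-\mu_\e)^2$ drop out (for $u_\e$ because $\lam\le\det D^2u_\e\le\Lambda$, for $v_\e$ by construction), while lower semicontinuity of $v\mapsto\int_{\Om_0}F(x,v,Dv)$ along the locally uniform convergence --- using the uniform Lipschitz bound and the convexity of $F$ in $(z,\pp)$ --- yields $\int_{\Om_0}F(x,u,Du)\le\int_{\Om_0}F(x,v,Dv)$. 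Hence $u$ is a minimizer of \eqref{minF2}.

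I expect the interior two--sided bound on $\det D^2u_\e$, uniform in $\e$, to be the main obstacle. The singular term $\mathcal{S}u_\e$ lives only in the proper subdomain $\Om_0$ and, in contrast to \cite{KLWZ}, cannot be eliminated by any global change of unknown; moreover $F^1$ may depend on $x$, so $\mathcal{S}u_\e$ need not be a constant multiple of $-\Delta u_\e$. This is exactly why the twisted Harnack inequality is indispensable: $w_\e$ is a genuine supersolution but only a \emph{twisted} subsolution --- a subsolution after multiplication by positive H\"older factors --- and Theorem \ref{HI_thm} is built to extract two--sided interior control from precisely that configuration. The delicate points are to choose the twisting functions concretely enough that the resulting drift and right--hand side stay bounded uniformly in $\e$, and to patch the interior Harnack bound to the boundary behaviour of $w_\e$; the remainder is a careful but by now standard use of the (linearized) Monge--Amp\`ere regularity theory and of the $\Gamma$--convergence machinery.
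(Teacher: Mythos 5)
Your global architecture (a priori estimates plus degree theory for (i); compactness from the uniform $L^\infty$ bound and convexity plus a variational comparison for (ii)) is the paper's, but two steps in your plan would fail as stated. First, your route to the lower bound $\det D^2u_\e\ge\lambda$ via Theorem \ref{HI_thm} is circular: the twisted Harnack inequality assumes the two-sided pinching \eqref{pinch1} as a hypothesis, so it cannot be the device that produces the pinching. In the paper the order is: upper bound on $\det D^2u_\e$ by the Chau--Weinkove maximum-principle trick applied to $\log w_\e-Mu_\e$ (Lemma \ref{detD2ue_up}); gradient bound; then the lower bound on $\det D^2u_\e$ comes from the ABP estimate applied to the transformed subsolution $\eta^{x_0}_\e=w_\e e^{F^{x_0}_\e(Du_\e)}$ of Lemma \ref{Trlem} (Lemma \ref{detD2ue_low}); only after the pinching and the resulting $C^{1,\alpha_0(\e)}$ estimate (Proposition \ref{C1a_prop}) is Theorem \ref{HI_thm} invoked, and only to get the interior H\"older continuity of $w_\e$ (Proposition \ref{intw}), which then feeds the $C^{2,\alpha}$ and $W^{4,s}$ bootstrap. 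Second, twisting factors ``fashioned from $\psi$, $\varphi$ and $\rho$'' cannot absorb $\mathcal{S}u_\e$: the twist must be built from the solution itself, $G^{\bar x}(x)=e^{D_\ast|Du_\e(x)-Du_\e(\bar x)|^2/(2\e)}$, precisely because $\div\big(DF^{\bar x}_\e(Du_\e)\big)$ generates the term $\frac{D_\ast}{\e}\Delta u_\e$ that cancels the singular part $-D_\ast\Delta u_\e$ of $f_\e$ (using $0\le F^1_{p_ip_j}\le D_\ast I_n$), and its H\"older continuity, uniformly in the base point $\bar x$, is exactly what the prior $C^{1,\alpha_0(\e)}$ estimate supplies; data-built factors have no such cancellation property.

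The second genuine gap is your insistence on estimates uniform in $\e$: a uniform $W^{4,s}$ bound and uniform pinching $\lambda\le\det D^2u_\e\le\Lambda$. For the degree argument only fixed-$\e$ bounds are needed, and indeed every constant in the paper beyond the $L^\infty$ estimate \eqref{ue_Linf} degenerates as $\e\to0$ (e.g. $M\sim C_0(\e)/\e$ in Lemma \ref{detD2ue_up}). More importantly, a uniform lower bound on $\det D^2u_\e$ cannot hold in general: minimizers of the Rochet--Chon\'e-type problem typically have flat (bunching) regions, while uniform pinching would force $\det D^2u\ge\lambda$ in the limit. Since your proof of (ii) uses this uniform pinching to discard the penalization $-\e\int_\Omega\log\det D^2u_\e$ when passing to the limit in $J_\e(u_\e)\le J_\e(v_\e)$, that step is unsupported as written; the paper's part (ii) uses only the $\e$-uniform $L^\infty$ bound and convexity for local uniform convergence and defers the minimality argument to \cite{LePRS}, where the logarithmic term is controlled without any uniform determinant bounds. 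The remaining ingredients of your outline (boundary H\"older estimates for $w_\e$ via the same twist, gluing by Savin's localization, Schauder/$W^{2,s}$ bootstrap, lower semicontinuity of the Lagrangian term) are consistent with the paper.
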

We will prove Theorem \ref{RCthm} in Section \ref{RC_sect}.
\begin{rem} Theorem \ref{RCthm}  resolves an open issue raised in \cite[Remark 1.9]{LePRS} for all dimensions $n\geq 3$. Moreover, its conclusions apply to the Rochet--Chon\'e
model with a quadratic cost, that is, (\ref{Fqcost}) with $q=2$.
\end{rem}
We will establish the solvability in Theorem \ref{RCthm}(i) by using a priori estimates and degree theory. For the a priori estimates, 
a key ingredient in our analysis is a Harnack inequality for singular equations of linearized Monge--Amp\`ere type satisfying certain twisted conditions. This is the subject of the next subsection.
\subsection{Singular linearized Monge--Amp\`ere equations with drifts and twisted conditions}
\label{HI_stat}
Let $\lambda,\Lambda,\tl$ and $\tL$ be fixed positive constants where $\lambda\leq \Lambda$ and $\tl\leq \tL$.
Let $\Omega\subset\R^n$ ($n\geq 2$) be an open, convex and bounded domain. Assume that a strictly convex function $u\in C^2(\Omega)$ satisfies the Monge--Amp\`ere equation
\begin{equation}
\label{pinch1}
\lambda\leq \det D^{2} u\leq \Lambda\quad\text{in}~\Omega.
\end{equation}
We will denote the section of $u$ centered at $x\in\Omega$ with height $h>0$  by \[S_u(x, h)=\{y\in \Omega: u(y)< u(x) + Du(x)\cdot (y-x) + h\}.\] 
Throughout, we denote the cofactor matrix of the Hessian matrix $D^2 u= \left(D_{ij} u\right)_{1\leq i, j\leq n}$ by 
$$U = (\det D^2 u)(D^2 u)^{-1}\equiv (U^{ij})_{1\leq i, j\leq n}.$$
Let $A= (a^{ij})_{1\leq i, j\leq n}$  be a symmetric matrix satisfying
\begin{equation}
 \label{AlamU}
\tilde\lambda U\leq A\leq\tilde\Lambda U \quad\text{in}~\Omega.
\end{equation}
We will establish a Harnack inequality for singular linearized Monge--Amp\`ere equations with bounded drifts
\begin{equation}
\label{ALMA}
a^{ij} D_{ij} v + \bb\cdot Dv=f \quad\text{in}~\Omega,\end{equation}
where $f$ is very  singular in general. 

Equations of the type (\ref{ALMA}) include the linearized Monge--Amp\`ere equations when $\tl=\tL=1$, and linear, uniformly elliptic equations when $u(x)= |x|^2/2$ (so that $\lambda=\Lambda=1$).

A Harnack inequality for (\ref{ALMA}) in the case without drifts, $\tl=\tL=1$,  and $f\in L^{r}$ where $r>n/2$ was established in \cite{LN}, and in the case with bounded drifts and $f\in L^{n}$ was established in \cite{LeCCM}. 
Since the validity of a Harnack inequality will imply a H\"older continuity estimate,
the exponent $n/2$ is critical here, as the best regularity for $\Delta v\in L^r$ is $v\in W^{2, r}_{\mathrm{loc}}$ when $r\in (1,\infty)$ and the Sobolev embedding $W^{2, r}\hookrightarrow C^{\alpha}$ only holds when $r>n/2$.

 In applications, as in (\ref{sAe2}), the right-hand side $f$ is a priori only in $L^{1+\hat \e}$ where $\hat \e>0$ is small so $f\not \in L^{n/2}$ when $n\geq 3$. This integrability of $f_\e$ in (\ref{sAe2}) comes from the sharp second-order Sobolev regularity of the Monge--Amp\`ere equation 
 in the works of De Philippis--Figalli--Savin \cite{DFS} and Schmidt \cite{Sc}.
 Thus, in general, there is no Harnack inequality for (\ref{ALMA}) for very singular $f$.
 
However, if we can suitably ``twist'' $v$ to make the right-hand side of (\ref{ALMA}) being bounded, then a Harnack inequality can be established. We consider here twisting by positive multiplicative H\"older continuous functions. The rough idea is as follows. Suppose for each $z\in\Omega$, we can find positive H\"older continuous functions
$\bar \eta^z$, $\underline {\eta}^z$ such that $\bar \eta^z(z)=\underline {\eta}^z(z)=1$, and the {\it ``twisted versions''} of $v$ defined by $\bar v^z:= v \bar \eta^z$ and $\underline {v}^z:= v \underline{ \eta}^z$ satisfy
\[a^{ij} D_{ij} \bar v^z + \bb^z\cdot D\bar v^z\leq \bar f^z, \quad a^{ij} D_{ij} \underline {v}^z + \underline{\bb}^z\cdot D\underline {v}^z\geq \underline{f}^z \]
for suitable functions $\bar \bb^z, \bar f^z$,  and $\underline{\bb}^z, \underline{f}^z$ which are uniformly bounded in $z$. Then, locally, up to positive multiplicative H\"older continuous corrections, $v$ is a solution to a linearized Monge--Amp\`ere equation with bounded 
drifts and bounded right-hand side! Since a Harnack inequality is a local statement, the multiplicative H\"older continuous corrections
should not affect the validity of a Harnack inequality for $v$. This is what we will prove in Theorem \ref{HI_thm} which is of independent interest beyond its application to singular Abreu equations. 

Our choice of terminologies ``twist", ``twisting", and ``twisted" to describe the above multiplication process is motivated by the idea that, within a class of data, the transformed functions in our applications usually change the solution nature of the original functions, from being supersolutions to being subsolutions and vice versa. For example, in \eqref{sup_hr}, $v$ is a supersolution while its transformed (or twisted) versions $\eta^z$ are subsolutions (see \eqref{sub_hzr}) of a class of singular equations of linearized Monge--Amp\`ere type with $L^n$ right-hand side.

Our second main result states as follows.

\begin{thm} [Harnack inequality for singular equations of linearized Monge--Amp\`ere type with twisted conditions]
\label{HI_thm}Assume that (\ref{pinch1}) and (\ref{AlamU}) are satisfied in an open set $\Omega\subset\R^n$. Let $S:=S_u (x_\ast, h)\subset\subset\Omega$ be a section satisfying $S\subset S_u(\bar x, \bar t/2)$ where $S_u(\bar x, \bar t)\subset\subset \Omega$ and 
$B_{R^{-1}}(z)\subset S_u(\bar x, \bar t)\subset B_{R}(z)$ for some $R>1$. 
Suppose that $v\geq 0$ is a $W^{2, n}_{\mathrm{loc}}(\Omega)$ function satisfying the following conditions:
\begin{enumerate}
\item[(i)] It is a supersolution:
\begin{equation}
\label{sup_hr}
a^{ij} D_{ij} v+ \bb\cdot Dv \leq |f| \quad\text{in }S,\end{equation}
where 
$f\in L^n (S)$, and $\bb\in L^\infty(S;\R^n)$. 
\item[(ii)] \emph{(Twisted conditions)} It is a subsolution up to multiplicative H\"older continuous corrections. Precisely, for each $z\in S$, there are functions $\bb^z\in L^\infty(S;\R^n)$, $f^z\in L^n(S)$,  $G^z: S\rightarrow [1, \infty)$ such that
\[G^z(z)=1, \quad \text{and } G^z\in W^{2, n}_{\mathrm{loc}}(S)\cap C^{\gamma}(\overline{S})\quad\text{for some } \gamma\in (0, 1),\] and
 $\eta^z(x)= v(x) G^z(x)$ satisfies
\begin{equation}\label{sub_hzr}a^{ij} D_{ij} \eta^z+ \bb^z\cdot D\eta^z \geq -|f^z| \quad\text{in }S.\end{equation}
\end{enumerate}
Assume that
\begin{equation}
\label{bbzinfty}
\|\bb\|_{L^\infty(S)} + \sup_{z\in S}\|\bb^z\|_{L^\infty(S)} +\sup_{z\in S}\sup_{x\in S }\frac{|G^z(x)-G^z(z)|}{|x-z|^\gamma} \leq K. \end{equation}
Then, there exist positive constants $h_0$ and $C$ depending only on $n,\lambda,\Lambda,\tl, \tL,  R,\gamma,  K$ such that  whenever $h\leq h_0$, we have

\begin{equation}
\label{HI_v}
\sup_{S_{u}(x_\ast, h/8)} v\leq C \Big(\inf_{S_{u}(x_\ast, h/8)} v+ h^{1/2}\|f\|_{L^n(S)}+ h^{1/2}\sup_{z\in S}\|f^z\|_{L^n(S)}\Big).
\end{equation}
In particular, $v$ is locally H\"older continuous with a positive exponent depending on $n$, $\lambda,\Lambda$,$\tl, \tL$, $R,\gamma$, and $K$.
\end{thm}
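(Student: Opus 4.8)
The plan is to follow the classical Caffarelli--Gutiérrez strategy for Harnack inequalities on Monge--Ampère sections, but run the weak Harnack (supersolution) part on $v$ directly and the local boundedness (subsolution) part on the twisted functions $\eta^z = v G^z$, then glue the two halves together using that $G^z$ is bounded above and below on small sections. First I would record the geometric facts we may quote: since $S_u(\bar x,\bar t)$ is comparable to a Euclidean ball of radius $\sim R$, John's lemma and the engulfing property of sections give that on all small subsections $S_u(x_\ast,h/8) \subset S_u(x_\ast,h/4) \subset S \subset S_u(\bar x,\bar t/2)$ we have volume and inclusion estimates with constants depending only on $n,\lambda,\Lambda,R$. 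Crucially, $C^\gamma$ regularity of $G^z$ with $G^z(z)=1$ and $G^z \geq 1$, together with $\mathrm{diam}(S) \leq C h^{\beta}$ for some $\beta>0$ (the standard section-diameter bound from \eqref{pinch1}), forces
\begin{equation*}
1 \leq G^z(x) \leq 1 + K\,\mathrm{diam}(S)^\gamma \leq 1 + C K h^{\beta\gamma} \leq 2 \quad\text{for all } x\in S,
\end{equation*}
once $h \leq h_0$; so on $S$ the twist $G^z$ is a genuine two-sided bounded multiplier, and $v$ and $\eta^z$ are pointwise comparable within a factor of $2$.

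Next I would establish the two one-sided estimates. For the supersolution side, \eqref{sup_hr} is exactly a linearized Monge--Ampère inequality with bounded drift $\bb$ and right-hand side $f\in L^n(S)$; applying the weak Harnack inequality for such equations (the drift version from \cite{LeCCM}, with the $L^n$ norm and the section-size gain $h^{1/2}$ arising from rescaling the section to unit size) yields, for an exponent $p_0 = p_0(n,\lambda,\Lambda,\tl,\tL,R,K) > 0$,
\begin{equation*}
\Big( \fint_{S_u(x_\ast,h/4)} v^{p_0} \Big)^{1/p_0} \leq C \Big( \inf_{S_u(x_\ast,h/8)} v + h^{1/2}\|f\|_{L^n(S)} \Big).
\end{equation*}
For the subsolution side, fix $z$ realizing (approximately) $\sup_{S_u(x_\ast,h/8)} v$; then $\eta^z$ satisfies \eqref{sub_hzr}, again a linearized Monge--Ampère inequality with bounded drift $\bb^z$ and $L^n$ right-hand side $f^z$, so the local maximum principle / local boundedness estimate for these equations gives
\begin{equation*}
\sup_{S_u(x_\ast,h/8)} \eta^z \leq C \Big( \fint_{S_u(x_\ast,h/4)} (\eta^z)^{p_0} \Big)^{1/p_0} + C h^{1/2}\|f^z\|_{L^n(S)}.
\end{equation*}
Now I combine: on the left, $\eta^z \geq v$ (since $G^z\geq 1$); on the right, $\eta^z \leq 2v$, so the averaged $L^{p_0}$ norm of $\eta^z$ is at most $2$ times that of $v$, which is controlled by the supersolution estimate. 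Chaining the three displays and taking the supremum over the relevant $z$ (noting all constants are uniform in $z$ by \eqref{bbzinfty}) delivers \eqref{HI_v}. The final Hölder continuity assertion then follows by the standard oscillation-decay iteration: applying \eqref{HI_v} to $v$ and to $M_h - v$ or $v - m_h$ on a nested sequence of sections $S_u(x_\ast, 8^{-k}h)$ — where one must check both satisfy the hypotheses, the non-negative shift being harmless since constants solve the homogeneous equation and the twist of a shifted function can be taken to be the same $G^z$ — gives geometric decay of oscillation, hence Hölder continuity with exponent and constant depending only on the listed parameters; the passage from section-radius decay to Euclidean-radius Hölder exponent uses again the section-size bounds from \eqref{pinch1}.

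The main obstacle I anticipate is not any single estimate but the bookkeeping that makes the two halves truly compatible: one must ensure the same interior section $S_u(x_\ast,h/4)$ (or a fixed comparable pair) serves simultaneously as the domain where the weak Harnack produces an $L^{p_0}$ lower bound for $v$ and where local boundedness consumes an $L^{p_0}$ bound for $\eta^z$, with one common exponent $p_0$ — this requires that the ellipticity ratios entering both lemmas are controlled by the same constants, which is where \eqref{AlamU} (giving $\tl U \leq A \leq \tL U$) and \eqref{bbzinfty} are used in tandem. A secondary technical point is justifying that $h_0$ can be chosen small enough, uniformly, so that (a) the twist bound $G^z \leq 2$ holds on all of $S$ and (b) the rescaled sections on which we invoke \cite{LN,LeCCM} have the normalized size those results require; both reduce to the elementary estimate $\mathrm{diam}(S_u(x_\ast,h)) \leq C(n,\lambda,\Lambda) R\, h^{1/n}$ combined with $S \subset S_u(\bar x,\bar t/2)$, but the interplay with the $C^\gamma$ seminorm of $G^z$ must be tracked carefully since a too-generous $h_0$ would break the comparability of $v$ and $\eta^z$.
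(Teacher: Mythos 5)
Your decomposition (weak Harnack for the supersolution $v$ plus a local maximum principle for the twisted subsolution $\eta^z$, glued via $1\le G^z\le 2$ on small sections) is the classical Caffarelli--Cabr\'e-type route, but it has a genuine gap at its central step: the local boundedness estimate
$\sup_{S_u(x_\ast,h/8)}\eta^z\le C\big(\fint_{S_u(x_\ast,h/4)}(\eta^z)^{p_0}\big)^{1/p_0}+Ch^{1/2}\|f^z\|_{L^n(S)}$
is not a quotable fact in this setting. For uniformly elliptic operators such an estimate holds for every $p_0>0$, so matching exponents is free; here the operator satisfies only $\tl U\le A\le \tL U$ with $\lambda\le\det D^2u\le\Lambda$, it carries a drift, and the estimate must hold in the section geometry with the $h^{1/2}$ scaling and with an exponent $p_0$ no larger than the (small, universal) exponent $\e$ coming from the $L^\e$ decay of supersolutions. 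None of the references the paper relies on (\cite{CG}, \cite{LN}, \cite{LeCCM}, \cite{LMT}) provides a local maximum principle for subsolutions of this degenerate class with drift, and proving it is of essentially the same depth as the theorem itself. The paper avoids this ingredient entirely: it uses only supersolution tools. Concretely, after John-rescaling it runs an Imbert--Silvestre touching argument with the barrier $t(1-\tilde u)^{-\beta}$; at the touching point $x_0$ the twisted hypothesis \eqref{sub_hzr} is used to observe that $C_1-C_2\,\tilde v\,\tilde G^{Tx_0}$ is a \emph{supersolution} of the operator with drift $\tilde\bb^{Tx_0}$, to which the critical density estimate (Lemma \ref{res_lem}) applies; combined with the $L^\e$ decay (Theorem \ref{decay_thm}) for $\tilde v$ itself this yields the contradiction. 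This is also why the paper needs the quantitative smallness $|G^z-1|\le \tfrac{1}{3M+1}$ (so the arithmetic $w\le M\Rightarrow \tilde v\ge H_0/2$ closes), whereas your scheme only needs $G^z\le 2$ --- a sign that your subsolution half is carrying weight that has not been paid for.

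A secondary point: in your H\"older step you assert that the shifted functions $v-m_h$ and $M_h-v$ satisfy the same hypotheses ``with the same twist $G^z$.'' This is not immediate: $(v-c)G^z=\eta^z-cG^z$ satisfies \eqref{sub_hzr} only modulo the uncontrolled term $c\,(a^{ij}D_{ij}G^z+\bb^z\cdot DG^z)$, and $M_h-v$ is a supersolution only if $v$ is an \emph{untwisted} subsolution, which is not assumed. Any oscillation-decay iteration in the twisted setting has to address how constants interact with the multiplicative correction, rather than treating the shift as harmless.
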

The proof of Theorem \ref{HI_thm} will be given in Section \ref{HI_sect}.
\begin{rem} When $G^z\equiv 1$, $b^z=b$ and $f^z=f$ for all $z\in S$, Theorem \ref{HI_thm} was proved in \cite[Theorem 1.1]{LeCCM} which extends the fundamental Harnack inequality of the linearized Monge--Amp\`ere equation without drifts in Caffarelli--Guti\'errez \cite{CG}.
\end{rem}
\begin{rem} We have the following remarks on the statement of Theorem \ref{HI_thm}:
\begin{enumerate}
 \item We only consider twisted conditions for the subsolution, and this version of Theorem \ref{HI_thm} suffices for the proof of Theorem \ref{RCthm}. Of course, one can also consider twisted conditions for the supersolution. 
 \item For simplicity, we use the same second order operator $a^{ij} D_{ij}$ in (\ref{sup_hr}) and (\ref{sub_hzr}). The conclusion of Theorem \ref{HI_thm} is unchanged if we use different operators $a^{ij} D_{ij}$ as long as $(a^{ij})$ satisfies (\ref{AlamU}).
 \end{enumerate}
\end{rem}
We briefly explain the proof of Theorem \ref{HI_thm} which follows that of \cite[Theorem 1.1]{LeCCM} where no twisted conditions were involved. First, exactly as in \cite[Theorem 5.3]{LeCCM}, we obtain from the supersolution property (\ref{sup_hr}) the $L^\e$ estimate for $v$. This means that there exists $\e(n,\lambda,\Lambda,\tl, \tL,  R)>0$
such that the distribution function of $v$, $|\{v>t\}\cap S_u(x_\ast, h/4)|$ decays like $t^{-\e}$. Thus, up to constants depending only on $n,\lambda,\Lambda,\tl, \tL,  R$, $v$ is comparable to $v(x_\ast)$ in $S_u(x_\ast, h/4)$ except a set of very small measure. Next, to prove the Harnack inequality, we show, by contradiction, that the maximum of $v$
in $S_u(x_\ast, h/4)$ cannot be much larger than $v(x_\ast)$. Were this not the case, there would exist $\hat x\in S_u(x_\ast, h/4)$ such that $v(\hat x)$ is much larger than $v(x_\ast)$. Then, we apply the $L^\e$ estimate for the supersolution $C_1-C_2 v G^{\hat x}$ (of the operator $a^{ij} D_{ij} + \bb^{\hat x}\cdot D$, due to (\ref{sub_hzr})) to find that $v$ is much larger than $v(x_\ast)$ in a subset of $S_u(x_\ast, h/4)$ of positive measure. This contradicts
the $L^\e$ estimate for $v$.
\vglue 0.4cm

The rest of paper is organized as follows. In Section \ref{RC_sect}, we will establish a priori estimates for solutions to (\ref{sAe2}) and then prove Theorem \ref{RCthm}. In Section \ref{HI_sect}, we will prove Theorem \ref{HI_thm}.

\section{Solvability and convergence of singular Abreu equations}
\label{RC_sect}
In this section, we prove Theorem \ref{RCthm} via a priori estimates and degree theory.
\subsection{A priori higher order derivative estimates}
Fix $s\in (n,\infty)$. Throughout this section, we let  $u_\e \in W^{4,s}(\Omega)$  be a uniformly convex solution to (\ref{sAe2}). Our main result in this section is the global $W^{4, s}(\Omega)$ estimate in Theorem \ref{W4s_thm} for $u_\e$  in terms of the given data $n,\Omega,\Omega_0$, $\varphi,\psi,\eta$, $D_\ast$, and $\e$. As 
the rewriting of (\ref{sAe2}) into (\ref{detuwe}) and (\ref{Uwfe}) indicates, we will obtain regularity estimates for $u_\e$ first, and then for $w_\e$.

From the proof of Theorem 1.4(i) in \cite{LePRS} (see (3.15) there), we have the following uniform estimate.
\begin{prop}[Uniform estimate for $u_\e$]
\label{Linf_prop}
If $\e\leq \e_0$ is sufficiently small, depending 
only on $n,\Omega,\Omega_0,\varphi,\psi,\eta$, $D_\ast$, and $\dist(\Omega_0,\p\Omega)$,  then
\begin{equation}
\label{ue_Linf}
\|u_\e\|_{L^{\infty}(\Omega)}\leq C\end{equation}
where $C$ is independent of $\e$. 
\end{prop}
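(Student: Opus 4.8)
The plan is to obtain the uniform $L^\infty$ bound for $u_\e$ by constructing explicit barriers from above and below that are independent of $\e$, exploiting the Monge--Amp\`ere equation $\det D^2 u_\e = w_\e^{-1}$ together with the boundary conditions $u_\e = \varphi$ on $\p\Omega$ and $w_\e = \psi$ on $\p\Omega$, and the strict positivity $\min_{\p\Omega}\psi > 0$. Since the statement explicitly refers to the proof of Theorem 1.4(i) in \cite{LePRS} and to estimate (3.15) there, I would essentially reproduce that argument. First I would get the lower bound: because $u_\e$ is convex on $\Omega$ with $u_\e = \varphi$ on $\p\Omega$, and $\varphi$ is itself convex on $\overline\Omega$, the convex function $u_\e$ lies below the convex extension determined by its boundary data in the appropriate sense; more precisely, comparing $u_\e$ with the affine support functions at boundary points (or with $\varphi$ directly via the maximum principle for the Monge--Amp\`ere operator, noting $\det D^2 u_\e > 0 = \det D^2(\text{affine})$), one deduces $u_\e \geq \min_{\p\Omega}\varphi - C$ on $\Omega$, with $C$ controlled by $\mathrm{diam}(\Omega)$ and $\|D\varphi\|_{L^\infty}$. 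Indeed the cleanest route for the lower bound is that $u_\e$, being convex with boundary value $\varphi$, satisfies $u_\e(x) \geq \ell_x(x)$ where $\ell_x$ is any affine function with $\ell_x \leq \varphi$ on $\p\Omega$ touching from below, giving $u_\e \geq -\|\varphi\|_{L^\infty(\p\Omega)} - \|D\varphi\|_{L^\infty}\mathrm{diam}(\Omega)$.

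For the upper bound, the key is to control $w_\e = (\det D^2 u_\e)^{-1}$, equivalently to get a lower bound $\det D^2 u_\e \geq c_0 > 0$ with $c_0$ independent of $\e$, because then $u_\e$ solves a Monge--Amp\`ere equation with bounded-below right-hand side and prescribed boundary data $\varphi$, and the classical comparison principle (comparing with the convex solution of $\det D^2 v = c_0$, $v = \varphi$ on $\p\Omega$, or with an explicit paraboloid) yields $u_\e \leq C$. To bound $\det D^2 u_\e$ from below, I would use the equation for $w_\e$: since $\sum U_\e^{ij} D_{ij} w_\e = f_\e/\e$ and the structure of $f_\e$ shows (after using the sign/monotonicity hypotheses \eqref{F02}, the bound \eqref{F1C} on $F^1_{p_ip_j} \geq 0$ so that $\mathcal S u_\e \geq 0$ contributes with a favorable sign, and the explicit form of $\mu_\e$) that $w_\e$ is a subsolution or supersolution of the linearized operator with controlled right-hand side; combined with the boundary condition $w_\e = \psi \geq \min_{\p\Omega}\psi > 0$, the maximum principle for $U_\e^{ij}D_{ij}$ gives $w_\e \leq C$ in $\Omega$, i.e. $\det D^2 u_\e \geq 1/C$. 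Here one must be careful that the zeroth-order and drift-type terms hidden in $f_\e$ (through $\p F^0/\p z(x,u_\e)$ and $\p_{x_i}(\p F^1/\p p_i)$) are controlled using Proposition-type bounds on $\|u_\e\|_{L^\infty}$ that are being proven — so the argument is really a simultaneous or bootstrap estimate, which is precisely why \cite{LePRS} presents it as a single self-contained computation.

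The main obstacle is the apparent circularity just noted: the lower bound on $\det D^2 u_\e$ seems to require an a priori $L^\infty$ bound on $u_\e$ (to bound $\p F^0/\p z(x,u_\e)$ via $\eta(|u_\e|)$), while the $L^\infty$ bound on $u_\e$ is what we want. I would resolve this exactly as in \cite{LePRS}: obtain the lower bound on $u_\e$ first (it is unconditional, as above), then observe that the upper bound on $u_\e$ can be derived together with the upper bound on $w_\e$ by a barrier that absorbs the $\eta(\|u_\e\|_{L^\infty})$ terms using the smallness of $\e$ — the term $\frac{1}{2\e}\int(v-\mu_\e)^2$ in $J_\e$ and the minimizing property of $u_\e$ among competitors (comparing $J_\e(u_\e)$ with $J_\e$ of a fixed convex competitor such as a large paraboloid or $\varphi$ itself) give an energy bound that, via the penalization, forces $u_\e$ close to $\mu_\e \approx \varphi$ on $\Omega\setminus\Omega_0$ and hence, by convexity, bounded on all of $\Omega$. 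In short: the one-line justification is that (3.15) of \cite{LePRS} is exactly this statement, and I would invoke it; the substantive content is the energy-comparison plus barrier argument that breaks the circularity by using the smallness of $\e$.
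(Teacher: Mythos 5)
The paper's own ``proof'' of Proposition \ref{Linf_prop} is exactly the citation you end with: the estimate is (3.15) in \cite{LePRS}, established in the proof of Theorem 1.4(i) there, so invoking that is legitimate. The problem is with your reconstruction of the underlying argument, which swaps the easy and hard directions and contains a false step. For a convex $u_\e$ with $u_\e=\varphi$ on $\p\Omega$, the \emph{upper} bound is the trivial one: $\sup_\Omega u_\e\le\max_{\p\Omega}\varphi$ by the maximum principle for convex functions. Your claimed ``unconditional'' \emph{lower} bound, $u_\e\ge\ell$ for any affine $\ell\le\varphi$ on $\p\Omega$, is simply false: take $\Omega=B_1$, $\varphi=0$ and $u=M(|x|^2-1)$, which is convex with the right boundary data and has $u(0)=-M$ arbitrarily negative. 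Comparison with affine functions (zero Monge--Amp\`ere measure) only gives upper bounds for the function with the larger determinant; to bound $u_\e$ from below by comparison one would need an $\e$-independent \emph{upper} bound on $\det D^2u_\e$, which is precisely what is not available at this stage (the paper's Lemma \ref{detD2ue_up} gives only $C_1(\e)$, and is proved \emph{after} and using Proposition \ref{Linf_prop}).

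Your route to the upper bound is likewise misdirected: a uniform-in-$\e$ positive lower bound on $\det D^2 u_\e$ is neither needed (the upper bound is free by convexity) nor obtainable here --- both determinant bounds in the paper (Lemmas \ref{detD2ue_up} and \ref{detD2ue_low}) are $\e$-dependent and rest on \eqref{ue_Linf}, so your scheme is circular relative to the paper's. You also cannot invoke ``the minimizing property of $u_\e$'': the proposition must hold for an arbitrary uniformly convex $W^{4,s}$ solution of the boundary value problem \eqref{sAe2} (this is what the degree-theoretic argument requires), and such a solution is not known to minimize $J_\e$. The actual mechanism behind (3.15) in \cite{LePRS} is of a different nature: one tests the equation and integrates by parts using the divergence-free cofactor matrix $U^{ij}_\e$, both boundary conditions $u_\e=\varphi$ and $w_\e=\psi$ on $\p\Omega$, and the favorable sign of the penalization term $\frac{u_\e-\mu_\e}{\e}\chi_{\Omega\setminus\Omega_0}$; this forces $u_\e$ to stay close to $\mu_\e$ on $\Omega\setminus\Omega_0$, and convexity then propagates the lower bound to all of $\Omega$. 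So the correct one-line proof is the citation itself; the barrier/comparison sketch you offer in its place would not survive if you actually had to carry it out.
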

From now on, we fix $\e\leq \e_0$. 
Unless otherwise stated, constants in this section depend
only on $n,\Omega,\Omega_0$, $\varphi,\psi,\eta$, $\dist(\Omega_0,\p\Omega)$, $D_\ast$, and possibly $\e$. For clarity, the dependence on $\e$ will be explicitly indicated.

Before going further, we establish some simple bounds for $f_\e$ in $\Omega$.
\begin{lem}[Estimates for $f_\e$ in $\Omega$]There is a positive constant $\tilde C$ such that
\begin{equation} 
\label{fOm0}
-\tilde C -D_\ast \Delta u_\e \leq f_\e\leq \tilde C\quad
\text{in }\Omega_0.
\end{equation}
\end{lem}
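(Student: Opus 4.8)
The plan is to extract the claimed two-sided bound for $f_\e$ directly from its defining formula in \eqref{sAe2}, restricted to $\Omega_0$. On $\Omega_0$ the characteristic function $\chi_{\Omega_0}$ equals $1$ and $\chi_{\Omega\setminus\Omega_0}$ vanishes, so
\[
f_\e = \frac{\p F^0}{\p z}(x, u_\e) - \sum_{i=1}^n \frac{\p}{\p x_i}\Big(\frac{\p F^1}{\p p_i}(x, Du_\e)\Big)\qquad\text{in }\Omega_0.
\]
First I would expand the divergence term by the chain rule, writing
\[
\sum_{i=1}^n \frac{\p}{\p x_i}\Big(\frac{\p F^1}{\p p_i}(x, Du_\e)\Big) = \sum_{i=1}^n F^1_{p_i x_i}(x, Du_\e) + \sum_{i,j=1}^n F^1_{p_i p_j}(x, Du_\e) D_{ij} u_\e.
\]

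Next I would bound each piece using the structural hypotheses \eqref{F02} and \eqref{F1C} together with the uniform estimate \eqref{ue_Linf}. The term $\frac{\p F^0}{\p z}(x, u_\e)$ is bounded in absolute value by $\eta(|u_\e|) \le \eta(\|u_\e\|_{L^\infty(\Omega)}) \le \eta(C)$ by Proposition \ref{Linf_prop}, which is a constant of the allowed type. The term $\sum_i F^1_{p_i x_i}(x, Du_\e)$ is bounded by $n D_\ast(|Du_\e| + 1)$; since every function in $\bar S[\varphi,\Omega_0]$, and hence $u_\e$ on $\Omega_0$ (by the constraint structure and the boundary condition), is Lipschitz with constant controlled by $\|D\varphi\|_{L^\infty(\Omega)}$, this is again bounded by a constant $\tilde C$ of the allowed type. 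Finally, by the convexity bound $0 \le F^1_{p_ip_j}(x,\pp) \le D_\ast I_n$ in \eqref{F1C} and the convexity of $u_\e$ (so $D^2 u_\e \ge 0$), the quadratic-form term satisfies
\[
0 \le \sum_{i,j=1}^n F^1_{p_i p_j}(x, Du_\e) D_{ij} u_\e \le D_\ast \,\trace(D^2 u_\e) = D_\ast \Delta u_\e.
\]
Combining: $f_\e \le \eta(C) + nD_\ast(\|D\varphi\|_{L^\infty} + 1) =: \tilde C$ from above (dropping the nonnegative quadratic-form term), and $f_\e \ge -\tilde C - D_\ast \Delta u_\e$ from below (keeping the full $-D_\ast\Delta u_\e$ and bounding everything else by $-\tilde C$), after enlarging $\tilde C$ if necessary.

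The only point requiring a little care — the mild "obstacle," though it is routine — is justifying the gradient bound $\|Du_\e\|_{L^\infty(\Omega_0)} \le \|D\varphi\|_{L^\infty(\Omega)}$ (or some explicit constant of the allowed type). This should follow from the fact that $u_\e$ is the restriction to $\Omega_0$ of a convex function on $\Omega$ agreeing with $\varphi$ on $\partial\Omega$ — but here $u_\e$ need not literally lie in $\bar S[\varphi,\Omega_0]$, so instead I would invoke the a priori gradient estimate for $u_\e$ on $\bom$ that is established elsewhere in this section (or cite \cite{LePRS}), noting that it is exactly the kind of estimate already in force once $\e \le \e_0$ is fixed. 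With that in hand, the two displayed inequalities give \eqref{fOm0} immediately, and no further work — in particular no use of the linearized equation \eqref{Uwfe} — is needed.
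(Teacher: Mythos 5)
Your decomposition of $f_\e$ on $\Omega_0$ and the term-by-term bounds (the $F^0_z$ term via \eqref{F02} and Proposition \ref{Linf_prop}, the mixed term $\sum_i F^1_{p_ix_i}$ via \eqref{F1C}, and the quadratic-form term squeezed between $-D_\ast\Delta u_\e$ and $0$ by convexity of $u_\e$) are exactly the paper's argument. The gap is in the one step you yourself flagged: the gradient bound on $\Omega_0$. Your fallback — ``invoke the a priori gradient estimate for $u_\e$ established elsewhere in this section'' — is circular. The gradient estimate \eqref{Dueb} is proved \emph{after} this lemma, by barriers that require the upper bound on $\det D^2 u_\e$ of Lemma \ref{detD2ue_up}; that lemma in turn uses \eqref{fe+}, which is a consequence of the very estimate \eqref{fOm0} you are proving. (Your first idea, Lipschitz bound by $\|D\varphi\|_{L^\infty(\Omega)}$ via membership in $\bar S[\varphi,\Omega_0]$, indeed does not apply to $u_\e$, as you noted; and even if one could import a gradient bound such as \eqref{Dueb}, its constant $M_0(\e)$ depends on $\e$, whereas $\tilde C$ here should not.)

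The correct and elementary fix — which is what the paper does — is the interior gradient estimate for convex functions: since $u_\e$ is convex on $\Omega$ and $\overline{\Omega_0}\subset\Omega$, one has
\begin{equation*}
\|Du_\e\|_{L^{\infty}(\Omega_0)} \leq \frac{2\,\|u_\e\|_{L^{\infty}(\Omega)}}{\dist(\Omega_0,\p\Omega)} \leq \frac{2C}{\dist(\Omega_0,\p\Omega)},
\end{equation*}
with $C$ the ($\e$-independent) constant of \eqref{ue_Linf}. With this substituted for your unavailable citation, the rest of your computation goes through verbatim and yields \eqref{fOm0} with $\tilde C$ depending only on $n,\eta,D_\ast,\dist(\Omega_0,\p\Omega)$ and the constant in \eqref{ue_Linf}, as required.
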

\begin{proof}
From (\ref{ue_Linf}) and the convexity of $u_\e$, we obtain
\[\|Du_\e\|_{L^{\infty}(\Omega_0)} \leq \frac{2 \|u_\e\|_{L^{\infty}(\Omega)}}{\dist(\Omega_0,\p\Omega)}\leq \frac{2C}{\dist(\Omega_0,\p\Omega)}.\]
Thus, in $\Omega_0$, using the convexity of $u_\e$ together with (\ref{F02}) and (\ref{F1C}), one has 
\begin{eqnarray*}f_\e&=& \frac{\p F^0}{\p z}(x, u_\e(x))  -\sum_{i, j=1}^n \frac{\p^2 F^1}{\p p_i \p p_j} (x, Du_\e(x)) D_{ij} u_\e-\sum_{i=1}^n \frac{\p^2 F^1}{\p x_i \p p_i} (x, Du_\e(x))\\ &\leq&  \frac{\p F^0}{\p z}(x, u_\e(x))  -\sum_{i=1}^n \frac{\p^2 F^1}{\p x_i \p p_i} (x, Du_\e(x))\\&\leq& \eta ( \|u_\e\|_{L^{\infty}(\Omega)})  + nD_\ast (\|Du_\e\|_{L^{\infty}(\Omega_0)}+1 ) \leq \tilde C.\end{eqnarray*}
Similarly, for the lower bound in $\Omega_0$, one has
\[f_\e\geq  \frac{\p F^0}{\p z}(x, u_\e(x))  -D_\ast \Delta u_\e-\sum_{i=1}^n \frac{\p^2 F^1}{\p x_i \p p_i} (x, Du_\e(x))\geq -\tilde C -D_\ast \Delta u_\e.\]
Therefore, we obtain (\ref{fOm0}).
\end{proof}
Combining (\ref{ue_Linf})  with (\ref{fOm0}), we get 
\begin{equation}
\label{fe+}
\|f^{+}_{ \e}\|_{L^{\infty}(\Omega)} \leq C_0(\e)\end{equation}
where
\[f_\e^+=\max\{f_\e, 0\}.\]
 Let us denote
\[(D^2 u_\e)^{-1} = (u_\e^{ij}).\]
The following lemma establishes an upper bound for the Hessian determinant of $u_\e$.
\begin{lem}[Upper bound for $\det D^2 u_\e$] 
\label{detD2ue_up}
There is a positive constant $C_1(\e)$ such that
\[\det D^2 u_\e\leq C_1(\e) \quad\text{in }\Omega.\]
\end{lem}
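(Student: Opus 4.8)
The plan is to establish a \emph{lower} bound for $w_\e=(\det D^2 u_\e)^{-1}$, which is equivalent to the claimed upper bound for $\det D^2 u_\e$. The natural tool is the comparison principle for the linearized Monge--Amp\`ere equation $\e\, U_\e^{ij} D_{ij} w_\e = f_\e$, since $U_\e^{ij} D_{ij}$ is a (degenerate) elliptic operator whenever $u_\e$ is uniformly convex (which it is, being a $W^{4,s}(\Omega)$ solution). First I would record that on $\p\Omega$ we have $w_\e = \psi \geq \min_{\p\Omega}\psi > 0$, giving a positive boundary bound. Then I would exploit the one-sided bound on $f_\e$: from \eqref{fe+} we have $f_\e \leq f_\e^+ \leq C_0(\e)$ in $\Omega_0$, and in $\Omega\setminus\Omega_0$ we have $f_\e = (u_\e-\mu_\e)/\e$, which is bounded above by $(\|u_\e\|_{L^\infty(\Omega)} + \|\mu_\e\|_{L^\infty(\Omega)})/\e \leq C(\e)$ using Proposition \ref{Linf_prop} and the explicit form \eqref{mue2} of $\mu_\e$. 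Hence $f_\e \leq \Lambda_0(\e)$ throughout $\Omega$ for a constant $\Lambda_0(\e)$ depending only on the stated data and $\e$.

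The second step is to build an explicit supersolution-type barrier. Since $\Omega$ is bounded, fix $M$ with $\Omega \subset B_M$, and consider $\Phi(x) := a - \frac{b}{2}|x|^2$ (or, more robustly, $\Phi(x) := a + b\,\rho(x)$ using the uniformly convex defining function $\rho$, which vanishes on $\p\Omega$). With $\Phi(x) = a + b\rho(x)$, one computes $\e\, U_\e^{ij} D_{ij}\Phi = \e b\, U_\e^{ij} D_{ij}\rho$. Using the uniform convexity of $\rho$, i.e.\ $D^2\rho \geq c_0 I_n$ for some $c_0>0$ depending only on $\Omega$, and the fact that $U_\e = (\det D^2 u_\e)(D^2 u_\e)^{-1}$ is positive definite, we get $U_\e^{ij} D_{ij}\rho \geq c_0\, \trace(U_\e) \geq c_0\, n\, (\det U_\e)^{1/n} = c_0\, n\, (\det D^2 u_\e)^{(n-1)/n}$ by the arithmetic--geometric mean inequality applied to the eigenvalues of $U_\e$. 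The trouble with this is that the lower bound on $\e\, U_\e^{ij} D_{ij}\Phi$ then depends on $\det D^2 u_\e$ in a way that is not immediately uniform, so I would instead choose $\Phi$ to directly compare with $w_\e$ via the structure $U_\e^{ij} D_{ij} w_\e = f_\e/\e$.

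A cleaner route: set $v := w_\e + K(\e)(e^{\rho} - 1)$ or simply look for a \emph{sub}solution $\underline{w}$ of the equation that lies below $w_\e$ on $\p\Omega$ and satisfies $\e\, U_\e^{ij} D_{ij}\underline{w} \geq \Lambda_0(\e) \geq f_\e$ — then by the comparison principle (noting that $w_\e$ is a supersolution of the corresponding inequality) we would get $w_\e \geq \underline{w} \geq$ (positive constant) in $\Omega$. The standard choice is $\underline{w}(x) = \delta_0 - C(\e)(\text{paraboloid})$; concretely, since $\trace(U_\e) \geq n(\det D^2 u_\e)^{(n-1)/n}$ and $\det D^2 u_\e = w_\e^{-1}$, and since we only need a lower bound, I would instead invoke the known fact (e.g.\ as in \cite{LePRS} or Le's book on the linearized Monge--Amp\`ere equation) that a uniformly convex $W^{4,s}$ solution of the second boundary value problem automatically satisfies $w_\e \geq c(\e) > 0$; but to keep the argument self-contained, the barrier $\underline{w}(x) := \min_{\p\Omega}\psi - A(\e)(|x|^2 - \inf_\Omega |\cdot|^2)$ works once $A(\e)$ is large, because $\e\, U_\e^{ij} D_{ij}\underline{w} = -2\e A(\e)\,\trace(U_\e) \leq 0 \leq$ nothing useful — so in fact $\underline w$ should have the \emph{opposite} sign of concavity. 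The correct barrier is $\underline{w}(x) = \beta + \alpha(\e)\rho(x)$ with $\alpha(\e) > 0$ small: on $\p\Omega$, $\underline w = \beta \leq \min_{\p\Omega}\psi$, and $\e\, U_\e^{ij} D_{ij}\underline w = \e\alpha(\e)\, U_\e^{ij} D_{ij}\rho \geq \e\alpha(\e) c_0 \trace(U_\e) \geq \e \alpha(\e) c_0 n$ (using $\trace(U_\e)\geq n$, valid since $U_\e^{ij}$ has $\det U_\e = (\det D^2 u_\e)^{n-1}$ and $\det D^2 u_\e \geq$ a lower bound already available from \cite{LePRS}, or more simply from an independent lower bound on $\det D^2 u_\e$ that holds prior to this lemma). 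Choosing $\alpha(\e)$ so that $\e\alpha(\e) c_0 n \geq \Lambda_0(\e)$ and $\beta \le \min_{\p\Omega}\psi$ but $\beta + \alpha(\e)\min_\Omega \rho > 0$ makes $\underline w$ a subsolution that is $\le w_\e$ on $\p\Omega$; the comparison principle for $U_\e^{ij} D_{ij}$ then gives $w_\e \geq \underline w \geq c_1(\e) > 0$ in $\Omega$, i.e.\ $\det D^2 u_\e \leq c_1(\e)^{-1} =: C_1(\e)$.

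\textbf{Main obstacle.} The delicate point is ensuring the barrier construction does not secretly require the very bound we are proving. Specifically, the estimate $\trace(U_\e) \geq n(\det D^2 u_\e)^{(n-1)/n}$ is only useful for a \emph{lower} bound on $\trace(U_\e)$ if we already know $\det D^2 u_\e$ is bounded below; and $U_\e^{ij} D_{ij}\rho \geq c_0\trace(U_\e)$ combined with $\det D^2 u_\e = w_\e^{-1}$ could, a priori, degenerate where $w_\e$ is large. I expect the resolution is that a positive lower bound on $\det D^2 u_\e$ (equivalently an upper bound on $w_\e$) is established \emph{before} this lemma — indeed the phrasing ``granted positive lower and upper bounds for $\det D^2 u_\e$ have been established'' in the introduction suggests the lower bound on $\det D^2 u_\e$ comes first via its own barrier (comparing $u_\e$ from below/above with appropriate convex functions and using $f_\e \geq -\tilde C - D_\ast \Delta u_\e$ from \eqref{fOm0}). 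With that lower bound $\det D^2 u_\e \geq c(\e) > 0$ in hand, $\trace(U_\e) \geq n\, c(\e)^{(n-1)/n}$ is genuinely uniform, the barrier $\underline w = \beta + \alpha(\e)\rho$ works, and the comparison principle closes the argument. If instead the lower bound on $\det D^2 u_\e$ is not yet available, I would run the two estimates simultaneously: use $w_\e = \psi$ on $\p\Omega$ and $f_\e \le \Lambda_0(\e)$ to get $w_\e \ge$ positive constant directly from the maximum principle applied to $\e U_\e^{ij} D_{ij} w_\e = f_\e \le \Lambda_0(\e)$ together with a barrier built only from $\rho$ and the bound $\det D^2 u_\e \geq c > 0$ (the latter following from $u_\e = \varphi$ on $\p\Omega$, $\|u_\e\|_{L^\infty} \leq C$, and the lower Monge--Amp\`ere bound that is part of \eqref{pinch1}-type considerations in this setup).
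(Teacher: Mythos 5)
Your first step (the one-sided bound $f_\e\leq \Lambda_0(\e)$ in $\Omega$) matches the paper's \eqref{fOm0}--\eqref{fe+}, but the core of your argument — a comparison barrier $\underline w=\beta+\alpha(\e)\rho$ for the operator $U_\e^{ij}D_{ij}$ — has a genuine gap that your own ``main obstacle'' paragraph does not close. To verify the subsolution inequality $\e\,U_\e^{ij}D_{ij}\underline w\geq f_\e$ you need a positive lower bound for $U_\e^{ij}D_{ij}\rho\geq c_0\,\trace(U_\e)\geq c_0 n(\det D^2u_\e)^{(n-1)/n}=c_0 n\,w_\e^{-(n-1)/n}$, i.e.\ a positive lower bound for $\det D^2u_\e$ (equivalently an upper bound for $w_\e$). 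In this paper that bound is \emph{not} available before the present lemma: it is Lemma \ref{detD2ue_low}, whose proof uses the gradient bound \eqref{Dueb} and the ABP argument, both of which rest on the upper bound for $\det D^2u_\e$ being proved here. So your ``expected resolution'' (lower bound first) reverses the actual logical order and is circular; and the fallback of ``running the two estimates simultaneously'' is not an argument — precisely in the regime $w_\e$ large (not yet excluded), $\trace(U_\e)$ may degenerate like $w_\e^{-(n-1)/n}$, so the inequality $\alpha\,U_\e^{ij}D_{ij}\rho\geq f_\e/\e$ cannot be checked wherever $f_\e>0$, and a pointwise variant at the minimum of $w_\e-\alpha\rho$ also fails in general because the two requirements on $\alpha$ (large enough to beat $\alpha\|\rho\|_{L^\infty}$ by the superlinear gain $\alpha^{n/(n-1)}$, small enough to preserve positivity against $\min_{\p\Omega}\psi$) are incompatible for general data.

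The paper avoids all of this with the Chau--Weinkove device: set $v=\log w_\e-Mu_\e$ and use the operator $u_\e^{ij}D_{ij}$ rather than $U_\e^{ij}D_{ij}$. Since $w_\e\det D^2u_\e=1$, one has $u_\e^{ij}D_{ij}w_\e/w_\e=f_\e/\e$ exactly, the concavity of $\log$ discards the gradient term with the right sign, and $u_\e^{ij}D_{ij}u_\e=n$ identically — no quantitative ellipticity or determinant bound is needed. Thus $u_\e^{ij}D_{ij}v\leq \|f_\e^+\|_{L^\infty(\Omega)}/\e-nM<0$ for $M=1+C_0(\e)/\e$, the minimum principle forces $v\geq\min_{\p\Omega}v\geq\log\min_{\p\Omega}\psi-MC$ (using \eqref{ue_Linf} and $w_\e=\psi$ on $\p\Omega$), and hence $w_\e\geq e^{\log\min_{\p\Omega}\psi-2MC}>0$, i.e.\ $\det D^2u_\e\leq C_1(\e)$. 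The key idea you are missing is to use $u_\e$ itself (through $-Mu_\e$ and the inverse Hessian operator) as the barrier, which is what makes the estimate independent of any lower bound on $\det D^2u_\e$.
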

\begin{proof}
 For this, we use a trick of Chau--Weinkove \cite{CW}. 
Let $$v= \log w_\e-Mu_\e\in W^{2,s}(\Omega)$$
where  $M>0$ is large to be chosen. Then, in $\Omega$, we have
\begin{eqnarray*}u_\e^{ij} D_{ij}v= u_\e^{ij} \left(\frac{D_{ij} w_\e}{w_\e}-\frac{D_i w_\e D_j w_\e}{w_\e^2} -M D_{ij} u_\e\right)&\leq& \frac{u_\e^{ij} D_{ij} w_\e }{w_\e} - nM \\&=& \frac{f_{ \e}/\e}{w_\e \det D^2 u_\e}-n M \\ &\leq&  \frac{\|f^{+}_{ \e}\|_{L^{\infty}(\Omega)}}{\e}-n M  <0,
\end{eqnarray*}
provided $M$ is large. In view of (\ref{fe+}), one can take
\[M=1+ C_0(\e)/\e\geq 1+ \|f^{+}_{ \e}\|_{L^{\infty}(\Omega)}/\e. \]
By the maximum principle, $v$ attains its minimum in $\overline{\Omega}$ on the boundary $\p\Omega$. Thus, in $\Omega$,
\[v \geq \min_{\p\Omega} v\geq \min_{\p\Omega}\log \psi  -M\|u_\e\|_{L^{\infty}(\Omega)} \geq \log \min_{\p\Omega} \psi - MC, \]
where $C$ is the constant in (\ref{ue_Linf}). 
This implies that
\[\log w_\e\geq \log \min_{\p\Omega} \psi - 2MC,\]
so
\[w_\e\geq e^{ \log \min_{\p\Omega} \psi - 2MC} \quad\text{in }\Omega.\]
Therefore, $\det D^2 u_\e= w_\e^{-1}$ is bounded from above by a positive constant $C_1(\e)$.
\end{proof}
Using $u_\e=\varphi$ on $\p\Omega$, and the upper bound for $\det D^2 u_\e$ in Lemma \ref{detD2ue_up}, we can construct suitable barriers to obtain the gradient estimate for $u_\e$:
\begin{equation}
\label{Dueb}
|Du_\e| \leq M_0(\e) \quad \text{in }\Omega.\end{equation}

The following transformation of (\ref{sAe2}), initiated in \cite[Lemma 2.1]{KLWZ}, will be crucially used in our proof of Theorem \ref{RCthm}.
\begin{lem}[Transformed versions of (\ref{sAe2})]
\label{Trlem}
Fix $x_0\in \overline{\Omega}$. Consider the following functions in $\overline{\Omega}$:
\begin{align*}
F^{x_0}_\e (x) &:=\frac{D_\ast|x-Du_\e(x_0)|^2}{2\e},\\
\eta_\e^{x_0}(x) &:= w_\e(x) e^{F^{x_0}_\e (Du_\e(x))},\\
\bb^{x_0}(x)&:=-(\det D^2 u_\e(x))\frac{D_\ast}{\e} (Du_\e(x)-Du_\e(x_0)).
\end{align*}
Then, 
\begin{equation}
\label{etaxeq}
U^{ij}_\e D_{ij} \eta_\e^{x_0}+ \bb^{x_0}(x) \cdot D\eta_\e^{x_0}=\frac{f_\e +D_\ast\Delta u_\e}{\e} e^{F^{x_0}_\e (Du_\e(x))}\quad\text{in }\Omega.
\end{equation}
\end{lem}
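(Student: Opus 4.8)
The plan is to verify \eqref{etaxeq} by a direct computation, starting from the linearized Monge--Amp\`ere equation \eqref{Uwfe} for $w_\e$ and tracking how the exponential weight $e^{F^{x_0}_\e(Du_\e)}$ interacts with the cofactor operator $U^{ij}_\e D_{ij}$. Write $\eta := \eta_\e^{x_0} = w_\e\, e^{\Phi}$ where $\Phi(x) := F^{x_0}_\e(Du_\e(x)) = \tfrac{D_\ast}{2\e}|Du_\e(x)-Du_\e(x_0)|^2$. First I would compute $D_i\eta = e^{\Phi}(D_i w_\e + w_\e D_i\Phi)$ and $D_{ij}\eta = e^{\Phi}\big(D_{ij}w_\e + D_i w_\e D_j\Phi + D_j w_\e D_i\Phi + w_\e D_i\Phi D_j\Phi + w_\e D_{ij}\Phi\big)$, then contract against $U^{ij}_\e$. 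The key structural fact to exploit is that $U^{ij}_\e D_{kj} u_\e = (\det D^2 u_\e)\,\delta_{ik}$, so the cofactor operator pairs very cleanly with any quantity built from third derivatives of $u_\e$ contracted with $D^2 u_\e$.

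The main computation is the term $U^{ij}_\e D_{ij}\Phi$. Since $D_k\Phi = \tfrac{D_\ast}{\e}\sum_m (D_m u_\e - D_m u_\e(x_0)) D_{km}u_\e$, differentiating again gives
\[
D_{ij}\Phi = \frac{D_\ast}{\e}\sum_m (D_m u_\e - D_m u_\e(x_0)) D_{ijm}u_\e + \frac{D_\ast}{\e}\sum_m D_{im}u_\e\, D_{jm}u_\e.
\]
When contracted with $U^{ij}_\e$, the second sum becomes $\tfrac{D_\ast}{\e}\,U^{ij}_\e D_{im}u_\e D_{jm}u_\e = \tfrac{D_\ast}{\e}(\det D^2 u_\e)\sum_m D_{mm}u_\e = \tfrac{D_\ast}{\e}(\det D^2 u_\e)\,\Delta u_\e$, which produces the $D_\ast\Delta u_\e$ contribution on the right-hand side of \eqref{etaxeq}. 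The first sum, involving $U^{ij}_\e D_{ijm}u_\e$, should be handled via the identity $\sum_{i,j}U^{ij}_\e D_{ijm}u_\e = D_m(\det D^2 u_\e) = -D_m(w_\e)/w_\e^2$ (differentiating $\det D^2 u_\e = w_\e^{-1}$), together with $\sum_i U^{ij}_\e D_i w_\e$-type terms; the cross terms $2U^{ij}_\e D_i w_\e D_j\Phi$ and the drift term $\bb^{x_0}\cdot D\eta = e^{\Phi}\,\bb^{x_0}\cdot(Dw_\e + w_\e D\Phi)$ are designed precisely to cancel these. Indeed, $\bb^{x_0}_k = -(\det D^2 u_\e)\tfrac{D_\ast}{\e}(D_k u_\e - D_k u_\e(x_0)) = -(\det D^2 u_\e)\cdot\tfrac{1}{\,\det D^2 u_\e\,}\,U^{ij}_\e D_j\Phi\cdot(\text{appropriate index juggling})$, so $\bb^{x_0}\cdot Dw_\e$ is exactly $-\sum_{i,j}U^{ij}_\e (D_j\Phi) D_i w_\e$, killing one of the two cross terms, while $w_\e\,\bb^{x_0}\cdot D\Phi$ and the remaining cross term combine with the $|D\Phi|^2$ term $w_\e U^{ij}_\e D_i\Phi D_j\Phi$. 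I expect all the "dangerous" third-derivative and quadratic-gradient terms to telescope, leaving only $e^{\Phi}\cdot U^{ij}_\e D_{ij}w_\e / 1$ (which equals $e^{\Phi}f_\e/\e$ by \eqref{Uwfe}) plus $e^{\Phi}\cdot\tfrac{D_\ast}{\e}\Delta u_\e$, which is exactly the claimed right-hand side.

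The main obstacle is purely bookkeeping: organizing the cancellation of the terms containing $D w_\e$ and the third derivatives $D_{ijm}u_\e$, and being careful that $u_\e\in W^{4,s}(\Omega)$ with $s>n$ gives enough classical regularity ($u_\e\in C^{3,\alpha}$ by Sobolev embedding, $w_\e\in W^{2,s}\subset C^{1,\alpha}$) for all these differentiations and the identities $U^{ij}_\e D_{ijm}u_\e = D_m\det D^2 u_\e$ to hold pointwise a.e.\ — which suffices since \eqref{etaxeq} is an a.e.\ statement in $\Omega$. A secondary point worth stating is the divergence-free property of the cofactor matrix, $\sum_i D_i U^{ij}_\e = 0$, which justifies writing $U^{ij}_\e D_{ij}(\cdot)$ in divergence form when needed; but for the pointwise identity above it is cleanest to just expand everything directly and match terms. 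No genuine analytic difficulty arises here — the lemma is an algebraic identity dressed up in derivatives — and the verification is essentially the same as \cite[Lemma 2.1]{KLWZ} with $F^1(x,\pp) = \tfrac{D_\ast}{2}|\pp|^2$ replaced by the general $F^1$ only through the appearance of $D_\ast\Delta u_\e$ in place of $\mathcal{S}u_\e$, so I would model the exposition on that reference.
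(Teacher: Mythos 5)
Your proposal is correct and takes essentially the same route as the paper: the paper simply quotes the exponential-weight identity of \cite[Lemma 2.1]{KLWZ} and then substitutes $DF_\e^{x_0}(Du_\e)=\frac{D_\ast}{\e}(Du_\e-Du_\e(x_0))$, $\div(DF_\e^{x_0}(Du_\e))=\frac{D_\ast}{\e}\Delta u_\e$ and $U^{ij}_\e D_{ij}w_\e=f_\e/\e$, whereas you carry out that same computation directly. Your cancellation scheme does close exactly as you expect (using $U^{ij}_\e D_{jm}u_\e=(\det D^2u_\e)\delta_{im}$, Jacobi's formula $U^{ij}_\e D_{ijm}u_\e=D_m(\det D^2u_\e)$, and $\det D^2u_\e=w_\e^{-1}$), so the only difference is that you reprove the cited identity rather than invoke it.
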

\begin{proof}
From \cite[Lemma 2.1]{KLWZ}, we have in $\Omega$
\begin{equation*}U^{ij}_\e D_{ij} \eta_\e^{x_0}-(\det D^2 u_\e)DF_\e^{x_0} (Du_\e) \cdot D\eta_\e^{x_0}=[U^{ij}_\e D_{ij}w_\e + \div (DF_\e^{x_0}(Du_\e)) ]e^{F^{x_0}_\e (Du_\e(x))}.
\end{equation*}
Since
\[DF_\e^{x_0} (Du_\e) = \frac{D_\ast}{\e} (Du_\e-Du_\e(x_0)),\]
and $U^{ij}_\e D_{ij}w_\e =f_\e/\e$, 
the lemma follows.
\end{proof}
Next, we establish a positive lower bound for the Hessian determinant of $u_\e$.
\begin{lem}[Lower bound for $\det D^2 u_\e$]
\label{detD2ue_low}
There is a positive constant $C_2(\e)$ such that
\[\det D^2 u_\e\geq C^{-1}_2(\e)\quad\text{in }\Omega.\]
\end{lem}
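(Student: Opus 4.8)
The plan is to prove the lower bound $\det D^2 u_\e \geq C_2^{-1}(\e)$ by establishing an \emph{upper} bound for $w_\e = (\det D^2 u_\e)^{-1}$, using the subsolution-type equation \eqref{etaxeq} coming from the transformation in Lemma \ref{Trlem} together with the maximum principle. First I would observe that by \eqref{fOm0} we have $f_\e + D_\ast \Delta u_\e \geq -\tilde C$ in $\Omega_0$, while outside $\Omega_0$ the quantity $f_\e = (u_\e - \mu_\e)/\e$ is bounded below (once we know $\|u_\e\|_{L^\infty}$ is bounded, via Proposition \ref{Linf_prop}, and $\mu_\e$ is controlled), so $f_\e + D_\ast \Delta u_\e \geq -C$ throughout $\Omega$ for some $C = C(\e)$ — note that $\Delta u_\e \geq 0$ by convexity contributes in the favorable direction outside $\Omega_0$. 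Hence the right-hand side of \eqref{etaxeq}, which equals $\frac{f_\e + D_\ast \Delta u_\e}{\e} e^{F^{x_0}_\e(Du_\e)}$, is bounded below by $-C(\e) e^{F^{x_0}_\e(Du_\e)}$; and since $|Du_\e| \leq M_0(\e)$ by \eqref{Dueb}, the exponential factor $e^{F^{x_0}_\e(Du_\e(x))}$ is bounded above and below by positive constants depending on $\e$, uniformly in $x$ and in the choice of $x_0$.

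Next I would fix $x_0 \in \overline\Omega$ to be a point where $w_\e$ attains its maximum over $\overline\Omega$ (which exists since $w_\e \in W^{2,s}(\Omega) \subset C(\overline\Omega)$). Consider $\eta_\e^{x_0}(x) = w_\e(x) e^{F^{x_0}_\e(Du_\e(x))}$. From \eqref{etaxeq} we have
\[
U^{ij}_\e D_{ij} \eta_\e^{x_0} + \bb^{x_0}\cdot D\eta_\e^{x_0} = \frac{f_\e + D_\ast \Delta u_\e}{\e} e^{F^{x_0}_\e(Du_\e)} \geq -\frac{C(\e)}{\e} e^{F^{x_0}_\e(Du_\e)} \geq -C_3(\e) \quad\text{in } \Omega,
\]
so $\eta_\e^{x_0}$ is a supersolution (up to a bounded right-hand side) of the linearized Monge--Amp\`ere operator with drift $\bb^{x_0}$. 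The drift $\bb^{x_0}$ is bounded: $|\bb^{x_0}(x)| \leq (\det D^2 u_\e) \frac{D_\ast}{\e} |Du_\e(x) - Du_\e(x_0)| \leq C_1(\e) \frac{D_\ast}{\e} \cdot 2M_0(\e)$, using Lemma \ref{detD2ue_up} and \eqref{Dueb}. Then the Aleksandrov--Bakelman--Pucci maximum principle for the operator $U^{ij}_\e D_{ij} + \bb^{x_0}\cdot D$ (applicable since $\det(U_\e) = (\det D^2 u_\e)^{n-1}$ is pinched between positive constants depending on $\e$ by Lemmas \ref{detD2ue_up} and \ref{detD2ue_low} — wait, we are proving the latter; so instead I would use the ABP estimate which only needs the ellipticity $U_\e > 0$ and an $L^n$ bound on $\bb^{x_0}$ and on the right-hand side, together with $\det U_\e$ bounded \emph{above}, which follows from Lemma \ref{detD2ue_up}) gives
\[
\sup_{\Omega} \eta_\e^{x_0} \leq \sup_{\p\Omega} \eta_\e^{x_0} + C_4(\e) \bigl\| C_3(\e) \bigr\|_{L^n(\Omega)} \leq \sup_{\p\Omega}\bigl( \psi\, e^{F^{x_0}_\e(Du_\e)}\bigr) + C_5(\e),
\]
where on $\p\Omega$ we used $w_\e = \psi$. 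Since $\psi$ and the exponential are bounded above, the right side is a constant $C_6(\e)$.

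Finally, evaluating at $x = x_0$: because $F^{x_0}_\e(Du_\e(x)) = \frac{D_\ast}{2\e}|x - Du_\e(x_0)|^2$ evaluated at $Du_\e$, at the point $x_0$ this exponent is $\frac{D_\ast}{2\e}|Du_\e(x_0) - Du_\e(x_0)|^2 = 0$, so $\eta_\e^{x_0}(x_0) = w_\e(x_0) \cdot 1 = w_\e(x_0) = \sup_{\overline\Omega} w_\e$. Therefore $\sup_{\overline\Omega} w_\e = \eta_\e^{x_0}(x_0) \leq \sup_\Omega \eta_\e^{x_0} \leq C_6(\e)$, which gives $w_\e \leq C_6(\e)$ in $\Omega$, i.e. $\det D^2 u_\e = w_\e^{-1} \geq C_6(\e)^{-1} =: C_2^{-1}(\e)$ in $\Omega$, as claimed. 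The main obstacle I anticipate is making the ABP/maximum-principle step rigorous without circularly invoking the lower bound we are trying to prove: one must be careful that the ABP constant depends only on quantities already established (diameter of $\Omega$, the $L^n$ norm of the drift, and $(\det U_\e)^{1/n} = (\det D^2 u_\e)^{(n-1)/n}$ bounded \emph{above}), and that the normalization $\eta_\e^{x_0}(x_0) = w_\e(x_0)$ with $x_0$ the maximum point is exactly what converts the boundary bound for $\eta_\e^{x_0}$ into the desired interior bound for $w_\e$.
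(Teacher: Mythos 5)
Your overall route is the same as the paper's: fix $x_0$, use the transformed equation \eqref{etaxeq} from Lemma \ref{Trlem}, bound its right-hand side below by a constant $-D_2(\e)$ using \eqref{fOm0}, \eqref{ue_Linf}, the convexity of $u_\e$ off $\Omega_0$, and the gradient bound \eqref{Dueb}, and then apply the ABP estimate to $\eta_\e^{x_0}$ to bound $\sup_\Omega w_\e$ (taking $x_0$ to be the maximum point of $w_\e$ is an unnecessary refinement: since $F_\e^{x_0}\geq 0$ one has $\eta_\e^{x_0}\geq w_\e$ everywhere, so any fixed $x_0$ works).

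However, your ABP step has a genuine gap. The ABP estimate for $U^{ij}_\e D_{ij}\eta_\e^{x_0}+\bb^{x_0}\cdot D\eta_\e^{x_0}\geq -D_2(\e)$ (see \cite[inequality (9.14)]{GT}) does not give $\sup_\Omega\eta_\e^{x_0}\leq \sup_{\p\Omega}\eta_\e^{x_0}+C_4(\e)\|D_2(\e)\|_{L^n(\Omega)}$: the inhomogeneous term enters weighted by $(\det U_\e)^{-1/n}$, i.e.\ as $\|D_2(\e)/(\det U_\e^{ij})^{1/n}\|_{L^n(\Omega)}$. Your claim that ABP ``only needs $\det U_\e$ bounded above'' is backwards for this term: since $\det U_\e=(\det D^2u_\e)^{n-1}=w_\e^{-(n-1)}$, the weight equals $w_\e^{(n-1)/n}$, whose size is exactly the quantity you are trying to control, so Lemma \ref{detD2ue_up} gives nothing here and the argument as written is circular. (Your observation is correct only for the drift contribution: because $\bb^{x_0}$ carries an explicit factor of $\det D^2u_\e$, one has $|\bb^{x_0}|^n/\det U_\e\leq D_1^n(\e)\det D^2u_\e$, which is controlled by the upper bound of Lemma \ref{detD2ue_up}.) The paper closes this circle by keeping the weight explicit and bounding $\|D_2(\e)(\det U_\e^{ij})^{-1/n}\|_{L^n(\Omega)}\leq D_2(\e)|\Omega|^{1/n}\big(\sup_\Omega w_\e\big)^{(n-1)/n}$, which yields $\sup_\Omega w_\e\leq M_1(\e)+M_3(\e)\big(\sup_\Omega w_\e\big)^{(n-1)/n}$; the sublinear exponent $(n-1)/n<1$ then allows absorption and gives $\sup_\Omega w_\e\leq C_2(\e)$, hence $\det D^2u_\e\geq C_2^{-1}(\e)$. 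Adding this absorption step would repair your proof.
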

\begin{proof} 
We  use the subsolution property of $w_\e$ together with the transformed equation (\ref{etaxeq}) and the Aleksandrov--Bakelman--Pucci (ABP) estimate. 

Fix $x_0\in\Omega$. Let $F_\e^{x_0}$, $\eta_\e^{x_0}$ and  $\bb^{x_0}$ be as in Lemma \ref{Trlem}. Then (\ref{etaxeq}) holds in $\Omega$.

Using (\ref{etaxeq}) together with the first inequality in (\ref{fOm0}) and the uniform estimate for $u_\e$ in (\ref{ue_Linf}), we obtain
\begin{eqnarray}
\label{ueABP}
U^{ij}_\e D_{ij} \eta_\e^{x_0}+ \bb^{x_0}(x) \cdot D\eta_\e^{x_0}&=& \frac{f_\e+ D_\ast  \Delta u_\e}{\e}e^{F^{x_0}_\e (Du_\e(x))}\nonumber\\ &\geq&
\left\{
\begin{alignedat}{2}
-\frac{\tilde C}{\e} e^{F^{x_0}_\e (Du_\e(x))}& \text{ in } \Omega_0\\
\Big[\frac{D_\ast}{\e} \Delta u_\e + \frac{1}{\e^2}\left(u_\e (x)-\mu_\e(x) \right)\Big]e^{F^{x_0}_\e (Du_\e(x))}   & \text{ in } \Omega\backslash \Omega_0
\end{alignedat}
\right.\nonumber\\
&\geq& - D_2(\e). 
\end{eqnarray}

From the ABP estimate  for elliptic, linear equations with drifts (see \cite[inequality (9.14)]{GT}) applied to (\ref{ueABP}),  we find
\begin{multline}
\label{ABPb}
\sup_{x\in \Omega}\eta_\e^{x_0}(x) \leq \sup_{x\in \partial\Omega}\eta_\e^{x_0}(x)\\+ \text{diam}(\Omega)\Bigg\{\exp\Big[\frac{2^{n-2}}{n^n \omega_n} \int_\Omega \Big( 1+ \frac{|\bb^{x_0}|^n}{\det (U_\e^{ij})}\Big)dx \Big]-1\Bigg\}^{1/n}\Big \|\frac{D_2(\e)}{(\det (U_\e^{ij}))^{1/n}}\Big\|_{L^n(\Omega)}
\end{multline}
where $\omega_n$ is the volume of the unit ball $B_1(0)\subset\R^n$.

We now estimate each term appearing in (\ref{ABPb}).  Clearly
\begin{equation}
\label{ABPc}
\sup_{x\in \Omega}\eta_\e^{x_0}(x) \geq \sup_{x\in \Omega} w_\e(x).
\end{equation}
Using (\ref{Dueb}) and $w_\e=\psi$ on $\p\Omega$, we have
\begin{equation}
\label{ABPd}
\sup_{x\in\p \Omega}\eta_\e^{x_0}(x) \leq \sup_{x\in\p \Omega} \psi(x) e^{\frac{2D_\ast \|Du_\e\|^2_{L^{\infty}(\Omega)}}{\e}} \leq M_1(\e).
\end{equation}
Also from (\ref{Dueb}) and the definition of $\bb^{x_0}$, one has
\begin{equation}
\label{bmax}
|\bb^{x_0}(x)| \leq 2\det D^2 u_\e(x) \e^{-1} D_\ast M_0(\e) := D_1(\e)\det D^2 u_\e(x) \quad\text{for all } x\in\overline{\Omega}.
\end{equation}
It follows from (\ref{bmax}) and  Lemma \ref{detD2ue_up} that 
\begin{eqnarray}
\label{ABPe}
 \int_\Omega\Big( 1+ \frac{|\bb^{x_0}(x)|^n}{\det (U_\e^{ij})}\Big)dx&=&  \int_\Omega\Big( 1+ \frac{|\bb^{x_0}(x)|^n}{[\det D^2 u_\e(x)]^{n-1}}\Big)dx \nonumber\\ &\leq& \int_\Omega\Big(1 + D_1^n(\e) \det D^2 u_\e(x)\Big) dx\leq M_2(\e).
\end{eqnarray}
For the last term in (\ref{ABPb}), using \[\det U^{ij}_\e= [\det D^2 u_\e]^{n-1}= w_\e^{-(n-1)},\] we have
\begin{equation}
\label{ABPf}
\Big\|\frac{D_2(\e)}{(\det (U_\e^{ij}))^{1/n}}\Big\|_{L^n(\Omega)} = D_2(\e)\|w_\e^{\frac{n-1}{n}}\|_{L^{n}(\Omega)} \leq D_2(\e)|\Omega|^{\frac{1}{n}} \big(\sup_{x\in \Omega} w_\e(x)\big)^{\frac{n-1}{n}}.\end{equation}
Using (\ref{ABPc}), (\ref{ABPd}), (\ref{ABPe}) and (\ref{ABPf}) in (\ref{ABPb}), we deduce
\[\sup_{x\in \Omega} w_\e(x)\leq M_1(\e) + M_3(\e)  \big(\sup_{x\in \Omega} w_\e(x)\big)^{\frac{n-1}{n}},
\]
for some constant $M_3(\e)>0$.
It follows that $w_\e$ is bounded from above by a constant $C_2(\e)$. Therefore
\[\det D^2 u_\e =w_\e^{-1}\geq C^{-1}_2(\e)\quad\text{in }\Omega.\]
\end{proof}
Now, we prove a H\"older gradient estimate for $u_\e$.
\begin{prop}
[Global $C^{1,\alpha}$ estimates for $u_\e$] 
\label{C1a_prop}
There is $\alpha_0(\e)\in (0, 1)$ such that
\[\|u_\e\|_{C^{1,\alpha_0(\e)}(\Omega)}\leq C_3(\e).\]
\end{prop}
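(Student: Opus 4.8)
The plan is to read off the global $C^{1,\alpha}$ bound from the two-sided control on the Hessian determinant that we now have in hand. Indeed, Lemmas \ref{detD2ue_up} and \ref{detD2ue_low} together give
\[C_2^{-1}(\e)\leq \det D^2 u_\e\leq C_1(\e)\quad\text{in }\Omega,\]
so $u_\e$ solves a non-degenerate Monge--Amp\`ere equation with a right-hand side that is merely bounded between two positive constants, with prescribed (convex, smooth) boundary value $u_\e=\varphi$ on the uniformly convex domain $\Omega$. All the estimates below are to be a priori, i.e. expressed only through the structural quantities $C_1(\e)$, $C_2(\e)$, $\|u_\e\|_{L^\infty(\Omega)}$ and $\|Du_\e\|_{L^\infty(\Omega)}$ (from \eqref{ue_Linf} and \eqref{Dueb}), the geometry of $\Omega$ (in particular its modulus of uniform convexity), and $\|\varphi\|_{C^5(\overline\Omega)}$ --- hence ultimately through $\e$ and the fixed data --- and not through the a priori $W^{4,s}$ norm of $u_\e$ (which is precisely what we are trying to control).

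First I would apply Caffarelli's strict convexity and interior $C^{1,\alpha}$ theory for the Monge--Amp\`ere equation with right-hand side pinched between positive constants: this yields that $u_\e$ is strictly convex in $\Omega$ and that, for each $\Omega'\subset\subset\Omega$, $\|u_\e\|_{C^{1,\alpha}(\overline{\Omega'})}$ is controlled in terms of the pinching constants and $\dist(\Omega',\p\Omega)$. To propagate this up to $\p\Omega$, I would invoke the boundary regularity theory for the Monge--Amp\`ere equation on uniformly convex domains --- Savin's boundary localization property and the boundary H\"older gradient estimates it produces (see also Trudinger--Wang). Since $\Omega$ is smooth and uniformly convex, $\varphi\in C^5(\overline\Omega)$, and $\det D^2 u_\e$ is pinched, these give strict convexity of $u_\e$ up to the boundary together with a $C^{1,\alpha}$ bound in a neighborhood of $\p\Omega$ in $\overline\Omega$. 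A standard finite covering of $\overline\Omega$ then combines the interior and boundary estimates into the desired conclusion, with $\alpha_0(\e)\in(0,1)$ and $C_3(\e)$ as claimed.

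The main obstacle is the boundary behavior. In the interior the pinching of the determinant alone suffices; near $\p\Omega$, however, one must rule out that the graph of $u_\e$ contains a segment with an endpoint on the boundary, and for this the uniform convexity of $\Omega$ and the regularity of $\varphi$ are both essential --- this is exactly the role of Savin's localization theorem at the boundary and of the hypothesis $\varphi\in C^5$. A secondary, purely bookkeeping matter is to track how $\alpha_0$ and $C_3$ degenerate as $\e\to 0$ through $C_1(\e)$ and $C_2(\e)$; and since at this stage $w_\e=(\det D^2 u_\e)^{-1}$ is only known to be bounded (not yet H\"older continuous with a uniform estimate), $C^{1,\alpha}$ is the best one can extract here --- the upgrade to $C^{2,\alpha}$ and beyond will come later, once $w_\e$ has been shown to be H\"older continuous via the twisted Harnack inequality of Theorem \ref{HI_thm}.
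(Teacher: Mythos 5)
Your proposal is correct and follows essentially the same route as the paper: both deduce the global $C^{1,\alpha_0(\e)}$ bound from the two-sided pinching $C_2^{-1}(\e)\leq \det D^2 u_\e\leq C_1(\e)$ of Lemmas \ref{detD2ue_up} and \ref{detD2ue_low}, together with the boundary condition $u_\e=\varphi$ on the smooth, uniformly convex domain $\Omega$. The only difference is one of packaging: the paper invokes the global $C^{1,\alpha}$ estimate of Le--Savin \cite[Proposition 2.6]{LS} in a single step, whereas you reassemble that result from its ingredients (Caffarelli's interior theory plus Savin's boundary localization), which is a faithful sketch of the same argument.
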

\begin{proof} By Lemmas \ref{detD2ue_up} and \ref{detD2ue_low}, we have
\[C_2(\e)^{-1}\leq \det D^2 u_\e \leq C_1(\e).\] Using these bounds, $u_\e=\varphi$ on $\p\Omega$, and the global $C^{1,\alpha}$ estimates for the Monge--Amp\`ere equation in \cite[Proposition 2.6]{LS}, we obtain the desired global $C^{1,\alpha_0(\e)}$ estimates for $u_\e$.
\end{proof}

The rest of this section is devoted regularity estimates for $w_\e$. 

For the H\"older continuity estimates for $w_\e$ at the boundary, we will use the following one-sided H\"older estimates for solutions to non-uniformly elliptic, linear equations.
\begin{prop}[One-sided pointwise H\"older estimate at the boundary for solutions to non-uniformly elliptic, linear equations with pointwise H\"older continuous drift]\label{sidedH}
Assume that $\Omega\subset\R^n$ is a bounded, uniformly convex domain. Let
$\varphi\in C^{\alpha}(\p\Omega)$ for some $\alpha\in (0, 1)$, and $g\in L^n(\Omega)$. Assume that
the matrix $(a^{ij})$ is  measurable, positive definite and satisfies $\det (a^{ij})\geq \lambda$ in $\Omega$. Let $\bb\in L^{\infty}(\Omega;\mathbb R^n)$.
Let $v\in C(\overline{\Omega})\cap W^{2, n}_{\mathrm{loc}}(\Omega)$ be a function satisfying
$$a^{ij} D_{ij}v + \bb\cdot Dv~\geq -|g| ~\text{in} ~\Omega, \quad
 v= \varphi ~\text{on}~ \p\Omega.
 $$
 Suppose there are constants $\mu,\tau\in (0, 1)$, and $M>0$ such that  at some $x_0\in\p\Omega$, we have
 \begin{equation*}
 |\bb (x)|\leq M|x-x_0|^\mu\quad \text{for all } x\in\Omega\cap B_{\tau}(x_0).\end{equation*}
Then, there exist $\delta, C$ depending only on $\lambda, n, \alpha,\mu, \tau, M$, $\|\bb\|_{L^{\infty}(\Omega)}$, and $\Omega$ such that
\begin{equation*}v(x)-v(x_{0})\leq C|x-x_{0}|^{\frac{\min\{\alpha,\mu\}}{\min\{\alpha,\mu\} +4}}\left(\|\varphi\|_{C^{\alpha}(\p\Omega)} + \|g\|_{L^{n}(\Omega)}\right)~\text{for all}~ x\in \Omega\cap B_{\delta}(x_{0}). \end{equation*}
\end{prop}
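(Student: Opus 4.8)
The plan is to reduce the non-uniformly elliptic equation to a uniformly elliptic one by a localization-and-rescaling argument, and then run a barrier construction comparing $v$ to a suitable supersolution. First I would note that since $\Omega$ is uniformly convex, after a rotation we may place $x_0$ at the origin with $\Omega\subset \{x_n>0\}$ locally and $e_n$ the inner normal; uniform convexity gives an interior paraboloid touching $\p\Omega$ at $x_0$, so $\Omega\cap B_\tau(x_0)$ lies above a graph $x_n\geq \kappa|x'|^2$ for some $\kappa>0$ depending only on $\Omega$. The strategy is standard for equations of this type (it is the ABP-based boundary Hölder technique used in \cite{LS, LeCCM}): one works at a dyadic scale $r=2^{-k}$, considers the half-ball-type region $\Omega\cap B_r$, and builds a barrier $\beta_r$ with $\beta_r(x_0)=0$ that dominates $v-v(x_0)$ on the parabolic-type boundary and satisfies $a^{ij}D_{ij}\beta_r + \bb\cdot D\beta_r \le -|g|$ in $\Omega\cap B_r$ in the viscosity/a.e.\ sense, so that the ABP estimate of Proposition-type form (\cite[(9.14)]{GT}) yields $v(x)-v(x_0)\le \beta_r(x)$ in a smaller region.

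The key steps, in order. (1) \emph{Barrier at a fixed scale.} Fix $r\le\tau$. Seek a barrier of the form
\begin{equation*}
\beta_r(x)= A\big(\|\varphi\|_{C^\alpha(\p\Omega)}+\|g\|_{L^n(\Omega)}\big)\Big( |x|^\theta + N\, x_n - N\kappa|x'|^2\Big)
\end{equation*}
for an exponent $\theta\in(0,1)$ and constants $A,N$ to be chosen. The term $|x|^\theta$ controls the Hölder oscillation of the boundary data $\varphi$ (it is $\ge |\varphi(x)-\varphi(x_0)|$ on $\p\Omega\cap B_r$ once $\theta\le\alpha$), the linear term $Nx_n$ provides room, and $-N\kappa|x'|^2$ is nonnegative on the portion of $\p\Omega$ near $x_0$ by uniform convexity, so $\beta_r\ge v-v(x_0)$ on $\p(\Omega\cap B_r)$ provided $A$ and $N$ are large relative to $\|\varphi\|_{C^\alpha}$ and $\sup_{B_r}|v|$. (2) \emph{Supersolution check.} Computing $a^{ij}D_{ij}\beta_r$: the concave term $-N\kappa|x'|^2$ contributes $-2N\kappa\,\mathrm{trace}'(a^{ij})$, which is strictly negative and, crucially, only requires $\det(a^{ij})\ge\lambda$ together with positivity — by the arithmetic–geometric inequality, $\mathrm{trace}(a^{ij})\ge n(\det a^{ij})^{1/n}\cdot(\text{something})$; more precisely one uses that $a^{ij}\xi_i\xi_j \ge \lambda/(\text{trace})^{n-1}$, i.e.\ the smallest eigenvalue is bounded below in terms of $\lambda$ and the largest, which is the standard mechanism for squeezing ellipticity out of a determinant bound. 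The term $a^{ij}D_{ij}(|x|^\theta)$ is $O(|x|^{\theta-2}\mathrm{trace}(a^{ij}))$ and has an indefinite sign, but near $x_0$ it is dominated by the good negative term if we also incorporate the geometry; and the drift $\bb\cdot D\beta_r$ is controlled using $|\bb(x)|\le M|x|^\mu$ near $x_0$ together with $|D\beta_r|\lesssim |x|^{\theta-1}+N$. Balancing these forces the scaling exponent: one is led to $\theta$ comparable to $\min\{\alpha,\mu\}$ and, after iterating over dyadic scales (each scale improving the oscillation of $v$ by a fixed factor via the De Giorgi–type geometric decay), to the final exponent $\frac{\min\{\alpha,\mu\}}{\min\{\alpha,\mu\}+4}$ — the "$+4$" being the usual loss coming from the quadratic boundary geometry ($x_n\sim |x'|^2$) interacting with the sub-quadratic growth $|x|^\theta$, exactly as in \cite[Proposition 2.6]{LS} and \cite[Proposition 2.1]{LeCCM}. (3) \emph{Iteration.} Having the one-scale estimate $\mathrm{osc}_{\Omega\cap B_{r/2}}(v-v(x_0))\le \sigma\,\mathrm{osc}_{\Omega\cap B_r}(v-v(x_0)) + Cr^\theta(\|\varphi\|_{C^\alpha}+\|g\|_{L^n})$ with $\sigma<1$, a standard iteration lemma yields the claimed pointwise Hölder bound with $\delta$ and $C$ depending only on $\lambda,n,\alpha,\mu,\tau,M,\|\bb\|_{L^\infty}$, and $\Omega$ (through $\kappa$).

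The main obstacle I expect is step (2): making the barrier computation work with \emph{only} a lower bound $\det(a^{ij})\ge\lambda$ and no upper bound on $\mathrm{trace}(a^{ij})$. One cannot control $a^{ij}D_{ij}(|x|^\theta)$ pointwise since the largest eigenvalue of $(a^{ij})$ is unbounded; the resolution — as in the Caffarelli–Gutiérrez and Le–Savin boundary theory — is that the dangerous positive part of $a^{ij}D_{ij}(|x|^\theta)$ occurs only in directions where $D^2(|x|^\theta)$ is positive, and there it is bounded by $\mathrm{trace}(a^{ij})\cdot |x|^{\theta-2}$, which is then absorbed by the strictly negative $-N\kappa\,\mathrm{trace}(a^{ij})$ term once $N$ is chosen large and $|x|$ small (so that $N\kappa \gg |x|^{\theta-2}\cdot$const fails — rather, one rescales so that at the working scale the quadratic term wins). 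Equivalently, after the parabolic rescaling $x'\mapsto x'/\sqrt r$, $x_n\mapsto x_n/r$ that flattens the boundary, the operator becomes uniformly elliptic \emph{at that scale} with constants depending only on $\lambda,n$, and the drift picks up a controllable factor; the whole non-uniformity is thereby confined to the rescaling bookkeeping, which is where the exponent $\min\{\alpha,\mu\}/(\min\{\alpha,\mu\}+4)$ is produced. The pointwise (rather than global $C^\alpha$) nature of the hypothesis on $\bb$ is precisely what makes this localization legitimate and is the only reason we get a one-sided pointwise estimate rather than a two-sided interior one.
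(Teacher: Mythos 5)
There is a genuine gap, and it sits exactly where you flag your own worry: the term $|x|^\theta$ in the barrier is incompatible with the hypothesis that only $\det(a^{ij})\ge\lambda$ is available. The Hessian of $|x-x_0|^\theta$ has positive eigenvalues of size $\theta|x-x_0|^{\theta-2}$ in the directions orthogonal to $x-x_0$, and since no upper bound on the eigenvalues (or the trace) of $(a^{ij})$ is assumed, $a^{ij}D_{ij}(|x-x_0|^\theta)$ can be as large as $\Lambda_{\max}\,\theta|x-x_0|^{\theta-2}$ with $\Lambda_{\max}$ arbitrary. Your proposed absorption by the concave piece, $-N\kappa\,\mathrm{trace}(a^{ij})$, would require $2N\kappa\ge \theta|x-x_0|^{\theta-2}$, which fails precisely as $x\to x_0$, i.e.\ exactly where the estimate is needed; and the fallback claim that a parabolic rescaling $x'\mapsto x'/\sqrt r$, $x_n\mapsto x_n/r$ makes the operator ``uniformly elliptic at that scale with constants depending only on $\lambda,n$'' is false: a linear change of variables rescales the determinant but gives no upper bound on eigenvalues in any coordinates, so the non-uniform ellipticity cannot be confined to ``bookkeeping.'' A second problem is the iteration step: the claimed oscillation decay $\mathrm{osc}_{B_{r/2}}\le\sigma\,\mathrm{osc}_{B_r}+Cr^\theta(\cdots)$ is a two-sided statement, whereas the hypothesis on $v$ is one-sided (a subsolution inequality only), with no lower control of $v$; no De Giorgi-type measure or Harnack estimate is available for these operators under a one-sided determinant bound, so this iteration cannot be run. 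Also, demanding $a^{ij}D_{ij}\beta_r+\bb\cdot D\beta_r\le-|g|$ a.e.\ is impossible for $g\in L^n$ unbounded; the $g$-term must be handled through the ABP estimate (which you do mention), not through the barrier's pointwise sign.

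The mechanism that actually works (this is the content of Proposition 3.1 in \cite{KLWZ}, which the paper's proof simply invokes by inspection) uses only barriers that are linear plus concave, so that the second-order part never sees an uncontrolled positive direction. Place $x_0$ at the origin with inner normal $e_n$; uniform convexity gives $x_n\ge |x|^2/(2R)$ on $\Omega$. On $\Omega\cap B_\rho(x_0)$ compare $v$ with $w=\varphi(x_0)+\|\varphi\|_{C^\alpha}\rho^\alpha+C_1\rho^{-2}x_n+K(\rho^2-|x-x_0|^2)$: the linear term is annihilated by $a^{ij}D_{ij}$ and dominates $v$ on $\partial B_\rho\cap\Omega$ because $x_n\gtrsim\rho^2$ there (an upper bound for $\sup_\Omega v$ comes from ABP on $\Omega$); the concave correction satisfies $a^{ij}D_{ij}\bigl(-K|x-x_0|^2\bigr)\le -2Kn\lambda^{1/n}$ by the arithmetic--geometric mean inequality, using only $\det(a^{ij})\ge\lambda$, and is sized using the pointwise bound $|\bb|\le M|x-x_0|^\mu$ to absorb the drift acting on the linear term; the $L^n$ right-hand side enters through ABP on $\Omega\cap B_\rho$ with a factor $\mathrm{diam}\sim\rho$. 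Optimizing the free scale $\rho$ against $|x-x_0|$ in a single comparison (no iteration) produces a H\"older exponent of the form $\min\{\alpha,\mu\}/(\min\{\alpha,\mu\}+c)$. So while your overall strategy (boundary barrier plus ABP, exploiting uniform convexity and the pointwise drift bound) is in the right family, the specific barrier and the rescaling/iteration steps as written would not survive the absence of an upper ellipticity bound, and the proof needs to be rebuilt around linear-plus-concave comparison functions as above.
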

\begin{proof} The proof follows from inspecting the proof of Proposition 3.1 in \cite{KLWZ}.
\end{proof}

With the help of Proposition \ref{sidedH} and invoking the key transformations in Lemma \ref{Trlem}, we can now establish the H\"older continuity estimates for $w_\e$ at the boundary.
\begin{prop}[H\"older continuity estimates for $w_\e$ at the boundary] 
\label{bdrw}
There are $\alpha_1(\e)\in (0, 1)$ and $C_4(\e)>0$ such that for all $x_0\in\p\Omega$, we have
\[|w_\e(x) -w_\e(x_0)|\leq C_4(\e)|x-x_0|^{\alpha_1(\e)}\quad\text{for all } x\in \overline{\Omega}\cap B_{C_4^{-1}(\e)}(x_0).\]
\end{prop}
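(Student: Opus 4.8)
The plan is to establish the boundary Hölder estimate for $w_\e$ by splitting it into a one-sided estimate from above and a one-sided estimate from below, each obtained by applying Proposition \ref{sidedH} to a suitable linear equation with pointwise Hölder (in fact Lipschitz-type) drift near the boundary point. The crucial point is that $w_\e$ itself does not solve a nice equation at the boundary because of the singular right-hand side $f_\e$; instead we pass to the transformed quantities of Lemma \ref{Trlem}. Fix $x_0\in\p\Omega$. For the \emph{upper bound}, recall that by (\ref{ueABP}) the transformed function $\eta_\e^{x_0}(x)=w_\e(x)e^{F_\e^{x_0}(Du_\e(x))}$ satisfies $U_\e^{ij}D_{ij}\eta_\e^{x_0}+\bb^{x_0}\cdot D\eta_\e^{x_0}\geq -D_2(\e)$ in $\Omega$, with $\eta_\e^{x_0}=\psi e^{F_\e^{x_0}(D\varphi)}$ on $\p\Omega$. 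Since $\det U_\e^{ij}=(\det D^2u_\e)^{n-1}$ is bounded below by $C_2(\e)^{1-n}$ thanks to Lemma \ref{detD2ue_low}, the matrix $(U_\e^{ij})$ satisfies the determinant lower bound hypothesis of Proposition \ref{sidedH}, and $\bb^{x_0}$ is globally bounded by (\ref{bmax}) together with Lemma \ref{detD2ue_up}. Moreover, by the definition $\bb^{x_0}(x)=-(\det D^2u_\e(x))\tfrac{D_\ast}{\e}(Du_\e(x)-Du_\e(x_0))$ and the global $C^{1,\alpha_0(\e)}$ estimate for $u_\e$ from Proposition \ref{C1a_prop}, we get the pointwise decay $|\bb^{x_0}(x)|\leq M(\e)|x-x_0|^{\alpha_0(\e)}$ near $x_0$. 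Hence Proposition \ref{sidedH} applies with $\mu=\alpha_0(\e)$ and with boundary data of Hölder exponent dictated by the $C^{1,\alpha_0(\e)}$ (indeed $C^3$, so Lipschitz suffices) regularity of $\psi$ and $D\varphi$, yielding
\[
\eta_\e^{x_0}(x)-\eta_\e^{x_0}(x_0)\leq C(\e)|x-x_0|^{\beta(\e)}
\]
for $x\in\Omega\cap B_{\delta(\e)}(x_0)$, where $\beta(\e)=\tfrac{\alpha_0(\e)}{\alpha_0(\e)+4}$ (taking the Hölder exponent of the data to be at least $\alpha_0(\e)$). Since $e^{F_\e^{x_0}(Du_\e(x))}$ is bounded below by $1$ and above by a constant depending on $\e$ (using (\ref{Dueb})), and is itself Hölder continuous in $x$ with the same exponent $\alpha_0(\e)$ (again by Proposition \ref{C1a_prop}), one can undo the multiplicative factor: $w_\e(x)=\eta_\e^{x_0}(x)e^{-F_\e^{x_0}(Du_\e(x))}$, and a short computation comparing $w_\e(x)$ with $w_\e(x_0)=\psi(x_0)$ gives the one-sided bound $w_\e(x)-w_\e(x_0)\leq C_4(\e)|x-x_0|^{\alpha_1(\e)}$.

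For the \emph{lower bound} on $w_\e$, i.e. an estimate $w_\e(x_0)-w_\e(x)\leq C_4(\e)|x-x_0|^{\alpha_1(\e)}$, we use instead the supersolution property of $w_\e$ for the operator $U_\e^{ij}D_{ij}$: by (\ref{Uwfe}) and (\ref{fe+}), $U_\e^{ij}D_{ij}w_\e=f_\e/\e\leq C_0(\e)/\e$ in $\Omega$, so $-w_\e$ is a subsolution, $U_\e^{ij}D_{ij}(-w_\e)\geq -C_0(\e)/\e$, with no drift at all and with $(-w_\e)=-\psi$ on $\p\Omega$. Applying Proposition \ref{sidedH} to $-w_\e$ with $\bb\equiv 0$ (so the pointwise decay hypothesis on the drift is trivially satisfied with any $\mu$), the determinant bound $\det U_\e^{ij}\geq C_2(\e)^{1-n}$, boundary data $-\psi\in C^3\subset C^{\alpha}$, and $g\equiv C_0(\e)/\e\in L^n(\Omega)$, we obtain directly
\[
-w_\e(x)-(-w_\e(x_0))\leq C(\e)|x-x_0|^{\alpha/(\alpha+4)}\bigl(\|\psi\|_{C^{\alpha}(\p\Omega)}+\|g\|_{L^n(\Omega)}\bigr)
\]
for $x\in\Omega\cap B_{\delta(\e)}(x_0)$, which is exactly the desired one-sided lower estimate. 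Combining the two one-sided estimates, taking $\alpha_1(\e)$ to be the smaller of the two exponents and enlarging $C_4(\e)$ appropriately, and shrinking the radius to $C_4^{-1}(\e)$, completes the proof.

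The main obstacle is the first half: one cannot work with $w_\e$ directly at the boundary for the upper bound because the right-hand side $f_\e$ contains the uncontrolled singular term $-D_\ast\Delta u_\e$ (only bounded above, not below, by (\ref{fOm0})), so $w_\e$ is not a subsolution of any equation with $L^n$ right-hand side near $\p\Omega$. The transformation of Lemma \ref{Trlem} is precisely what converts this defect into a genuine subsolution inequality (\ref{ueABP}) with bounded right-hand side $-D_2(\e)$, at the price of the drift $\bb^{x_0}$; verifying that this drift has the required pointwise Hölder decay $|\bb^{x_0}(x)|\lesssim |x-x_0|^{\alpha_0(\e)}$ at $x_0$ — which hinges on the global $C^{1,\alpha_0(\e)}$ estimate for $u_\e$ and the fact that $Du_\e(x_0)$ is the base point of the transformation — and that the exponential weight $e^{F_\e^{x_0}(Du_\e)}$ is a harmless positive Hölder factor, are the two points requiring care. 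Everything else reduces to bookkeeping of $\e$-dependent constants via Lemmas \ref{detD2ue_up}, \ref{detD2ue_low} and Proposition \ref{C1a_prop}.
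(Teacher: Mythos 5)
Your proposal is correct and follows essentially the same route as the paper: upper bound for $w_\e(x)-w_\e(x_0)$ via the transformed subsolution $\eta_\e^{x_0}$ of Lemma \ref{Trlem} with the drift decay $|\bb^{x_0}(x)|\lesssim_\e|x-x_0|^{\alpha_0(\e)}$ and Proposition \ref{sidedH}, then undoing the Hölder exponential factor; lower bound by applying Proposition \ref{sidedH} directly to $-w_\e$, which satisfies $U_\e^{ij}D_{ij}(-w_\e)\geq -C_0(\e)/\e$ with no drift. The only minor slip is writing the boundary data of $\eta_\e^{x_0}$ as $\psi e^{F_\e^{x_0}(D\varphi)}$ (on $\p\Omega$ only the tangential part of $Du_\e$ agrees with $D\varphi$), but this is harmless since the required Hölder continuity of the boundary data follows from the global $C^{1,\alpha_0(\e)}$ estimate of Proposition \ref{C1a_prop}.
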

\begin{proof}
Fix $x_0\in \p\Omega$.  Let $F_\e^{x_0}$, $\eta_\e^{x_0}$ and  $\bb^{x_0}$ be as in Lemma \ref{Trlem}. Then (\ref{etaxeq}) holds in $\Omega$.

Recall from (\ref{ueABP}) that
\begin{equation*}
U^{ij}_\e D_{ij} \eta_\e^{x_0} + \bb^{x_0} \cdot D\eta_\e^{x_0}\geq 
 - D_2(\e)\quad\text{in }\Omega. 
\end{equation*}
By Lemma \ref{detD2ue_up} and Proposition \ref{C1a_prop}, the vector field
\[\bb^{x_0}(x):=-(\det D^2 u_\e) \e^{-1}D_\ast (Du_\e(x)-Du_\e(x_0))  \]
satisfies
\[|\bb^{x_0}(x)|\leq \e^{-1}D_\ast C_1(\e) C_3(\e)|x-x_0|^{\alpha_0(\e)} \quad \text{in }\overline{\Omega}.\]
Now, applying Proposition \ref{sidedH}, we have for some $\mu_0(\e)\in (0, 1)$
\begin{equation}
\label{up_eta}
\eta_\e^{x_0}(x)-\eta_\e^{x_0}(x_0) \leq \tilde C(\e)|x-x_0|^{\mu_0(\e)} \quad\text{for all } x\in \overline{\Omega}\cap B_{\tilde C^{-1}(\e)}(x_0).
\end{equation}
Recalling 
$w_\e(x) = \eta_\e^{x_0}(x) e^{-F^{x_0}_\e (Du_\e(x))}$, we have 
\begin{multline*}w_\e(x)-w_\e(x_0) =(\eta_\e^{x_0}(x)-\eta_\e^{x_0}(x_0)) e^{-F^{x_0}_\e (Du_\e(x))} \\+ \eta_\e^{x_0}(x_0) e^{-F^{x_0}_\e (Du_\e(x_0))} (e^{F^{x_0}_\e (Du_\e(x))-F^{x_0}_\e (Du_\e(x_0))}-1).\end{multline*}
Thus, from (\ref{up_eta}) and Proposition \ref{C1a_prop}, 
we obtain one-sided H\"older estimates for $w_\e$ at $x_0$:
\[w_\e(x)-w_\e(x_0)  \leq \hat C(\e) |x-x_0|^{\mu_1(\e)} \quad\text{for all } x\in \overline{\Omega}\cap B_{\hat C^{-1}(\e)}(x_0) \]
where \[\mu_1 (\e)=\min\{\mu_0(\e),\alpha_0(\e)\}.\]
On the other hand, recalling (\ref{fe+}), we also have
\[U^{ij}_\e D_{ij}(-w_\e) =\frac{-f_\e}{\e}\geq \frac{-C_0(\e)}{\e}=-D_3(\e).\]
Applying Proposition \ref{sidedH}, we have the other one-sided H\"older estimates for $w_\e$ at $x_0$:
\[w_\e(x)-w_\e(x_0)  \geq \check C(\e) |x-x_0|^{\mu_2(\e)} \quad\text{for all } x\in \overline{\Omega}\cap B_{\check C^{-1}(\e)}(x_0)\]
for some $\mu_2(\e)\in (0, 1)$.

In conclusion, $w_\e$ is pointwise H\"older continuous at $x_0$ with stated estimate for $C_4(\e)=\max\{\hat C(\e), \check C(\e)\}$.
\end{proof}

Using the twisted Harnack inequality in Theorem \ref{HI_thm}, we will establish interior H\"older estimates for $w_\e$.
\begin{prop}[Interior H\"older estimates for $w_\e$] 
\label{intw}
Let $\Omega'\subset\subset\Omega$. Then,
there is an exponent $\alpha_2(\e)\in (0, 1)$, depending also on $\text{dist}(\Omega',\p\Omega)$, such that $w_\e\in C^{\alpha_2(\e)}(\Omega')$.
\end{prop}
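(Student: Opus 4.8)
The plan is to verify that $w_\e$ satisfies, locally in $\Omega$, all the hypotheses of the twisted Harnack inequality (Theorem \ref{HI_thm}), so that the stated interior H\"older continuity is its immediate ``in particular'' conclusion. By Lemmas \ref{detD2ue_up} and \ref{detD2ue_low}, we have $C_2(\e)^{-1}\leq \det D^2 u_\e\leq C_1(\e)$ in $\Omega$, so (\ref{pinch1}) holds with $\lambda=\lambda(\e)$, $\Lambda=\Lambda(\e)$; with $A=U_\e$ we have $\tl=\tL=1$, so (\ref{AlamU}) holds trivially. Fix $\Omega'\subset\subset\Omega$. Standard Monge--Amp\`ere section geometry (using $C_2(\e)^{-1}\leq \det D^2 u_\e\leq C_1(\e)$, the bound (\ref{Dueb}) on $|Du_\e|$, and $\dist(\Omega',\p\Omega)$) gives a fixed height $\bar t=\bar t(\e,\Omega')$ and radius-ratio $R=R(\e,\Omega')$ so that every point of $\Omega'$ lies in a section $S_u(\bar x,\bar t/2)$ with $S_u(\bar x,\bar t)\subset\subset\Omega$ and $B_{R^{-1}}(z)\subset S_u(\bar x,\bar t)\subset B_R(z)$; then one covers $\Omega'$ by finitely many small sections $S=S_u(x_\ast,h)$ with $h\leq h_0$, $S\subset S_u(\bar x,\bar t/2)$, as required by Theorem \ref{HI_thm}.

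Next I would check the two structural conditions on $v:=w_\e\geq 0$. For the supersolution property (\ref{sup_hr}): from $U_\e^{ij}D_{ij}w_\e=f_\e/\e$ and the upper bound $f_\e\leq \tilde C$ on $\Omega_0$ (first part of (\ref{fOm0})) together with $f_\e=(u_\e-\mu_\e)/\e$ on $\Omega\setminus\Omega_0$ — which is bounded in absolute value by $C(\e)$ using (\ref{ue_Linf}) and the explicit form (\ref{mue2}) of $\mu_\e$ — we get $U_\e^{ij}D_{ij}w_\e\leq C(\e)=:|f|$ in $\Omega$ with $\bb\equiv 0$, $f\in L^\infty(S)\subset L^n(S)$. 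For the twisted condition (\ref{sub_hzr}): here is the crucial use of Lemma \ref{Trlem}. For each $z\in S$, set $x_0:=z$ and take $G^z(x):=e^{F^z_\e(Du_\e(x))-F^z_\e(Du_\e(z))}$, so $G^z(z)=1$ and $\eta_\e^{z}=w_\e\,G^z\cdot e^{F^z_\e(Du_\e(z))}$ is, up to the positive constant $e^{F^z_\e(Du_\e(z))}$, exactly the twisted function $\eta^z=v G^z$. By (\ref{etaxeq}) and the lower bound $f_\e+D_\ast\Delta u_\e\geq -\tilde C$ on $\Omega_0$ (first part of (\ref{fOm0})), together with the computation (\ref{ueABP}) showing the right-hand side of (\ref{etaxeq}) is $\geq -D_2(\e)$ on all of $\Omega$, we obtain $U_\e^{ij}D_{ij}\eta^z+\bb^{z}\cdot D\eta^z\geq -D_2(\e)e^{F^z_\e(Du_\e(z))}=:-|f^z|$, with $\bb^z:=\bb^{x_0}$ from Lemma \ref{Trlem}.

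It remains to verify the uniform bound (\ref{bbzinfty}). Since $\bb\equiv 0$, only the last two terms matter. The drift bound $\sup_z\|\bb^z\|_{L^\infty(S)}\leq K(\e)$ follows from (\ref{bmax}) and Lemma \ref{detD2ue_up} (i.e.\ $|\bb^{x_0}|\leq D_1(\e)\det D^2 u_\e\leq D_1(\e)C_1(\e)$). The H\"older seminorm of $G^z$: writing $G^z(x)-G^z(z)$ and using that $F^z_\e(\pp)=D_\ast|\pp-Du_\e(z)|^2/(2\e)$ is locally Lipschitz on the bounded range of $Du_\e$ (by (\ref{Dueb})), it suffices that $x\mapsto Du_\e(x)$ is H\"older continuous on $\overline S$ — which is exactly Proposition \ref{C1a_prop} with exponent $\alpha_0(\e)$; composing gives $|G^z(x)-G^z(z)|\leq C(\e)|x-z|^{\alpha_0(\e)}$ uniformly in $z$, and $G^z\in W^{2,n}_{\loc}(S)$ since $u_\e\in W^{4,s}(\Omega)$, $s>n$, makes $Du_\e\in W^{3,s}_{\loc}$ hence $G^z$ at least $W^{2,n}_{\loc}$. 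This establishes (\ref{bbzinfty}) with $\gamma=\alpha_0(\e)$ and some $K=K(\e)$. Now Theorem \ref{HI_thm} applies on each small section in the finite cover, yielding the interior Harnack inequality (\ref{HI_v}) for $w_\e$ on sections, and with it a uniform interior H\"older estimate with exponent $\alpha_2(\e)\in(0,1)$ depending on $n$, the ($\e$-dependent) ellipticity constants, $R$, $\gamma=\alpha_0(\e)$, $K(\e)$, and hence also on $\dist(\Omega',\p\Omega)$. The main obstacle is purely bookkeeping: confirming that the section-geometry hypotheses of Theorem \ref{HI_thm} (the inclusion $S\subset S_u(\bar x,\bar t/2)$ and the two-sided ball containment) can be arranged uniformly over $\Omega'$ with $h\leq h_0$; all the PDE content is already packaged in Lemma \ref{Trlem}, (\ref{fOm0}), and Proposition \ref{C1a_prop}.
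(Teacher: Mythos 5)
Your proposal is correct and follows essentially the same route as the paper: verify the supersolution property of $w_\e$ with bounded right-hand side via (\ref{fe+}), obtain the twisted subsolution condition from Lemma \ref{Trlem} with $G^{z}(x)=e^{D_\ast|Du_\e(x)-Du_\e(z)|^2/(2\e)}$ and the drift/H\"older bounds from Lemma \ref{detD2ue_up} and Proposition \ref{C1a_prop}, then apply Theorem \ref{HI_thm} on small sections around points of $\Omega'$ (note only that $F^z_\e(Du_\e(z))=0$, so your normalizing constant is just $1$).
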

\begin{proof} Let us summarize from the proof of Proposition \ref{bdrw} the following properties of $w_\e$:
\begin{enumerate}
\item {\it $w_\e$ is a supersolution} of the linearized Monge--Amp\`ere operator $U_\e^{ij} D_{ij}$ with bounded right-hand side: \[U^{ij}_\e D_{ij} w_\e \leq D_3(\e)\quad \text{in } \Omega.\]

\item {\it Up to a gauge transformation, $w_\e$ is a subsolution}  of the linearized Monge--Amp\`ere operator, generated by $U_\e^{ij} D_{ij}$, with bounded drift and bounded right-hand side. More precisely, for each $\bar x\in\Omega$, there are functions $\bb^{\bar x}\in L^{\infty}(\Omega;\R^n)$, and  $G^{\bar x} \geq 1$ where $G^{\bar x}(\bar x)=1$ and $G^{\bar x}$ is H\"older continuous on $\overline{\Omega}$ such that $\eta^{\bar x}= w_\e G^{\bar x}$ satisfies
\[U^{ij}_\e D_{ij} \eta^{\bar x} + \bb^{\bar x} (x) \cdot D\eta^{\bar x} \geq -D_2(\e)\quad\text{in }\Omega.\]
The explicit formulas for $G^{\bar x}$ and $\bb^{\bar x}$ are:
\[G^{\bar x}(x)=e^{\frac{D_\ast|Du_\e(x)- Du_\e(\bar x)|^2}{2\e}}; \quad \bb^{\bar x}(x)=-(\det D^2 u_\e) \e^{-1} D_\ast(Du_\e(x)-Du_\e(\bar x)).\]
\end{enumerate}
From Lemma \ref{detD2ue_up} and Proposition \ref{C1a_prop}, we can find a constant $\bar C(\e)$ such that
\[\sup_{\bar x\in \Omega}\|\bb^{\bar x}\| + \sup_{\bar x\in\Omega} \sup_{x\in\Omega} \frac{|G^{\bar x}(x)- G^{\bar x}(\bar x)|}{|x-\bar x|^{\alpha_0(\e)}}\leq \bar C(\e).\]
For any $x_\ast\in\Omega'$, there exist $h(\e), R(\e)>0$, depending also on $\text{dist}(\Omega',\p\Omega)$, such that  $S_{u_\e}(x_\ast, h(\e))\subset\subset\Omega$ satisfies 
\[ B_{R^{-1}(\e)}(x_\ast)\subset S_{u_\e}(x_\ast, h(\e)) \subset B_{R(\e)}(x_\ast).\]
Applying Theorem \ref{HI_thm} with
\[\lambda= C_2^{-1}(\e), \Lambda = C_1(\e), \tilde \lambda =\tilde\Lambda =1, \gamma=\alpha_0(\e), K=\bar C(\e),\]
we find $\alpha_2(\e)\in (0, 1)$, depending also on $\text{dist}(\Omega',\p\Omega)$, such that $w_\e\in C^{\alpha_2(\e)}(\Omega')$.
\end{proof}
Combining the boundary and interior H\"older estimates for $w_\e$, we obtain its global H\"older estimates in the following proposition.
\begin{prop} [Global H\"older estimates for $w_\e$] 
\label{wa_prop}
There is an exponent $\alpha_3(\e)\in (0, 1)$ and $C_5(\e)>0 $ such that \[\|w_\e\|_{C^{\alpha_3(\e)}(\overline{\Omega})}\leq C_5(\e).\]
\end{prop}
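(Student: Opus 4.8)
The plan is to patch together the interior and boundary H\"older moduli of continuity for $w_\e$ into a single global estimate, using the standard covering/dichotomy argument for continuity estimates up to the boundary. First I would fix the two ingredients already established: Proposition \ref{bdrw} gives, for every $x_0\in\p\Omega$, the pointwise estimate $|w_\e(x)-w_\e(x_0)|\leq C_4(\e)|x-x_0|^{\alpha_1(\e)}$ for $x\in\overline{\Omega}\cap B_{C_4^{-1}(\e)}(x_0)$; and Proposition \ref{intw}, applied to $\Omega'=\{x\in\Omega:\dist(x,\p\Omega)>\delta\}$, gives an interior H\"older exponent $\alpha_2(\e,\delta)$ and a bound $\|w_\e\|_{C^{\alpha_2(\e)}(\Omega')}\leq C(\e,\delta)$; note also that $w_\e$ is bounded on $\overline{\Omega}$ (by Lemma \ref{detD2ue_up} and Lemma \ref{detD2ue_low}, $w_\e=(\det D^2u_\e)^{-1}$ lies between $C_1(\e)^{-1}$ and $C_2(\e)$).

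The core of the proof is the usual two-point argument. Take $x,y\in\overline{\Omega}$ and let $d_x=\dist(x,\p\Omega)$, $d_y=\dist(y,\p\Omega)$; assume $d_x\le d_y$. If $|x-y|\ge d_x/2$ (the ``boundary-dominated'' case), pick $x_0\in\p\Omega$ with $|x-x_0|=d_x$; then both $|x-x_0|$ and $|y-x_0|\le|x-y|+d_x\le 3|x-y|$ are comparable to $|x-y|$, so two applications of Proposition \ref{bdrw} plus the triangle inequality give $|w_\e(x)-w_\e(y)|\le C(\e)|x-y|^{\alpha_1(\e)}$ (after first shrinking $|x-y|$ below a threshold; for $|x-y|$ above the threshold one just uses the $L^\infty$ bound on $w_\e$ and enlarges the constant). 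If instead $|x-y|<d_x/2$ (the ``interior-dominated'' case), then the segment $[x,y]$ stays in $\{\dist(\cdot,\p\Omega)>d_x/2\}$, so we apply the interior estimate of Proposition \ref{intw} on this region; the subtlety is that the interior H\"older constant degenerates as the region approaches $\p\Omega$, so one uses the scaling of that estimate: on $B_{d_x/4}(x)$ one has an oscillation bound of the form $\mathrm{osc}_{B_{d_x/4}(x)}w_\e\le C(\e)\,(d_x/4)^{-\alpha_2(\e)}\cdot(\text{bounded quantities})\cdot|x-y|^{\alpha_2(\e)}$, i.e. $|w_\e(x)-w_\e(y)|\le C(\e)(d_x)^{\theta}|x-y|^{\alpha_2(\e)}$ for a suitable (possibly negative) power $\theta$, and since $|x-y|\le d_x$ one absorbs the $d_x$-factor to land at $|w_\e(x)-w_\e(y)|\le C(\e)|x-y|^{\alpha_3(\e)}$ with $\alpha_3(\e)=\min\{\alpha_1(\e),\alpha_2(\e)\}$ after possibly lowering the exponent. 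Setting $\alpha_3(\e):=\min\{\alpha_1(\e),\alpha_2(\e)\}$ and taking $C_5(\e)$ to be the maximum of the constants from the two cases together with $\|w_\e\|_{L^\infty(\Omega)}\cdot(\mathrm{diam}\,\Omega)^{-\alpha_3(\e)}$ then yields $\|w_\e\|_{C^{\alpha_3(\e)}(\overline{\Omega})}\le C_5(\e)$.

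The step I expect to be the main (though still routine) obstacle is the interior-dominated case: one must make precise how the constant in Proposition \ref{intw} scales with $\dist(\Omega',\p\Omega)$, since that proposition is stated only qualitatively. This is handled by the affine invariance of the Monge--Amp\`ere setup: one rescales the section $S_{u_\e}(x_\ast,h(\e))$ via the John-ellipsoid normalization so that Theorem \ref{HI_thm} applies with controlled parameters, and tracks how the normalizing affine map (whose norm is controlled by $d_x$) transforms the H\"older seminorm back to the original coordinates. Alternatively, and more simply, one can avoid explicit scaling by noting that for the purpose of a \emph{global} estimate it suffices to establish, for each boundary point $x_0$, a full (two-sided, all-nearby-pairs) H\"older estimate in a fixed neighborhood $\overline{\Omega}\cap B_{r(\e)}(x_0)$ — which follows by combining Proposition \ref{bdrw} at $x_0$ with the interior estimate on a fixed-size collar — and then covering $\overline{\Omega}$ by finitely many such neighborhoods together with one interior region $\Omega'\subset\subset\Omega$ on which Proposition \ref{intw} applies directly; a standard patching lemma for H\"older functions on a finite cover then produces the global bound.
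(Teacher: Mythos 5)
Your overall skeleton (a two-point dichotomy gluing the boundary pointwise estimate of Proposition \ref{bdrw} with the interior estimate of Proposition \ref{intw}, plus the $L^\infty$ bound on $w_\e$) is the right shape, and the boundary-dominated case $|x-y|\ge d_x/2$ is fine. The genuine gap is in the interior-dominated case $|x-y|<d_x/2$, exactly the step you call routine: there you need the interior H\"older estimate at a point $x$ with small $d_x=\dist(x,\p\Omega)$ with a constant blowing up at most like a power of $d_x$ (or, in oscillation form, a scale-invariant estimate on $B_{d_x/2}(x)$, so that Proposition \ref{bdrw} can feed in $\mathrm{osc}\lesssim d_x^{\alpha_1(\e)}$). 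But the interior estimate here is not a uniformly elliptic estimate on Euclidean balls: it comes from Theorem \ref{HI_thm} applied on sections $S_{u_\e}(x,h)\subset\subset\Omega$, and its exponent and constant depend on the normalization radius $R$ of an ambient section, i.e.\ on the eccentricity of the sections. Your parenthetical claim that the John-normalizing affine map of $S_{u_\e}(x,h)$ has norm ``controlled by $d_x$'' is precisely what is not available from the interior theory alone: bounds such as (\ref{Anorm}) and the volume/engulfing estimates are uniform only after fixing an $R$-normalized ambient section well inside $\Omega$, and for $x$ near $\p\Omega$ nothing established so far bounds the eccentricity of $S_{u_\e}(x,h)$ in terms of $d_x$ and the data. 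This is exactly where the paper's proof invokes Savin's boundary localization theorem \cite{S1,S2} (using $C_2^{-1}(\e)\le\det D^2u_\e\le C_1(\e)$, $u_\e=\varphi\in C^5$ on the uniformly convex $\p\Omega$), which gives quantitative control of the shape of sections centered at or near boundary points and thereby connects the interior and boundary estimates with uniform constants, as in \cite[Theorem 3.2]{KLWZ} and \cite{LS}. Your proposal never supplies this ingredient.

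Your ``simpler alternative'' does not repair this: a two-sided H\"older estimate on a fixed neighborhood $\overline{\Omega}\cap B_{r(\e)}(x_0)$ cannot be obtained by combining Proposition \ref{bdrw} at the single point $x_0$ with ``the interior estimate on a fixed-size collar,'' because Proposition \ref{intw} requires $\Omega'\subset\subset\Omega$ and its constant degenerates as $\Omega'$ approaches $\p\Omega$, so no interior estimate is available on a collar touching the boundary; moreover, pairs of points whose distance to $\p\Omega$ tends to zero while being much larger than their mutual distance are exactly the unresolved interior-dominated case again, so the covering argument is circular at the crucial scale. To complete the proof along your lines you must either import Savin's localization theorem to control sections (hence the normalizing affine maps and the parameter $R$) up to the boundary, or prove directly a scale-invariant oscillation estimate near $\p\Omega$ --- which amounts to the same thing.
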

\begin{proof} The proposition follows from Propositions \ref{bdrw}  and \ref{intw} using Savin's boundary localization theorem \cite{S1, S2} for the Monge--Amp\`ere equation to connect interior estimates and boundary estimates for linearized Monge--Ampere type equations.
It is similar to the proof of \cite[Theorem 3.2]{KLWZ} so we omit it. 
\end{proof}
Finally, we are ready to establish the global $W^{4, s}$ estimates for $u_\e$.
\begin{thm}[Global $W^{4, s}$ estimates for $u_\e$] 
\label{W4s_thm}
For any $s\in (n, \infty)$, there is a constant $C_\ast(\e, s)$ such that any uniformly convex solution $u_\e \in W^{4,s}(\Omega)$ to (\ref{sAe2}) satisfies
\begin{equation}\label{ueW4s}\|u_\e\|_{W^{4, s}(\Omega)}\leq C_6(\e, s).\end{equation}
\end{thm}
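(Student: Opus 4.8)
The plan is to bootstrap from the global H\"older regularity of $w_\e$ (Proposition \ref{wa_prop}) through the Monge--Amp\`ere equation \eqref{detuwe} and then the linearized equation \eqref{Uwfe}, in the now-classical way for Abreu-type systems. First I would record that, by Lemmas \ref{detD2ue_up} and \ref{detD2ue_low}, $\det D^2 u_\e = w_\e^{-1}$ is pinched between two positive constants $C_2^{-1}(\e)$ and $C_1(\e)$, and by Proposition \ref{wa_prop} the right-hand side $w_\e^{-1}$ of \eqref{detuwe} lies in $C^{\alpha_3(\e)}(\overline\Omega)$ with a controlled norm. Since $\varphi\in C^5(\overline\Omega)$ and $\Omega$ is uniformly convex, Caffarelli's interior $C^{2,\alpha}$ estimates together with Savin's global $C^{2,\alpha}$ estimates for the Monge--Amp\`ere equation (using the boundary localization theorem \cite{S1, S2}) give $u_\e\in C^{2,\alpha_3(\e)}(\overline\Omega)$ with $\|u_\e\|_{C^{2,\alpha_3(\e)}(\overline\Omega)}\le C(\e)$. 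In particular $D^2 u_\e$, hence $U_\e^{ij}$, is H\"older continuous and uniformly elliptic on $\overline\Omega$: there are constants (depending on $\e$) so that $\Theta(\e)^{-1} I_n \le (U_\e^{ij}) \le \Theta(\e) I_n$ and $\|U_\e^{ij}\|_{C^{\alpha_3(\e)}(\overline\Omega)}\le \Theta(\e)$.

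Next I would upgrade $w_\e$ from $C^\alpha$ to $C^{2,\alpha}$. With $u_\e$ now $C^{2,\alpha}$, equation \eqref{Uwfe} becomes a genuinely uniformly elliptic linear equation $U_\e^{ij}D_{ij}w_\e = f_\e/\e$ with H\"older continuous coefficients. The obstacle is that $f_\e$ is only bounded (not H\"older): from \eqref{F02}, \eqref{F1C} and the gradient bound \eqref{Dueb}, $f_\e\in L^\infty(\Omega)$, but the term $\mathcal{S}u_\e$ involving $D_{ij}u_\e$ is merely continuous, not H\"older, a priori. So I would first apply interior and global $W^{2,p}$ (Calder\'on--Zygmund) estimates for \eqref{Uwfe}, using the $C^{\alpha_3(\e)}(\overline\Omega)$ boundary data $w_\e=\psi$ and $f_\e\in L^\infty\subset L^p$ for every $p<\infty$, to get $w_\e\in W^{2,p}(\Omega)$ for all $p<\infty$, hence $w_\e\in C^{1,\beta}(\overline\Omega)$ for all $\beta<1$. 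This in turn gives $\det D^2 u_\e = w_\e^{-1}\in C^{1,\beta}(\overline\Omega)$, so by Schauder-type global estimates for Monge--Amp\`ere \cite{S1, S2} we get $u_\e\in C^{3,\beta}(\overline\Omega)$. Now $Du_\e\in C^{2,\beta}$, so $\mathcal{S}u_\e$ and the $F^1_{p_i x_i}$ term are $C^{\min(\beta,\alpha)}$, and since $\partial F^0/\partial z(x,u_\e)$ is $C^1$ in its arguments with $u_\e\in C^1$, we conclude $f_\e\in C^{\beta'}(\overline\Omega)$ for some $\beta'>0$. Feeding this back into \eqref{Uwfe}, global Schauder estimates give $w_\e\in C^{2,\beta'}(\overline\Omega)$, hence $\det D^2 u_\e\in C^{2,\beta'}(\overline\Omega)$, and a final application of global Schauder estimates for Monge--Amp\`ere yields $u_\e\in C^{4,\beta'}(\overline\Omega)\subset W^{4,s}(\Omega)$ for every $s\in(n,\infty)$, with all norms bounded by a constant $C_6(\e,s)$ depending only on the stated data and $\e$.

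The main obstacle is the one identified above: closing the bootstrap loop between the two equations given that $f_\e$ starts out only bounded, so that one cannot jump directly to Schauder estimates but must first pass through $W^{2,p}$ theory for \eqref{Uwfe} to gain enough regularity on $Du_\e$ (via $u_\e\in C^{3,\beta}$) that $\mathcal{S}u_\e$ becomes H\"older. A secondary technical point is that all boundary estimates for the linearized equation \eqref{Uwfe} and for the Monge--Amp\`ere equation \eqref{detuwe} are genuinely global: one uses Savin's boundary localization and the uniform convexity of $\Omega$ together with the smoothness of $\varphi,\psi$ to ensure the sections of $u_\e$ at boundary points are balanced, so that interior-type estimates apply near $\p\Omega$ as well. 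These are by now standard for Abreu-type equations (cf.\ \cite{KLWZ}), so I would carry them out by reference rather than in detail, emphasizing only how the $C^\alpha$ input from Proposition \ref{wa_prop} and the pinching of $\det D^2 u_\e$ drive the iteration.
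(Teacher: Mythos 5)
Your first two steps are exactly the paper's proof: from Proposition \ref{wa_prop} and the pinching of $\det D^2 u_\e$, the global $C^{2,\alpha}$ theory for the Monge--Amp\`ere equation gives $\|u_\e\|_{C^{2,\alpha_3(\e)}(\overline\Omega)}\leq C(\e)$; then $(U^{ij}_\e)$ is uniformly elliptic with $C^{\alpha_3(\e)}(\overline\Omega)$ coefficients, $f_\e\in L^\infty(\Omega)$, and the Calder\'on--Zygmund estimates applied to \eqref{Uwfe} with boundary data $\psi$ give $w_\e\in W^{2,s}(\Omega)$ for every finite $s$. At that point the paper stops and deduces \eqref{ueW4s} directly by differentiating $\det D^2 u_\e=w_\e^{-1}$, since the resulting right-hand sides are in $L^s$ and the linearized operator has H\"older coefficients.

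The further bootstrap you append is flawed, and since you only deduce the theorem from its endpoint $u_\e\in C^{4,\beta'}(\overline\Omega)$, this is a genuine gap in your write-up. The right-hand side $f_\e$ in \eqref{sAe2} is defined piecewise, with the factor $\chi_{\Omega_0}$ on one branch and $\chi_{\Omega\setminus\Omega_0}$ on the other, and the two expressions $\frac{\p F^0}{\p z}(x,u_\e)+\mathcal{S}u_\e-\sum_i F^1_{p_i x_i}(x,Du_\e)$ and $(u_\e-\mu_\e)/\e$ do not match on $\p\Omega_0$. So $f_\e$ is genuinely discontinuous across $\p\Omega_0$ no matter how smooth $u_\e$ becomes; the claim ``$f_\e\in C^{\beta'}(\overline\Omega)$'' is false, Schauder theory cannot upgrade $w_\e$ to $C^{2,\beta'}(\overline\Omega)$ (the best available is $W^{2,p}$ for all $p<\infty$, which you already have), and $u_\e$ is not $C^4$ across $\p\Omega_0$ in general. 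The repair is simply to discard the last stage: from $u_\e\in C^{2,\alpha_3(\e)}(\overline\Omega)$ (or your intermediate $C^{3,\beta}$, which is fine) and $w_\e\in W^{2,s}(\Omega)$, differentiate the Monge--Amp\`ere equation once and twice and apply Schauder and then $W^{2,s}$ estimates to the linearized operator $u_\e^{ij}D_{ij}$; this yields $\|u_\e\|_{W^{4,s}(\Omega)}\leq C_6(\e,s)$ without ever needing $f_\e$ to be H\"older, which is the route the paper takes.
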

\begin{proof} From \[\det D^2 u_\e= w_\e^{-1}\quad\text{in }\Omega,\quad u_\e=\varphi \quad\text{on }\p\Omega,\] together with Proposition \ref{wa_prop} and the global $C^{2,\alpha}$ estimates for the Monge--Amp\`ere
equation \cite{S2, TW08}, we have
\[\|u_\e\|_{C^{2,\alpha_3(\e)}(\overline{\Omega})} \leq C_6(\e).\]
Thus, the second order operator $U^{ij}_\e$ is uniformly elliptic with $C^{\alpha_3(\e)}(\overline{\Omega})$ coefficients.  Moreover  $\|f_\e\|_{L^{\infty}(\Omega)}\leq \hat C_0(\e)$. Now, from
\[U^{ij}_\e D_{ij} w_\e = f_\e/\e \quad\text{in }\Omega,\quad  w_\e= \psi\quad\text{on }\p\Omega,\]
we can estimate $w_\e$ in $W^{2, s}(\Omega)$. The estimate (\ref{ueW4s}) for $u_\e$ in $W^{4, s}(\Omega)$ follows.
\end{proof}

\subsection{Proof of Theorem \ref{RCthm}} We are now ready to prove Theorem \ref{RCthm}.
\begin{proof}[Proof of Theorem \ref{RCthm}] $(i)$ From the a priori estimates in Theorem \ref{W4s_thm}, one can argue using degree theory as in the proof of \cite[Theorem 1.4(i)]{LePRS} to obtain the existence of a uniformly convex solution $u_\e \in W^{4,s}(\Omega)$ to (\ref{sAe2}) for all $s\in (n,\infty)$. 

$(ii)$ Let $u_\e\in W^{4, s}(\Omega)$ $(s>n)$ be a solution to (\ref{sAe2}). Then, for  $\e\leq \e_0$ small, the uniform estimate (\ref{ue_Linf}) holds. From this and the convexity of $u_\e$, we find that $Du_\e$ is uniformly bounded on each compact subset of $\Omega$. Thus, a subsequence of $u_\e$ converges uniformly on compact subsets of $\Omega$ 
to a convex function $u$ on $\Omega$. As in the proof of \cite[Theorem 1.4(ii)]{LePRS}, we can show that $u$ is a minimizer of (\ref{minF2}).  
\end{proof}

\section{Harnack inequality for singular linearized Monge--Amp\`ere type equations with twisted conditions}
\label{HI_sect}
In this section, we prove Theorem  \ref{HI_thm} following that of \cite[Theorem 1.1]{LeCCM} where no twisted conditions were involved. For reader's convenience, we recall in Subsection \ref{crit_sect} estimates from \cite{LeCCM} for supersolutions of (\ref{ALMA}) with $L^n$ right-hand side: the critical density estimate with $L^n$ drift in Proposition \ref{decay_rem}, the critical density estimate with $L^p$ drift in Lemma \ref{res_lem}, and 
the decay estimate of the distribution function (or $L^\e$ estimate) in Theorem \ref{decay_thm}. They will be used in Subsection \ref{twisted_sect} to prove a Harnack inequality for singular linearized Monge--Amp\`ere equations with small drifts under twisted conditions in Theorem \ref{bsmall_CG}. Theorem  \ref{HI_thm} then follows.

As preliminaries, we recall the volume estimates and  interior $C^{1,\alpha_\ast}$ estimates for the Monge--Amp\`ere equation.  

Assume that the strict convex function $u$ satisfies (\ref{pinch1}). A section $S_u(x, h)$ is called {\it $R$-normalized} if there is $z\in\R^n$ such that \[B_{R^{-1}}(z)\subset S_u(x, h)\subset B_R(z).\]

If $S_u(z, h)\subset\subset\Omega$, then the following {\it volume estimates for section} hold:
 \begin{equation}
 \label{vol_sect} C^{-1}_0(n,\lambda,\Lambda) h^{n/2}\leq |S_u(z, h)|\leq C_0(n,\lambda,\Lambda) h^{n/2};
 \end{equation}
 see \cite[Lemma 4.6]{F}, \cite[Corollary 3.2.5]{G}, and \cite[Theorem 3.42]{LMT}.
 
The strict convexity of $u$ implies that for each $x\in\Omega$, there is $h(x)>0$ such that $S_u(x, h(x))\subset\subset\Omega$. Moreover, this together with (\ref{pinch1}) 
implies the existence of $\alpha_\ast\in (0, 1)$ that depends only on $n,\lambda,\Lambda$ (but not on the $C^2$ character of $u$) such that the following 
{\it interior $C^{1,\alpha_\ast}$ estimates} for $u$ hold: If $S_u(x, h)\subset\subset\Omega$ is $R$-normalized, then 
\begin{equation}
\label{uC1aa}
|Du(y)-Du(z)|\leq C(n,\lambda,\Lambda,  R)|y-z|^{\alpha_\ast}\quad \text{for all } y, z\in S_u(x, h/2);\end{equation}
 see \cite[Theorem 4.20]{F}, \cite[Theorem 5.4.5]{G}, and \cite[Theorem 3.58]{LMT}.

We will prove Theorem \ref{HI_thm} with (\ref{bbzinfty}) being replaced by a slightly relaxed condition:
\begin{equation}
\label{bbzq}
\|\bb\|_{L^q(S)} + \sup_{z\in S}\|\bb^z\|_{L^q(S)} +\sup_{z\in S}\sup_{x\in S }\frac{|G^z(x)-G^z(z)|}{|x-z|^\gamma} \leq K \end{equation}
where
\begin{equation}
\label{paeq}
q>\frac{n(1+\alpha_\ast)}{2\alpha_\ast}:= p_\ast .\end{equation}
This range of $q$ allows us to control the $L^q$ norm of the vector field $\bb$ when rescaling the linearized Monge--Amp\`ere equation (\ref{ALMA});
see (\ref{tbLp}).
In what follows, we fix
\[p= (p_\ast + q)/2.\] 
In the rest of this section, we call a positive constant  {\it universal} if it depends only on $n, q$, $\lambda,\Lambda$, $\tl,\tL$, and $R$. We denote universal constants by $C, C_1, C_2$,  $c_1, c_2$, $\e_3, \e_4$, $M,\delta_\ast, \cdots,$ etc, 
and their values may change from line to
line. We use $C(\cdot, \cdot, \cdot)$ to emphasize the dependence of the constant $C$ on the parameters in the parentheses.

\subsection{Decay estimate of the distribution function for supersolutions}
\label{crit_sect}
This section recalls results from \cite{LeCCM} concerning properties of supersolutions to the linearized Monge--Amp\`ere type equation with $L^n$ drift 
\[a^{ij} D_{ij} v + \bb\cdot Dv\leq f.\]

We first recall the critical density estimate.
\begin{prop}[Critical density estimate for supersolutions with small $L^n$ drift; see  Proposition 5.1 in \cite{LeCCM}]
 \label{decay_rem} Assume that (\ref{pinch1}) and (\ref{AlamU}) are satisfied in $\Omega\subset\R^n$. 
 Suppose that $v\geq 0$ is a $W^{2,n}_{\mathrm{loc}}(\Omega)$ solution of $$a^{ij}D_{ij} v+ \bb \cdot Dv \leq f$$  in an $n$-normalized section $S_u(0, 4t_0)\subset\subset\Omega$. 
 Then, there are small, universal constants $\delta_\ast>0,\e_3>0$ and a large constant $M>1$(all depending only on $n,\lambda,\Lambda,\tl$ and $\tL$) with the following properties. If 
 $$\|\bb\|_{L^n(S_u(0, 4t_0))}+ \|f^{+}\|_{L^n(S_u(0, 4t_0))}\leq \e_3,$$
 and for some nonnegative integer $k$, we have
 \begin{equation*}|\{v>M^{k+1}\}\cap S_u(0, t_0)|> (1-\delta_\ast) |S_u(0, t_0)|, 
 \end{equation*}
then \[v>M^k\quad\text{ in }S_u(0, t_0).\]
\end{prop}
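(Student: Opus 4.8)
The plan is to follow the argument of Proposition~5.1 in \cite{LeCCM}, which adapts the critical density lemma of Caffarelli--Guti\'errez \cite{CG} to accommodate a drift $\bb$ and a right-hand side $f$ that are only in $L^n$. First I would reduce to the case $k=0$ by scaling: since $M>1$ and $k\geq 0$, the function $v/M^k$ is again a nonnegative $W^{2,n}_{\loc}$ solution of $a^{ij}D_{ij}(v/M^k)+\bb\cdot D(v/M^k)\leq f/M^k$ with $\|(f/M^k)^+\|_{L^n}\leq\|f^+\|_{L^n}\leq\e_3$, while the hypothesis and conclusion become $|\{v>M\}\cap S_u(0,t_0)|>(1-\delta_\ast)|S_u(0,t_0)|$ and $v>1$ in $S_u(0,t_0)$. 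I would then argue by contradiction: suppose $v(\bar x)\leq 1$ at some $\bar x\in S_u(0,t_0)$. Because $f^+\geq 0$, the constant $M$ is a supersolution of $a^{ij}D_{ij}w+\bb\cdot Dw\leq f^+$, hence so is the truncation $\bar v:=\min\{v,M\}$; this $\bar v$ is a nonnegative supersolution bounded by $M$, with $\bar v(\bar x)\leq 1$ and with bad set $E:=\{\bar v<M\}\cap S_u(0,t_0)=\{v<M\}\cap S_u(0,t_0)$ of measure $|E|<\delta_\ast|S_u(0,t_0)|$.

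The heart of the matter is a comparison of $\bar v$ against a barrier read off from the Monge--Amp\`ere geometry of the section. Let $\ell_0$ be the supporting affine function of $u$ at $0$ and $g_0:=u-\ell_0\geq 0$, so that $S_u(0,h)=\{g_0<h\}$. The key algebraic fact is that (\ref{AlamU}) gives $a^{ij}D_{ij}g_0=\trace(A\,D^2u)\in[\,n\tl\,\det D^2u,\;n\tL\,\det D^2u\,]\subset[\,n\tl\lambda,\;n\tL\Lambda\,]$, a two-sided bound by universal constants. Consequently the explicit function $g:=4-4g_0/t_0$ is a genuine subsolution up to a \emph{constant} defect, namely $a^{ij}D_{ij}g=-(4/t_0)a^{ij}D_{ij}g_0\geq -4n\tL\Lambda/t_0=:-\zeta_0$, with $g=0$ on $\partial S_u(0,t_0)$, $g\geq 2$ on $S_u(0,t_0/2)$, and $0\leq g\leq 4$. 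I would then apply the Aleksandrov--Bakelman--Pucci estimate for the linearized Monge--Amp\`ere operator with drift to $w:=g-\bar v$ on $S_u(0,t_0)$. Here $\det A\geq\tl^{\,n}(\det D^2u)^{n-1}\geq\tl^{\,n}\lambda^{n-1}$, so the background weight and the exponential factor $\exp\!\big(c\!\int(1+|\bb|^n/\det A)\big)$ in the ABP bound are controlled by universal constants once $\|\bb\|_{L^n}\leq\e_3$. Since $w\leq 0$ on $\partial S_u(0,t_0)$ and, for $\bar x\in S_u(0,t_0/2)$, $w(\bar x)\geq 2-1=1$, the refined ABP inequality yields
\[
1\;\leq\;\sup_{S_u(0,t_0)}w\;\leq\;C\Big(\|f^+\|_{L^n(S_u(0,t_0))}+\zeta_0\,|\Gamma|^{1/n}\Big),
\]
where $\Gamma$ is the part of the upper contact set of $w$ on which $g>\bar v$. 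Choosing $M>4\geq g$, on $\{\bar v=M\}$ one has $g\leq 4<M=\bar v$, so $\Gamma\subset\{g>\bar v\}\subset\{\bar v<M\}\cap S_u(0,t_0)=E$, whence $|\Gamma|<\delta_\ast|S_u(0,t_0)|$. The volume estimate (\ref{vol_sect}) and the fact that the normalization of $S_u(0,4t_0)$ forces $t_0$ to be comparable to a universal constant then give $\zeta_0|\Gamma|^{1/n}\leq C\,t_0^{-1/2}\delta_\ast^{1/n}|S_u(0,t_0)|^{1/n}\leq C\delta_\ast^{1/n}$, so $1\leq C(\e_3+\delta_\ast^{1/n})$ with $C=C(n,\lambda,\Lambda,\tl,\tL)$. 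Choosing $\e_3$ and $\delta_\ast$ small, depending only on $n,\lambda,\Lambda,\tl,\tL$, produces the desired contradiction, so that in fact $v>1$ on $S_u(0,t_0/2)$.

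It remains to upgrade this model estimate from a co-centric sub-section to the statement exactly as phrased; this is carried out, as in \cite{CG} and \cite[Proposition~5.1]{LeCCM}, by covering $S_u(0,t_0)$ with finitely many sections $S_u(y_i,h_i)\subset S_u(0,t_0)$ whose half-sections $S_u(y_i,h_i/2)$ still cover $S_u(0,t_0)$, and observing (via the volume and engulfing properties of sections, with the ample ambient room provided by working inside the normalized section $S_u(0,4t_0)$) that the density hypothesis on $S_u(0,t_0)$ transfers, after shrinking $\delta_\ast$, to each $S_u(y_i,h_i)$, to which the model estimate applies. I expect the genuinely delicate points to be (a) the barrier construction together with the ABP contact-set bookkeeping needed to confine the error integral to the bad set $E$ of measure $<\delta_\ast|S_u(0,t_0)|$, and (b) keeping all constants \emph{universal}, i.e.\ independent of the $C^2$-character of $u$ — for which the volume estimate (\ref{vol_sect}), the interior $C^{1,\alpha_\ast}$ estimate (\ref{uC1aa}), and the structural identity $a^{ij}D_{ij}u=\trace(A\,D^2u)\asymp\det D^2u$ are the essential ingredients. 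By contrast, the drift $\bb$ enters only through the harmless exponential factor in the ABP bound, which is precisely why its $L^n$-norm is merely required to be small rather than absent.
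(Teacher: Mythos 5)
Your ``model estimate'' is essentially a correct Caffarelli--Guti\'errez-type argument with drift: the trace bound $n\tl\lambda\leq a^{ij}D_{ij}u\leq n\tL\Lambda$ coming from (\ref{pinch1})--(\ref{AlamU}), the barrier $g=4-4(u-\ell_0)/t_0$, the lower bound $\det A\geq \tl^n\lambda^{n-1}$, and the ABP estimate with $L^n$ drift (with $t_0$ universal because $S_u(0,4t_0)$ is $n$-normalized) do give: under the density hypothesis on $S_u(0,t_0)$, one has $v>M^k$ on the \emph{co-centric subsection} $S_u(0,t_0/2)$. (Two small remarks: the truncation $\bar v=\min\{v,M\}$ is both unnecessary and slightly delicate in $W^{2,n}_{\mathrm{loc}}$; you can work with $v$ itself since the upper contact set of $g-v$ already lies in $\{g>v\}\subset\{v<4\}\subset\{v<M\}$.) Note also that the paper does not prove this proposition at all: it is quoted from \cite{LeCCM}, Proposition 5.1, where it is established by the sliding-paraboloid/contact-point method, which yields the same-section conclusion directly; your ABP-barrier route is genuinely different and only reaches a weaker statement, as explained next.

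The genuine gap is the final upgrade from $S_u(0,t_0/2)$ to all of $S_u(0,t_0)$. The covering you invoke cannot exist: each half-section $S_u(y_i,h_i/2)$ is compactly contained in $S_u(y_i,h_i)\subset S_u(0,t_0)$, so \emph{finitely} many of them cannot cover the open set $S_u(0,t_0)$ up to its boundary. If you allow the heights to shrink near $\p S_u(0,t_0)$, the density hypothesis does not transfer with a universal $\delta_\ast$: the bad set is only known to have measure $<\delta_\ast|S_u(0,t_0)|$, which is not small relative to $|S_u(y_i,h_i)|\sim h_i^{n/2}$ when $h_i\to 0$. If instead you center sections of height comparable to $t_0$ at points near $\p S_u(0,t_0)$ (these do fit inside $S_u(0,4t_0)$, where the equation holds), they spill outside $S_u(0,t_0)$, where the hypothesis gives no information on $|\{v\leq M^{k+1}\}|$; repairing this would require a measure-to-pointwise estimate valid for an arbitrary positive density fraction, i.e.\ essentially the $L^\e$ estimate of Theorem \ref{decay_thm}, which is what the present proposition is meant to produce -- so the argument would be circular at this point of the development. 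Hence, as written, you have proved a strictly weaker, sub-section version. Incidentally, that weaker version would in fact suffice for how the result is used downstream in this paper (through Lemma \ref{res_lem}, always applied at the center of a section, e.g.\ to obtain (\ref{finalest}) in the proof of Theorem \ref{bsmall_CG}), but it is not the statement as phrased; for the same-section statement one has to argue as in \cite{LeCCM} (or, e.g., via \cite{CG}-type arguments combined with a genuine weak-Harnack/covering iteration carried out at all scales).
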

Next, we would like to extend the above critical density estimate to non-normalized sections $S_u(x_0, h)$. For this, we will rescale the linearized Monge--Amp\`ere equation. Due to the degeneracy of (\ref{pinch1}), we will need higher integrability for the vector field $\bb$.

{\bf Rescaling linearized Monge--Amp\`ere equations on a section using John's lemma.}

Assume that (\ref{pinch1}) and (\ref{AlamU}) are satisfied in $\Omega\subset\R^n$. 
We will investigate how the inequalities
\begin{equation}a^{ij} D_{ij} v + \bb\cdot Dv \leq (\geq) f
 \label{eqSh}
\end{equation}
change with respect to normalization, using John's lemma, of a section $S_u(x_0, h)\subset\subset\Omega$ of $u$. 

By subtracting $u(x_0) + Du(x_0)\cdot (x-x_0)+ h$ from $u$, we can assume 
that $u|_{\p S_u(x_0, h)} =0$, and $u$ achieves its minimum value $-h$ at $x_0$. From John's lemma (see, for example \cite[Lemma 3.23]{LMT}), we can 
find an affine transformation \[Tx =A_h x+ b_h\] such that $T^{-1} (S_u(x_0, h))$ is {\it $n$-normalized}; more precisely, we have
\begin{equation} \label{normSh} B_1 (0)\subset T^{-1} (S_u(x_0, h))\subset B_n(0).
\end{equation}
Let us consider the rescaled functions $\tilde u$ of $u$, and $\tilde v$ of $v$ defined by
$$\tilde u(x) =  (\det A_h)^{-2/n} u(Tx),\quad\text{and } \tilde v(x)= v(Tx).$$
Then from (\ref{pinch1}), we have
\begin{equation}
 \label{detD2tilde}
\lambda \leq \det D^2 \tilde u(x)=(\det D^2 u)(Tx) \leq \Lambda~\text{in}~T^{-1} (S_u(x_0, h)),~\tilde u =0 \text{ on }\p T^{-1}(S_u(x_0, h)),
\end{equation}
and
$$B_1(0)\subset \tilde S:=T^{-1}(S_u(x_0, h)) = S_{\tilde u}(y, (\det A_h)^{-2/n} h)\subset B_n(0)$$
where $y$ is the minimum point of $\tilde u$ in $T^{-1} (S_u(x_0, h))$. 

Consider the rescaled coefficient matrix $\tilde A= \left(\tilde a^{ij}\right)_{1\leq i, j\leq n}$ defined by
\begin{equation}
 \label{tildeAA}
 \tilde A= (\det A_h)^{2/n} A_h^{-1} A (A_h^{-1})^{t}.
\end{equation}
Define the rescaled vector field $\tilde \bb$ and function $\tilde f$ by
\begin{equation}\tilde \bb(x) =(\det A_h)^{2/n} A_h^{-1}\bb(Tx),\quad\text{and }\tilde f(x) =(\det A_h)^{2/n} f(Tx).
 \label{bcf}
\end{equation}
Then, the inequalities (\ref{eqSh}) 
become
\begin{eqnarray}
 \label{eqSh1}
 \tilde a^{ij}D_{ij}  \tilde v+ \tilde \bb\cdot D\tilde v  \leq (\geq) \tilde f(x)
\quad\text{in } T^{-1} (S_u(x_0, h)).
\end{eqnarray}

For completeness, we include the derivation of (\ref{eqSh1}). We have
$$D \tilde u = (\det A_h)^{-2/n} A_h^{t} Du;~D^2 \tilde u = (\det A_h)^{-2/n} A_h^{t}D^2 u A_h, $$
and $$D \tilde v = A_h^{t} Dv;~D^2 \tilde v = A_h^{t}D^2 v A_h.$$
The cofactor matrix $\tilde U= (\tilde U^{ij})_{1\leq i, j\leq n}$ of $D^2 \tilde u$ is related to $U=(\det D^2 u) (D^2 u)^{-1}$ and $A_h$ by
\begin{equation}\label{tildeU} \tilde U = (\det D^2 \tilde u)(D^2 \tilde u)^{-1}=
(\det A_h)^{2/n} A_h^{-1} U (A_h^{-1})^{t}. 
\end{equation}
Therefore, from (\ref{tildeAA}), we obtain $$\tilde a^{ij} D_{ij} \tilde v (x)= \trace (\tilde A D^2\tilde v)= (\det A_h)^{2/n} \trace (AD^2 v(Tx))= (\det A_h)^{2/n}  a^{ij} D_{ij} v(Tx).$$
Hence, recalling (\ref{eqSh}), we easily obtain (\ref{eqSh1}) from
\begin{eqnarray*}
 \tilde a^{ij} D_{ij}\tilde v (x) \leq (\geq ) (\det A_h)^{2/n}[f(Tx)- \bb(Tx)\cdot Dv(Tx)]
 = \tilde f(x) -\tilde \bb\cdot D\tilde v.
\end{eqnarray*}

By (\ref{detD2tilde}), (\ref{tildeAA}) and (\ref{tildeU}), $\tilde u$ and $\tilde A$ also satisfy the structural conditions (\ref{pinch1}) and (\ref{AlamU}) on $T^{-1} (S_u(x_0, h))$. 

Using the volume estimates (\ref{vol_sect}), we find from (\ref{normSh}) that 
\begin{equation}
\label{detAh}
[C(n,\lambda,\Lambda)]^{-1} h^{n/2}\leq \det A_h
\leq C(n,\lambda,\Lambda) h^{n/2}.
\end{equation}

{\it Assume now $S_u(x_0, h)\subset S_u(\bar x, \bar t/2)$ where $S_u(\bar x, \bar t)\subset\subset\Omega$ is an $R$-normalized section. }

From the volume estimates of sections, we can find $c_{\ast}(n,\lambda,\Lambda, R)$ such that
\begin{equation}
\label{cadefn}
h\leq c_\ast
\text{ whenever }S_u(x_0, h)\subset S_u(\bar x, \bar t/2), \quad\text{where }S_u(\bar x, \bar t)\subset\subset\Omega \text{ is an R-normalized}.
\end{equation}

In \cite{LeCCM}, a similar choice for $c_\ast$ was discussed before the statement of Lemma 5.2. Here it also serves the role of $h_0$ in Theorem 6.1 in \cite{LeCCM}.

Note that the interior $C^{1,\alpha_\ast}$ estimate for $u$  in (\ref{uC1aa}) implies that
 $$S_u(x_0, h)\supset B_{c_1h^{\frac{1}{1+\alpha_\ast}}}(x_0)$$ for some universal constant $c_1= c_1(n,\lambda,\Lambda, R)$. This combined with (\ref{normSh}) gives
 \begin{equation}
   \label{Anorm}
  \|A_h^{-1}\|\leq C(n,\lambda,\Lambda, R)h^{-\frac{1}{1+\alpha_\ast}}.
 \end{equation}
By (\ref{bcf}), (\ref{detAh}) and (\ref{Anorm}), we can estimate 
\begin{eqnarray} \label{tbLp}\|\tilde \bb\|_{L^p(\tilde S)}\leq C h \| A_h^{-1}\bb(Tx)\|_{L^p(\tilde S)}&\leq&
Ch^{1-\frac{n}{2p}}\|A_h^{-1}\|\|\bb\|_{L^p(S_u(x_0, h))}\nonumber\\ &\leq& C(n,\lambda,\Lambda, R) h^{\frac{\alpha_\ast}{1+\alpha_\ast}-\frac{n}{2p}}\|\bb\|_{L^p(S_u(x_0, h))}.
\end{eqnarray}
 Then, by the H\"older inequality, we have
\begin{equation} \|\tilde \bb\|_{L^n(\tilde S)}\leq |\tilde S|^{\frac{1}{n}-\frac{1}{p}}\|\tilde \bb\|_{L^p(\tilde S)}\leq 
C(n,\lambda,\Lambda, R, p) h^{\frac{\alpha_\ast}{1+\alpha_\ast}-\frac{n}{2p}}\|\bb\|_{L^p(S_u(x_0, h))}.
\label{tbLn}
\end{equation}
Recalling (\ref{detAh}), we can estimate
\begin{equation} \|\tilde f\|_{L^n(\tilde S)}=(\det A_h)^{2/n} h^{-1/2} \|f\|_{L^n(S_u(x_0, h))}\leq C(n,\lambda,\Lambda)h^{1/2} \|f\|_{L^n(S_u(x_0, h))}.
 \label{tfLn}
\end{equation}
Due to \[p>p_\ast = \frac{n(1+\alpha_\ast)}{ 2\alpha_\ast},\]
we deduce that  in our rescaling process, the $L^n$ norms of $\bb$ and $f^{+}$ are small if $h$ is small, or $\|\bb\|_{L^p(S_u(x_0, h))} + \|f^{+}\|_{L^n(S_u(x_0, h))}$ is small.

Thus, rescaling and Proposition \ref{decay_rem} give the following result.
\begin{lem} [Critical density estimate  for supersolution with small $L^p$ drift; see Lemma 5.2 in \cite{LeCCM}]
\label{res_lem}
 Assume that (\ref{pinch1}) and (\ref{AlamU}) are satisfied in $\Omega\subset\R^n$. Let $p>p_\ast$ where $p_\ast$ is as in (\ref{paeq}).
There is a small number $\e_4$ depending only on $p,n,R, \lambda,\Lambda,\tl$ and $\tL$ with the following property.
 Suppose that $v\geq 0$ is a $W^{2, n}_{\mathrm{loc}}(\Omega)$ solution of  $$a^{ij}D_{ij} v+ \bb\cdot Dv \leq f$$  in a section 
 $S_u(x_0, h)\subset S_u(\bar x, \bar t/2)$ where $S_u(\bar x, \bar t)\subset\subset\Omega$ is an $R$-normalized section,
 and 
 \begin{equation*}\|\bb\|_{L^p(S_u(x_0, h))} + \|f^{+}\|_{L^n(S_u(x_0, h))}\leq \e_4.
 \end{equation*}
Let $M$ and $\delta_\ast$ be as in Proposition \ref{decay_rem}. If
 for some nonnegative integer $k$, we have
 \begin{equation*}|\{v>M^{k+1}\}\cap S_u(x_0, h/4)|> (1-\delta_\ast) |S_u(x_0, h/4)|,
 \end{equation*}
then \[v>M^k\quad \text{in }S_u(x_0, h/4).\]
\end{lem}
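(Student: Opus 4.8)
The plan is to deduce Lemma~\ref{res_lem} from Proposition~\ref{decay_rem} by the John's-lemma rescaling recorded in the discussion above. Given $S_u(x_0,h)\subset S_u(\bar x,\bar t/2)$ with $S_u(\bar x,\bar t)\subset\subset\Omega$ an $R$-normalized section, I would first subtract the supporting affine function of $u$ at $x_0$ so that $u|_{\p S_u(x_0,h)}=0$ and $u$ attains its minimum $-h$ at $x_0$, then choose the affine map $Tx=A_hx+b_h$ with $B_1(0)\subset\tilde S:=T^{-1}(S_u(x_0,h))\subset B_n(0)$, and pass to the rescaled data $\tilde u,\tilde v,\tilde A,\tilde\bb,\tilde f$ as in \eqref{tildeAA} and \eqref{bcf}. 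By \eqref{detD2tilde}, \eqref{tildeAA} and \eqref{tildeU}, the pair $(\tilde u,\tilde A)$ again satisfies \eqref{pinch1} and \eqref{AlamU} on $T^{-1}(\Omega)$; $\tilde v\ge 0$ lies in $W^{2,n}_{\loc}$; and by \eqref{eqSh1} it solves $\tilde a^{ij}D_{ij}\tilde v+\tilde\bb\cdot D\tilde v\le\tilde f$ in $\tilde S$. Writing $\tilde h:=(\det A_h)^{-2/n}h$, one has $\tilde S=S_{\tilde u}(y,\tilde h)$ with $y$ the minimum point of $\tilde u$, so (after a harmless translation taking $y$ to the origin) $\tilde S$ is the $n$-normalized section playing the role of $S_u(0,4t_0)$ in Proposition~\ref{decay_rem}, with $t_0=\tilde h/4$; and affine invariance of sections gives $T^{-1}(S_u(x_0,h/4))=S_{\tilde u}(y,t_0)$.

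Next I would verify the smallness hypothesis of Proposition~\ref{decay_rem}. By \eqref{cadefn} one has $h\le c_\ast(n,\lambda,\Lambda,R)$, and combining this with \eqref{tbLn} and \eqref{tfLn} (the latter used also with $f$ replaced by $f^{+}$) yields
\[
\|\tilde\bb\|_{L^n(\tilde S)}\le C\,h^{\frac{\alpha_\ast}{1+\alpha_\ast}-\frac{n}{2p}}\|\bb\|_{L^p(S_u(x_0,h))},\qquad
\|\tilde f^{+}\|_{L^n(\tilde S)}\le C\,h^{1/2}\|f^{+}\|_{L^n(S_u(x_0,h))},
\]
with $C$ depending only on $n,\lambda,\Lambda,R,p$. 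Because $p>p_\ast=\tfrac{n(1+\alpha_\ast)}{2\alpha_\ast}$, the exponent $\frac{\alpha_\ast}{1+\alpha_\ast}-\frac{n}{2p}$ is strictly positive, so with $h\le c_\ast$ both right-hand sides are at most $C\,c_\ast^{\theta}\big(\|\bb\|_{L^p(S_u(x_0,h))}+\|f^{+}\|_{L^n(S_u(x_0,h))}\big)$ for some positive exponent $\theta=\theta(n,\lambda,\Lambda,p)$. Hence, choosing $\e_4$ small --- depending only on $p,n,R,\lambda,\Lambda,\tl,\tL$, through the universal quantity $c_\ast$ and the constant $\e_3$ of Proposition~\ref{decay_rem} --- the hypothesis $\|\bb\|_{L^p(S_u(x_0,h))}+\|f^{+}\|_{L^n(S_u(x_0,h))}\le\e_4$ forces $\|\tilde\bb\|_{L^n(\tilde S)}+\|\tilde f^{+}\|_{L^n(\tilde S)}\le\e_3$.

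It then remains to transfer the measure hypothesis and the conclusion through $T$. Since $\tilde v=v\circ T$ and $T$ multiplies every $n$-dimensional volume by the fixed factor $|\det A_h|$, the assumption $|\{v>M^{k+1}\}\cap S_u(x_0,h/4)|>(1-\delta_\ast)|S_u(x_0,h/4)|$ is equivalent to $|\{\tilde v>M^{k+1}\}\cap S_{\tilde u}(y,t_0)|>(1-\delta_\ast)|S_{\tilde u}(y,t_0)|$. Applying Proposition~\ref{decay_rem} with the same $M,\delta_\ast,\e_3$ gives $\tilde v>M^k$ in $S_{\tilde u}(y,t_0)$, i.e. $v>M^k$ in $S_u(x_0,h/4)$, which is the assertion.

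I expect the only delicate point to be the constant bookkeeping: one must confirm that $\e_4$ can be taken to depend only on the listed parameters, and this rests entirely on the strict inequality $p>p_\ast$ (so that the power of $h$ in \eqref{tbLn} is positive and the bound $h\le c_\ast$ can be absorbed) together with $c_\ast$ being universal by the volume estimates \eqref{vol_sect}. Everything else is the routine affine-change-of-variables dictionary for the Monge--Amp\`ere and linearized Monge--Amp\`ere equations already set down in the rescaling paragraphs above, so no new ideas are needed.
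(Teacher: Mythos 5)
Your argument is exactly the paper's: the lemma is obtained by normalizing $S_u(x_0,h)$ via John's lemma, rescaling $u,v,A,\bb,f$ as in \eqref{tildeAA}--\eqref{bcf}, using $h\le c_\ast$ together with \eqref{tbLn}--\eqref{tfLn} and $p>p_\ast$ to make the rescaled $L^n$ norms of $\tilde\bb$ and $\tilde f^{+}$ fall below $\e_3$, and then applying Proposition \ref{decay_rem}, the measure hypothesis and conclusion transferring under the affine map since it scales volumes by the constant factor $|\det A_h|$. Your bookkeeping of the dependence of $\e_4$ on $p,n,R,\lambda,\Lambda,\tl,\tL$ matches the paper's, so the proposal is correct and essentially identical to the intended proof.
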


 By \cite[Lemma  2.14]{LeCCM}, there exists a universal constant $\hat K(n,\lambda,\Lambda)>1$ with the following property:
 \begin{center}
If $S_u(x, t)\subset S_u(y, h)$ where $S_u(y, \hat K h)\subset\subset \Omega$ then $S_u(x, 4t)\subset S_u(y, \hat K h).$ 
\end{center}
We have the following result on the decay estimate of the distribution function (or $L^\e$ estimate)  of supersolutions.
\begin{thm}[Decay estimate of the distribution function of supersolutions with small $L^p$ drift; see Theorem 5.3 in \cite{LeCCM}] \label{decay_thm} 
 Assume that (\ref{pinch1}) and (\ref{AlamU}) are satisfied in $\Omega\subset\R^n$. Let $p>p_\ast$ where $p_\ast$ is as in (\ref{paeq}). 
 Let $\e_4$ and $c_\ast$ be as in Lemma \ref{res_lem}.
 Suppose that $v\geq 0$ is a $W^{2,n}_{\mathrm{loc}}(\Omega)$ solution of $$a^{ij}D_{ij} v + \bb \cdot Dv \leq f$$  in a section $S_4=S_u(0, 4 t_0)\subset\subset\Omega$ with
 $S:= S_u(0, \hat K t_0))\subset S_u(\bar x, \bar t/2)$  where $S_u(\bar x, \bar t)\subset\subset\Omega$ is an $R$-normalized section, and 
 \begin{equation*}\|\bb\|_{L^p(S)} + \|f^{+}\|_{L^n(S)}\leq \e_4.
 \end{equation*}
Suppose that
 $$\inf_{S_u(0, t_0)} v\leq 1.$$
Then there are universal constants $C_1(n,\lambda,\Lambda,\tl,\tL, R)>1$  and $\e(n,\lambda,\Lambda,\tl,\tL, R)\in (0, 1)$ such that
\[ |\{ v > t \} \cap S_u(0, t_0) | \leq C_1 t^{-\eps} |S_u(0, t_0)|~\text{for all~} t>0.\]
\end{thm}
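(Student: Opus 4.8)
The statement is the $L^\e$ (weak Harnack) estimate for supersolutions, and the plan is to deduce it from the critical density estimate Lemma \ref{res_lem} by the Caffarelli--Guti\'errez iteration, exactly as in \cite[Theorem 5.3]{LeCCM} and its prototype \cite{CG}. The only ingredients are the geometry of sections recorded above --- the volume estimates \eqref{vol_sect} (doubling), the interior $C^{1,\alpha_\ast}$ estimate \eqref{uC1aa}, and the engulfing property stated just before the theorem --- together with Lemma \ref{res_lem}.

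Fix the universal constants $M>1$ and $\delta_\ast\in(0,1)$ of Proposition \ref{decay_rem} (hence of Lemma \ref{res_lem}), and set $D_k:=\{v>M^k\}\cap S_u(0,t_0)$ for integers $k\ge0$. Since $v\in W^{2,n}_{\mathrm{loc}}$ it is continuous, so the hypothesis $\inf_{S_u(0,t_0)}v\le1$ is attained on $\overline{S_u(0,t_0)}$. The base case $|D_1|\le(1-\delta_\ast)|S_u(0,t_0)|$ is then immediate from Lemma \ref{res_lem}: applying it with $x_0=0$, $h=4t_0$, $k=0$ is legitimate because $S_u(0,4t_0)\subset S_u(0,\hat Kt_0)\subset S_u(\bar x,\bar t/2)$ (note $\hat K\ge4$, since the engulfing property with $x=y$, $t=h$ gives $S_u(y,4h)\subset S_u(y,\hat Kh)$) and because passing to a subsection only decreases $\|\bb\|_{L^p}+\|f^+\|_{L^n}$, so these stay $\le\e_4$; its contrapositive then gives the bound, for otherwise $v>1$ everywhere on $S_u(0,t_0)$, contradicting $\inf v\le1$. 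Since $D_{k+1}\subset D_1$, this also yields $|D_{k+1}|\le(1-\delta_\ast)|S_u(0,t_0)|$ for every $k\ge0$.

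The core is the iterative inequality: there is a universal $\mu\in(0,1)$ with $|D_{k+1}|\le(1-\mu)|D_k|$ for all $k\ge0$. This is where the Calder\'on--Zygmund decomposition over sections enters. Using the Monge--Amp\`ere dyadic subdivision of the root section $S_u(0,t_0)$ of Caffarelli--Guti\'errez, a stopping-time argument produces a countable family of essentially disjoint sections, nested in $S_u(0,t_0)$ and of small height, that are maximal for the density condition at level $1-\delta_\ast$ with respect to $D_{k+1}$; by Lebesgue differentiation for the Vitali family of sections (the sections of $u$ form a space of homogeneous type, by \eqref{vol_sect} and engulfing) these cover $D_{k+1}$ up to a null set. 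On each such section $S$ the critical density estimate Lemma \ref{res_lem} applies --- with $S$ playing the role of the inner quarter-section, its relevant outer section is the four-fold dilate, which by the engulfing property stays inside $S_u(0,\hat Kt_0)\subset S_u(\bar x,\bar t/2)$, so that $\|\bb\|_{L^p}+\|f^+\|_{L^n}$ on it remains $\le\e_4$ --- and it forces $v>M^k$ throughout $S$, i.e.\ $S\subset D_k$. Feeding this inclusion into the Caffarelli--Guti\'errez covering lemma (whose validity rests on the doubling estimate \eqref{vol_sect}) and using the base-case bound $|D_{k+1}|\le(1-\delta_\ast)|S_u(0,t_0)|$ as the required smallness hypothesis, one obtains $|D_{k+1}|\le(1-\mu)|D_k|$; iterating, $|D_k|\le(1-\mu)^k|S_u(0,t_0)|$ for all $k\ge0$. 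This covering/iteration argument --- and, most delicately, keeping every application of Lemma \ref{res_lem} legitimate by means of the hypothesis $S_u(0,\hat Kt_0)\subset S_u(\bar x,\bar t/2)$ while the sections fail to tile the domain --- is the main point; but it is exactly the Caffarelli--Guti\'errez machinery and can be carried over essentially verbatim from \cite[Theorem 5.3]{LeCCM}.

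Finally, geometric decay in $k$ converts to power decay in $t$ by a routine estimate. Set $\e:=-\log(1-\mu)/\log M$, which lies in $(0,1)$ since $\mu$ is small (shrink $\e$ otherwise). For $t\ge1$, pick the integer $k\ge0$ with $M^k\le t<M^{k+1}$; then $\{v>t\}\cap S_u(0,t_0)\subset D_k$, and since $M^k>t/M$ we get
\[|\{v>t\}\cap S_u(0,t_0)|\le(1-\mu)^k|S_u(0,t_0)|=(M^k)^{-\e}|S_u(0,t_0)|\le M^{\e}\,t^{-\e}\,|S_u(0,t_0)|.\]
For $0<t<1$ the bound $|\{v>t\}\cap S_u(0,t_0)|\le|S_u(0,t_0)|\le t^{-\e}|S_u(0,t_0)|$ is trivial. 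Hence the asserted estimate holds with $C_1:=M^{\e}>1$, and $C_1,\e$ depend only on $n,\lambda,\Lambda,\tl,\tL,R$, as required.
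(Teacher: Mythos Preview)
The paper does not give its own proof of this theorem: Section~\ref{crit_sect} explicitly \emph{recalls} Proposition~\ref{decay_rem}, Lemma~\ref{res_lem}, and Theorem~\ref{decay_thm} from \cite{LeCCM} as background, and Theorem~\ref{decay_thm} is stated with the attribution ``see Theorem~5.3 in \cite{LeCCM}'' and no proof. Your sketch is correct and is precisely the Caffarelli--Guti\'errez iteration of \cite{CG}, adapted to the drift setting as in \cite{LeCCM}: critical density (Lemma~\ref{res_lem}) plus the Monge--Amp\`ere Calder\'on--Zygmund covering yields geometric decay of $|D_k|$, which converts to the $t^{-\e}$ bound. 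Your observation that $\hat K\ge 4$ (so that the base-case application of Lemma~\ref{res_lem} is legitimate) and your care in keeping every dilated section inside $S_u(0,\hat K t_0)\subset S_u(\bar x,\bar t/2)$ are exactly the points one must check, and you have them right.
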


\subsection{Harnack inequality for singular linearized Monge--Amp\`ere equation with small drifts under twisted conditions}
\label{twisted_sect}
Let $M>1$ be the constant in Proposition \ref{decay_rem}. Our main result in this section is the following Harnack inequality  under twisted conditions.

\begin{thm} [Harnack inequality for singular linearized Monge--Amp\`ere type equation with small drifts  under twisted conditions]
\label{bsmall_CG}Assume that (\ref{pinch1}) and (\ref{AlamU}) are satisfied in $\Omega$. 
Suppose that $v\geq 0$ is a $W^{2, n}_{\mathrm{loc}}(\Omega)$ function satisfying the following conditions  in 
a section \[S:=S_u (x_\ast, h)\subset S_u(\bar x, \bar t/2)\] where $S_u(\bar x, \bar t)\subset\subset\Omega$ is an $R$-normalized section:
\begin{enumerate}
\item[(i)] It is a supersolution:
\begin{equation}
\label{sup_h}
a^{ij} D_{ij} v+ \bb\cdot Dv \leq |f|\quad\text{in } S\end{equation}
where 
$f\in L^n (S)$, and $\bb\in L^p(S;\R^n)$ 
with $p>p_\ast$ where $p_\ast$ is as in (\ref{paeq}). 
\item[(ii)] It is a subsolution up to positive multiplicative continuous corrections. Precisely, for each $z\in S$, there are functions $\bb^z\in L^p(S;\R^n)$, $f^z\in L^n(S)$,  $G^z: S\rightarrow [1, \infty)$ such that
\begin{equation}\label{GzM}G^z(z)=1, \quad G^z\in W^{2, n}_{\mathrm{loc}}(S), \quad  \sup_{x\in S_u(x_\ast, h)}|G^z(x) -G^z(z)|\leq \frac{1}{3M+1},\end{equation} and
 \[\eta^z(x)= v(x) G^z(x)\] satisfies
\begin{equation}\label{sub_hz}a^{ij} D_{ij} \eta^z+ \bb^z\cdot D\eta^z \geq -|f^z| \quad\text{in } S.\end{equation}
\end{enumerate}
There exists a universal constant $\e_5(n,p,\lambda,\Lambda,\tl,\tL, R)>0$ with the following property.  
If 
$$\|\bb\|_{L^p(S)}\leq \e_5/2,\quad\sup_{z\in S}\|\bb^z\|_{L^p(S)}\leq \e_5/2$$
then
\begin{equation}
 \label{HI3}
\sup_{S_{u}(x_\ast, h/8)} v\leq C(n, \lambda, \Lambda,\tl,\tL, R) \Big(\inf_{S_{u}(x_\ast, h/8)} v+ h^{1/2}\|f\|_{L^n(S)}+ h^{1/2}\sup_{z\in S}\|f^z\|_{L^n(S)}\Big).
\end{equation}
\end{thm}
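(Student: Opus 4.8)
The plan is to follow the standard Caffarelli--Guti\'errez oscillation-decay strategy, adapted to the twisted setting exactly as in \cite[Theorem 1.1]{LeCCM}. After the usual normalization (subtract a supporting affine function so that $u|_{\p S}=0$ and $u$ attains its minimum $-h$ at $x_\ast$, and, using the volume estimates \eqref{vol_sect} and \eqref{cadefn}, note $h\le c_\ast$), the first step is to reduce the right-hand sides to zero. Set $\bar f:=\|f\|_{L^n(S)}$, $\bar F:=\sup_{z\in S}\|f^z\|_{L^n(S)}$, and replace $v$ by $\tilde v:=v+ C_\ast h^{1/2}(\bar f+\bar F)$ for a suitable universal $C_\ast$; because the zeroth-order term is absent from the operator, $\tilde v$ is still a supersolution of $a^{ij}D_{ij}\tilde v+\bb\cdot D\tilde v\le |f|$, and one checks that $\eta^z+C_\ast h^{1/2}(\bar f+\bar F)G^z$ still satisfies \eqref{sub_hz} up to harmless changes in $\bb^z,f^z$ (here one uses $G^z\ge 1$ and the uniform H\"older bound on $G^z$). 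It then suffices to prove \eqref{HI3} with the $f$-terms absent, i.e. that $\sup_{S_u(x_\ast,h/8)}v\le C\inf_{S_u(x_\ast,h/8)}v$ under the homogeneous hypotheses, provided the $f^z$ are small enough that the rescaled $L^n$ norms stay below $\e_4$ — which follows from \eqref{tfLn} and $h\le c_\ast$.

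Next, normalize so that $\inf_{S_u(x_\ast,h/8)}v\le 1$ (multiply $v$ by a positive constant; this does not change the $G^z$ or the hypotheses). Applying Theorem \ref{decay_thm} (the $L^\e$ estimate) to the supersolution $v$ on the section $S_u(x_\ast,h/4)$ — whose admissibility is guaranteed by $S\subset S_u(\bar x,\bar t/2)$ and the constant $\hat K$ — yields a universal $\e>0$ and $C_1$ with
\[
|\{v>t\}\cap S_u(x_\ast,h/4)|\le C_1 t^{-\e}|S_u(x_\ast,h/4)|\qquad\text{for all }t>0.
\]
The core of the argument is then a contradiction: suppose $\sup_{S_u(x_\ast,h/8)}v> L$ for a large universal $L$ to be chosen, so there is $\hat x\in S_u(x_\ast,h/8)$ with $v(\hat x)=L'\ge L$. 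Using the twisted condition at the point $z=\hat x$, the function $\eta^{\hat x}=vG^{\hat x}$ is a subsolution of $a^{ij}D_{ij}\cdot+\bb^{\hat x}\cdot D\cdot\ge -|f^{\hat x}|$ on $S$; equivalently, $\Psi:=C_2-c_2\,\eta^{\hat x}$ is a \emph{nonnegative} supersolution on a slightly smaller section (for appropriate universal $C_2,c_2$ determined by the growth of $\eta^{\hat x}$, which is controlled since $G^{\hat x}\in[1,1+\tfrac1{3M+1}]$ on $S$ by \eqref{GzM} and since $v$ is bounded above on the relevant subsection via the $L^\e$ estimate plus a covering/interior-pointwise-bound argument). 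Because $\Psi(\hat x)$ is small (as $v(\hat x)$ is large), the critical density estimate Lemma \ref{res_lem}, applied iteratively to $\Psi$ on $S_u(\hat x,\cdot)$ inside $S_u(x_\ast,h/4)$ — note $\|\bb^{\hat x}\|_{L^p(S)}\le\e_5/2\le\e_4$ after absorbing rescaling losses — forces $\Psi$ to be small, hence $\eta^{\hat x}$ and therefore $v$ (using $G^{\hat x}\le 1+\tfrac1{3M+1}$) to be \emph{large}, on a subset of $S_u(x_\ast,h/4)$ of measure at least a fixed universal fraction of $|S_u(x_\ast,h/4)|$. Quantitatively $v>c\,L'$ there for a universal $c>0$. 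Choosing $L$ large enough that $C_1(cL)^{-\e}<$ that fixed fraction contradicts the $L^\e$ estimate above. Hence $\sup_{S_u(x_\ast,h/8)}v\le L$, which is the desired Harnack inequality; undoing the two normalizations restores the $h^{1/2}(\bar f+\bar F)$ terms and the constant in \eqref{HI3}.

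The step I expect to be the main obstacle is the one where the twisted subsolution $\eta^{\hat x}$ is turned into a bona fide nonnegative supersolution to which Lemma \ref{res_lem} applies. Two things must be reconciled: first, one needs an a priori pointwise upper bound for $v$ on the intermediate section in order to know that $C_2-c_2\eta^{\hat x}\ge 0$ there, and this bound itself has to come from the $L^\e$ estimate combined with an interior pointwise estimate (so the logic must be arranged so that this is not circular — one proves the upper bound on $S_u(x_\ast,h/16)$ first using only the $L^\e$ decay, then runs the contradiction on $S_u(x_\ast,h/8)$). Second, the smallness condition \eqref{GzM}, $\sup_S|G^{\hat x}-G^{\hat x}(\hat x)|\le \tfrac{1}{3M+1}$, is exactly what is needed to guarantee that multiplying $v$ by $G^{\hat x}$ changes $v$ by less than a factor $M$-competing amount, so that the density threshold $\delta_\ast$ and the level $M^k$ in Lemma \ref{res_lem} propagate correctly through one iteration step; getting these constants to line up (and verifying that passing from \eqref{bbzinfty} to the $L^q$/$L^p$ version \eqref{bbzq} only costs the harmless powers of $h$ recorded in \eqref{tbLn}) is the delicate bookkeeping. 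Once Theorem \ref{bsmall_CG} is in hand, Theorem \ref{HI_thm} follows by a standard chaining/covering argument reducing the general bounded drift to the small-drift case after passing to sufficiently small sections (shrinking $h$ so that $\|\bb\|_{L^q(S)}$, $\sup_z\|\bb^z\|_{L^q(S)}$ become $\le\e_5/2$ via \eqref{tbLp}), together with the fact that a Harnack inequality on all small sections implies local H\"older continuity.
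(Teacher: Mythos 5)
Your overall strategy is the same as the paper's (rescale to an $n$-normalized section, use the $L^\eps$ decay estimate of Theorem \ref{decay_thm} for the supersolution $v$, then use the twisted subsolution $\eta^{\hat x}=vG^{\hat x}$ through the critical density Lemma \ref{res_lem} to show that a large value of $v$ at one point forces $v$ to be large on a set of definite measure, contradicting the decay; finally deduce Theorem \ref{HI_thm} by shrinking $h$ so that the rescaled drifts and the oscillation of $G^z$ become small). However, the step you yourself flag as the main obstacle is a genuine gap, and your proposed fix does not close it. To apply Lemma \ref{res_lem} to $\Psi=C_2-c_2\,\eta^{\hat x}$ you must know that $\Psi\geq 0$ on the relevant subsection with \emph{universal} constants $C_2,c_2$ tied to $v(\hat x)=L'$, i.e.\ you need a pointwise upper bound $v\leq C L'$ near $\hat x$ before you have proved any sup bound. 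The $L^\eps$ estimate alone cannot produce such a pointwise bound for a function that is only a supersolution (it controls measure, not the maximum), and "prove the upper bound on $S_u(x_\ast,h/16)$ first using only the $L^\eps$ decay" is precisely the sup half of the Harnack inequality you are trying to prove, so the argument as sketched is circular. The paper resolves this with the Imbert--Silvestre barrier: in the normalized picture one takes $h_t(x)=t(1-\tilde u(x))^{-\beta}$ and the \emph{minimal} $t$ with $h_t\geq\tilde v$; at the touching point $x_0$ the barrier itself supplies the missing local upper bound $\tilde v\leq(1-\rho)^{-\beta}H_0$ on $S_{\tilde u}(x_0,c_1r^{p_1})$, which is exactly what makes the auxiliary function $w$ in (\ref{wC1C2}) a nonnegative supersolution with $w(x_0)\leq 1$, and the choice of $\beta$ in (\ref{beta_choice}) together with the oscillation bound $1/(3M+1)$ on $G^{Tx_0}$ makes the level-$M$ conclusion of Lemma \ref{res_lem} translate back into $\tilde v\geq H_0/2$. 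Without this (or an equivalent device such as the classical Caffarelli--Guti\'errez doubling/covering iteration carried out in full), your contradiction step is not established.

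A second, smaller defect: your preliminary reduction "replace $v$ by $v+C_\ast h^{1/2}(\bar f+\bar F)$ so that it suffices to treat zero right-hand side" does not work. Since the operator has no zeroth-order term, adding a constant leaves the inequality $a^{ij}D_{ij}v+\bb\cdot Dv\leq|f|$ unchanged, so the $f$-term is not removed; and the claim that $(v+c)G^z=\eta^z+cG^z$ still satisfies (\ref{sub_hz}) "up to harmless changes" would require a bound on $a^{ij}D_{ij}G^z+\bb^z\cdot DG^z$, which the hypotheses (\ref{GzM}) do not provide --- only an oscillation/H\"older bound on $G^z$ is assumed. The correct normalization is multiplicative, as in the paper's Step 2: divide $v$ by $K(\tau)=\inf_{S_{\tilde u}(0,1/2)}\tilde v+\tau+16M(\|\tilde f\|_{L^n(\tilde S)}+\sup_z\|\tilde f^z\|_{L^n(\tilde S)})/\e_4$, which commutes with the twist (it rescales $\eta^z$ and $f^z$ by the same constant and leaves $G^z$ untouched), keeps the rescaled data below the smallness threshold $\e_4$, and yields (\ref{HI3}) after letting $\tau\to 0$.
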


\begin{rem} In (\ref{GzM}), we only need the small oscillation of $G^z$ around $z$, and no continuity properties are required.
\end{rem}

\begin{proof}[Proof of Theorem \ref{bsmall_CG}] 
Let $\delta_\ast\in (0, 1)$  be the constant in Proposition \ref{decay_rem} and $\e\in (0, 1)$ be the constant in Theorem \ref{decay_thm}. Let $c_\ast$ be as (\ref{cadefn}), and
let $\e_4$ be as in Lemma \ref{res_lem}. We choose $\e_5$ so that $$Cc_\ast^{\frac{\alpha_\ast}{1+\alpha_\ast}-\frac{n}{2p}}\e_5\leq \frac{\e_4}{16 M}$$
where $C$ is the universal constant appearing in (\ref{tbLp})- (\ref{tfLn}).
We rescale (\ref{sup_h}), (\ref{sub_hz}), the domain, and functions as in Section \ref{crit_sect}. In particular, for $x\in T^{-1}(S_u(x_\ast, h))$, we have
\[\tilde u(x) =(\det A_h)^{-2/n} u(Tx), \quad \tilde v(x) = v(Tx),\quad\text{and } \tilde G^{z}(x)= G^z(Tx).\]
The corresponding matrix $\tilde A=(\tilde a^{ij})$ is given by (\ref{tildeAA}), and $\tilde \bb$, $\tilde \bb^z$, $\tilde f$ and $\tilde f^z$ are given by (\ref{bcf}).

By (\ref{tbLp}), the functions $\tilde \bb$, $\tilde \bb^z$
satisfy on the {\it $n$-normalized} section \[\tilde S= T^{-1}(S_u(x_\ast, h))= S_{\tilde u}(y, 4t_0)\] (see (\ref{normSh})) the bounds
\begin{equation}\|\tilde \bb\|_{L^p(\tilde S)} \leq Cc_\ast^{\frac{\alpha_\ast}{1+\alpha_\ast}-\frac{n}{2p}}\e_5\leq \frac{\e_4}{16 M}, \quad \sup_{z\in S}\|\tilde \bb^z\|_{L^p(\tilde S)} \leq Cc_\ast^{\frac{\alpha_\ast}{1+\alpha_\ast}-\frac{n}{2p}}\e_5\leq \frac{\e_4}{16 M}.
 \label{bcsmall}
\end{equation}
From (\ref{sup_h}), we have the supersolution property
\[\tilde a^{ij} D_{ij}\tilde v + \tilde \bb\cdot D\tilde v\leq| \tilde f|.\]
For each $z\in S=T(\tilde S)$, we have from (\ref{sub_hz}) the following subsolution property
\[\tilde a^{ij} D_{ij}\tilde \eta^{z} + \tilde \bb^z\cdot D\tilde \eta^{z}\geq -| \tilde f^{z}|.\]
We also know that
\begin{equation}\label{tGz}\tilde G^z (T^{-1} z) =1,\quad \quad  \sup_{x\in S_u(x_\ast, h)}|\tilde G^z(T^{-1}x) -\tilde G^z(T^{-1}z)|\leq \frac{1}{3M+1}.\end{equation}
We need to show that
\begin{equation}\sup_{S_{\tilde u}(y, t_0/2)} \tilde v\leq C(n, \lambda, \Lambda,\tl,\tL) \left(\inf_{S_{\tilde u}(y, t_0/2)} \tilde v+ \|\tilde f\|_{L^n(\tilde S)}+ \sup_{z\in S}\|\tilde f^z\|_{L^n(\tilde S)}\right).
 \label{HI4}
\end{equation}
Since $S_{\tilde u}(y, 4t_0)$ is $n$-normalized, and (\ref{detD2tilde}) holds, we can use the volume estimate (\ref{vol_sect}) to obtain
\[C^{-1}(n,\lambda,\Lambda) \leq t_0\leq C(n,\lambda,\Lambda).\]
Without loss of generality, we can assume that $t_0=1$ and $y=0$. 
By changing coordinates and subtracting an affine function from $\tilde u$,
we can assume that \[\tilde u\geq 0, \quad \tilde u(0)=0, \quad D\tilde u(0)=0.\]
We divide the proof into several steps.\\
{\it Step 1.} We show that,  if \begin{equation}\label{small1}\inf_{S_{\tilde u}(0, 1/2)} \tilde v \leq 1, \quad \text{ and}~ \|\tilde f\|_{L^n(\tilde S)}+ \sup_{z\in S}\|\tilde f^z\|_{L^n(\tilde S)}\leq \frac{\e_4}{16 M},\end{equation} then for some universal constant $C$, we have \begin{equation}
\label{tvC}
\sup_{S_{\tilde u}(0, 1/2)} \tilde v \leq C.\end{equation}

Our proof of (\ref{tvC}) follows the lines of argument in Imbert-Silvestre \cite{IS} in the case $\tilde u(x)=|x|^2/2$.
Let $\beta >0$ be a universal constant to be determined later and let \[h_t(x) = t(1-\tilde u(x))^{-\beta}\quad\text{for } x\in S_{\tilde u}(0, 1).\] 
We consider the minimum value of $t>0$ such that
$h_t \geq \tilde v$ in $S_{\tilde u}(0, 1)$. It suffices to show that $t$ is bounded from above by a universal constant $C$, because we have then
$$\sup_{S_{\tilde u}(0, 1/2)} \tilde v\leq C \sup_{S_{\tilde u}(0, 1/2)} (1- \tilde u)^{-\beta}\leq 2^\beta C.$$
If $t \le 1$, then we are
done. Hence, it remains to prove (\ref{tvC}) for the case $t \geq 1$. 

Since $t$ is chosen to be the minimum value such that $h_t \geq
\tilde v$, there is $x_0 \in S_{\tilde u}(0, 1)$ such that $h_t(x_0) =
\tilde v(x_0)$. Let \[r = (1-\tilde u(x_0))/2,\quad\text{and }H_0 := h_t(x_0) = t(2r)^{-\beta}
\geq 1.\] By the inclusion and exclusion property of sections (see \cite[Theorem 3.3.10]{G}, and \cite[Theorem 3.57]{LMT}),
there is a small constant $\hat c$ and large constant $p_1$ depending on $n,\lambda,\Lambda$ such that 
\begin{equation}
 \label{sectx0}
S_{\tilde u}(x_0, \hat K \hat c r^{p_1})\subset S_{\tilde u}(0, 1)
\end{equation}
where $\hat K$ is defined right before the statement of Theorem
\ref{decay_thm}.

We bound $t$ from above by estimating the measure of the set $\{\tilde v \geq H_0/2\} \cap
S_{\tilde u}(x_0, \hat c r^{p_1})$ from above and below. 

The estimate from above can be done using Theorem
\ref{decay_thm}. First, recalling $\tilde S= S_{\tilde u}(0, 4)$, we find from (\ref{bcsmall}), (\ref{small1}) and (\ref{sectx0})
that
$$\|\tilde \bb\|_{L^p(S_{\tilde u}(x_0, \hat K \hat c r^{p_1}))} + \|\tilde f\|_{L^n(S_{\tilde u}(x_0, \hat K \hat c r^{p_1}))}\leq \e_4.$$
Then, Theorem \ref{decay_thm} gives
\begin{equation} \label{up_H}  
|\{\tilde v>H_0/2\} \cap S_{\tilde u}(x_0, \hat c r^{p_1})|\leq
CH_0^{-\eps}|S_{\tilde u}(x_0, \hat c r^{p_1})|\leq C H_0^{-\eps}|S_{\tilde u}(0, 1)| \leq C t^{-\eps} (2r)^{\beta \eps}.
\end{equation}

Now, we estimate the measure of $\{\tilde v \geq H_0/2\} \cap
S_{\tilde u}(x_0, \hat c r^{p_1})$ from below. To do this, we modify the proof of Theorem 6.1 on pp. 35--37 in \cite{LeCCM}.
The only part that is different starts right after inequality (6.7). Here, $C_1-C_2 \tilde v$ is not a supersolution of the operator $\tilde a^{ij} D_{ij} + \tilde \bb\cdot D$. However, \[C_1-C_2 \tilde v\tilde G^{Tx_0}=C_1-C_2 \tilde \eta^{Tx_0}\] is a supersolution of
the operator $\tilde a^{ij}D_{ij}  + \tilde \bb^{Tx_0} \cdot D$; see (\ref{weq}).

We will appropriately choose $C_1$ and $C_2$ (see (\ref{wC1C2})), and then applying Lemma \ref{res_lem} to $C_1-C_2 \tilde v\tilde G^{Tx_0}$ on a small but definite fraction of the section $S_{\tilde u}(x_0, \hat c r^{p_1})$. For this, we introduce several new constants. 
Denote
\[\delta:= \frac{1}{3M+1}<\frac{1}{3M}
\quad\text{so }1+\delta <4/3.\]
Let $\rho$ be a small universal
constant, and let $\beta$ be a large universal constant such that
\begin{equation}\label{beta_choice}
 M \left( (1+\delta)(1-\rho)^{-\beta} - 1 \right)  =
 \frac{1}{3};~
\beta  \geq \frac{np_1}{2\e}.
\end{equation}
As in \cite[p. 36]{LeCCM}, we deduce from the interior $C^{1,\alpha_\ast}$ estimate (\ref{uC1aa}) and the gradient estimate for $u$ that
 \[1- \tilde {u}(x)\geq 2r-2\rho r\quad  \text{in the section }S_{\tilde u}(x_0, c_1 r^{p_1}) \] if $c_1$
is universally small. 

Note that \[\tilde v(x_0)= H_0\geq 1.\]
The maximum of $\tilde v$ in the section $S_{\tilde u}(x_0, c_1 r^{p_1})$ is at most the maximum
of $h_t$ in $S_{\tilde u}(x_0, c_1 r^{p_1})$ which is not greater than $t(2r-2\rho r)^{-\beta} =
(1-\rho)^{-\beta} H_0$. Thus
\[\tilde v\leq (1-\rho^{-\beta}) H_0\quad\text{in } S_{\tilde u}(x_0, c_1 r^{p_1}).\]
By (\ref{tGz}), we have \[G_m:= \max_{S_{\tilde u}(x_0, c_1 r^{p_1})} \tilde G^{Tx_0}\leq 1+\delta.\]
Define the 
following functions for $x\in S_{\tilde u}(x_0, c_1 r^{p_1}) $

\begin{equation} \label{wC1C2} w(x) = \frac{(1-\rho)^{-\beta} H_0 G_m
  -\tilde v(x)\tilde G^{Tx_0}}{\left((1+\delta)(1-\rho)^{-\beta} - 1
  \right) H_0},~\text{and }\bar f =\frac{|\tilde f^{Tx_0}|}{\left((1+\delta)(1-\rho)^{-\beta} - 1
  \right) H_0}. 
  \end{equation}
   Note that $w(x_0) \leq 1$, and $w$ is a non-negative solution of 
\begin{equation} \label{weq}\tilde a^{ij}D_{ij} w + \tilde \bb^{Tx_0} \cdot Dw \leq  \bar f~\text{in}~S_u(x_0, c_1 r^{p_1}). 
\end{equation}
Observe that, by (\ref{beta_choice}), (\ref{bcsmall}) and the assumption on $\tilde f^{Tx_0}$ in (\ref{small1}),
\begin{equation*}
 \|\bar f\|_{L^n(\tilde S)}\leq  \frac{1}{\left((1+\delta)(1-\rho)^{-\beta} - 1
  \right) } \|\tilde f^{Tx_0}\|_{L^n(\tilde S)}
  =  3M \|\tilde f^{Tx_0}\|_{L^n(\tilde S)} \leq \e_4/2.
\end{equation*}
Therefore,
\begin{equation}\|\tilde \bb^{Tx_0}\|_{L^p(S_{\tilde u}(x_0, c_1 r^{p_1}))}+ \|\bar f\|_{L^n(S_{\tilde u}(x_0, c_1 r^{p_1}))}\leq \e_4.
 \label{bcbarf}
\end{equation}
From (\ref{weq}) and (\ref{bcbarf}), we can use Lemma \ref{res_lem} to obtain the estimate
\begin{equation} |\{w \leq M\} \cap S_{\tilde u}(x_0, 1/4 c_1 r^{p_1})| \geq \delta_\ast |S_{\tilde u}(x_0, 1/4 c_1 r^{p_1})|. 
 \label{finalest}
\end{equation}
We have that
\[\frac{(1-\rho)^{-\beta} H_0 G_m
  -\tilde v(x)\tilde G^{Tx_0}}{\left((1+\delta)(1-\rho)^{-\beta} - 1
  \right) H_0} \leq M\]
  is equivalent to 
  \[\tilde v(x) \tilde G^{Tx_0}(x) \geq H_0 \Big[ (1-\rho)^{-\beta} G_m -M[(1+\delta)(1-\rho)^{-\beta} - 1]\Big]=H_0 \Big[ (1-\rho)^{-\beta} G_m-\frac{1}{3}\Big], \]
  so
  \[\tilde v(x) \geq \frac{H_0}{1+\delta} ((1-\rho)^{-\beta} -\frac{1}{3})\geq \frac{H_0}{2}.\]
 Thus, we obtain from (\ref{finalest}) the estimate
\[ |\{\tilde v \geq H_0/2\} \cap S_{\tilde u}(x_0, c_1 r^{p_1})| \geq \delta_\ast |S_{\tilde u}(x_0, c_1 r^{p_1})|.\]
 In view of  \eqref{up_H}, and the volume estimate on sections in (\ref{vol_sect}) (recalling that $\tilde u$ satisfies (\ref{detD2tilde})), we find
$$C t^{-\eps} (2r)^{\beta \eps}\geq  \delta_\ast |S_{\tilde u}(x_0, c_1 r^{p_1})|\geq c_3(n,\lambda,\lambda) r^{np_1/2},$$
for some universally small $c_3$.
By the choice of $\beta$ in (\ref{beta_choice}), and recalling $0<r<1$, we find that $t$ is universally bounded from above by $2^{\beta} (C/c_3)^{\frac{1}{\e}}$. This completes the proof of {\it Step 1}.

{\it Step 2: Proof of the Harnack inequality.}
For each $\tau>0$, let
\[K(\tau) =\inf_{S_{\tilde u}(0, 1/2)} \tilde v  +\tau + 16M ( \|\tilde f\|_{L^n(\tilde S)}+ \sup_{z\in S}\|\tilde f^z\|_{L^n(\tilde S)})/\e_4 \]
and
consider the functions $$\tilde v^{\tau}= \frac{\tilde v}{K(\tau)},\quad 
\text{and }
\tilde f^{\tau}=  \frac{\tilde f}{K(\tau)}.$$
Then, we have the supersolution property
\[\tilde a^{ij} D_{ij}\tilde v^{\tau} + \tilde \bb\cdot D\tilde v^{\tau}\leq| \tilde f^{\tau}|,\]
where
$$\|\tilde b\|_{L^p(\tilde S)} \leq \frac{\e_4}{16M}, \|\tilde f^{\tau}\|_{L^n(\tilde S)}\leq \frac{\e_4}{16M}.$$
For each $z\in S= T(\tilde S)$, consider the functions
\[\tilde\eta^{z,\tau}=  \frac{\tilde \eta^z}{K(\tau) }=\frac{\tilde v \tilde G^{z}}{K(\tau) },\quad
\text{and }
\tilde f^{z,\tau}=  \frac{\tilde f^z}{K(\tau) }.\]
Then, we have the subsolution property
\[\tilde a^{ij} D_{ij}\tilde \eta^{z, \tau} + \tilde \bb^z\cdot D\tilde \eta^{z, \tau}\geq -| \tilde f^{z, \tau}|.\]
Moreover,
$$\sup_{z\in S}\|\tilde \bb^z\|_{L^p(\tilde S)} \leq \frac{\e_4}{16M},\quad \sup_{z\in S}\|\tilde f^{z, \tau}\|_{L^n(\tilde S)}\leq \frac{\e_4}{16M}.$$

We apply the conclusion of {\it Step 1} to $\tilde v^{\tau}$ to obtain
$$\sup_{S_{\tilde u}(0,1/2)} \tilde v \leq C \left(\inf_{S_{\tilde u}(0, 1/2)} \tilde v  +\tau + 16M ( \|\tilde f\|_{L^n(\tilde S)}+ \sup_{z\in S}\|\tilde f^z\|_{L^n(\tilde S)})/\e_4\right).$$
Sending $\tau\rightarrow 0$, we get the Harnack inequality (\ref{HI3}), and completing
the proof of the theorem.
\end{proof}

\subsection{Proof of the twisted Harnack inequality}

\begin{proof}[Proof of Theorem \ref{HI_thm}] 
We prove the theorem when  (\ref{bbzinfty}) is replaced by a slightly relaxed condition (\ref{bbzq}). 
Let $M>1$ be the constant in Proposition \ref{decay_rem}. Let $\e_5(n,p,\lambda,\Lambda,\tl,\tL, R)$ be as in Theorem \ref{bsmall_CG}. Using the H\"older inequality and the volume estimates for sections, we have for $S=S_u(x_\ast, h)$
\[\|\bb\|_{L^{p} (S)} \leq \|\bb\|_{L^{q} (S)} |S|^{\frac{1}{p}-\frac{1}{q}}\leq K|C(n,\lambda,\Lambda) h^{n/2}|^{\frac{1}{p}-\frac{1}{q}}. \]
Similarly,
\[\sup_{z\in S} \|\bb^z\|_{L^{p} (S)} \leq K |C(n,\lambda,\Lambda) h^{n/2}|^{\frac{1}{p}-\frac{1}{q}}.\]
Thus, if $h\leq h_1(n,\lambda,\Lambda,\tl, \tL,  R, q,  K)$, then
\[\|\bb\|_{L^{p} (S)} +\sup_{z\in S} \|\bb^z\|_{L^{p} (S)}  \leq \e_5/2. \]
Since $S_u(\bar x,\bar t)$ is $R$-normalized, by the estimate on the size of sections (see \cite[Lemma 3.52]{LMT}), there exist $\mu(n,\lambda,\Lambda)\in (0, 1)$ and $\bar C=\bar C(n,\lambda,\Lambda, R)$ such that whenever $S_u(x_\ast, h)\subset S_u(\bar x,\bar t/2)$, one has
\[S_u(x_\ast, h)\subset B_{\bar C h^{\mu}} (x_\ast).\]
Now we choose $h_0(n,\lambda,\Lambda,\tl, \tL,  R, q,  K, \gamma)\leq h_1$ so that
\[K (\text{diam}(S_u(x_\ast, h_0)))^\gamma \leq \frac{1}{3M+1}.\]
Then for $h\leq h_0$ and for all $z\in S$, we have
\[\sup_{x\in S}|G^z(x) - G^z(z)| \leq K |x-z|^\gamma\leq K (\text{diam}(S_u(x_\ast, h_0)))^\gamma \leq \frac{1}{3M+1}.\]
The Harnack inequality (\ref{HI_v}) now follows from Theorem \ref{bsmall_CG}.

From the Harnack inequality, we easily obtain the interior H\"older regularity of $v$. The proof is similar to that of \cite[Theorem 2.9]{LMT} for linearized Monge--Amp\`ere equation without drifts, so we omit it.
\end{proof}

{\bf Acknowledgements. } The author would like to thank the referee for carefully reading the paper and providing constructive comments that help improve the exposition of the paper.

\end{document}